\renewenvironment{enumerate}{\begin{enumorig}[label=\textup{(\roman*)}, noitemsep, topsep=2pt plus 2pt, labelindent=.2em, leftmargin=*, widest=iii]}{\end{enumorig}}
\renewenvironment{itemize}{\begin{itemorig}[label=\textbullet, noitemsep, topsep=2pt plus 2pt, labelindent=.5em, labelsep=.5em, leftmargin=*]}{\end{itemorig}}
\newtheorem{theorem}{Theorem}
\newtheorem{proposition}[theorem]{Proposition}
\newtheorem{lemma}[theorem]{Lemma}
\newtheorem{claim}[theorem]{Claim}
\newtheorem{corollary}[theorem]{Corollary}
\theoremstyle{definition}
\newtheorem{problem}{Problem}
\let\leq\leqslant
\let\geq\geqslant
\let\setminus\smallsetminus
\def\tbigcup{\textstyle\bigcup\displaystyle}
\def\setN{\mathbb{N}}
\def\setR{\mathbb{R}}
\def\calB{\mathcal{B}}
\def\calC{\mathcal{C}}
\def\calD{\mathcal{D}}
\def\calF{\mathcal{F}}
\def\calG{\mathcal{G}}
\def\calH{\mathcal{H}}
\def\calK{\mathcal{K}}
\def\calM{\mathcal{M}}
\def\calN{\mathcal{N}}
\def\calR{\mathcal{R}}
\def\calS{\mathcal{S}}
\def\calX{\mathcal{X}}
\def\calY{\mathcal{Y}}
\def\calZ{\mathcal{Z}}
\let\int\undefined
\DeclareMathOperator{\base}{base}
\DeclareMathOperator{\int}{int}
\DeclareMathOperator{\ext}{ext}
\DeclareMathOperator{\cut}{cut}
\DeclareMathOperator{\leftclip}{leftclip}
\DeclareMathOperator{\rightclip}{rightclip}
\let\old@setaddresses\@setaddresses
\def\@setaddresses{\bgroup\parindent 0pt\let\scshape\relax\old@setaddresses\egroup}
\title{Coloring intersection graphs of arc-connected sets in the plane}
\author{Micha\l{} Laso\'n}
\author{Piotr Micek}
\author{Arkadiusz Pawlik}
\author{Bartosz Walczak}
\thanks{Journal version of this paper appeared in \emph{Discrete Comput.\ Geom.}, 52(2):399--415, 2014.}
\thanks{M.~Laso\'n, P.~Micek, and A.~Pawlik were supported and B.~Walczak was partially supported by Ministry of Science and Higher Education of Poland grant 884/N-ESF-EuroGIGA/10/2011/0 within ESF EuroGIGA project GraDR\@. B.~Walczak was partially supported by Swiss National Science Foundation grant 200020-144531.}
\address[Micha\l{} Laso\'n]{Theoretical Computer Science Department, Faculty of Mathematics and Computer Science, Jagiellonian University, Krak\'ow, Poland; Institute of Mathematics of the Polish Academy of Sciences, Warsaw, Poland; \'Ecole Polytechnique F\'ed\'erale de Lausanne, Switzerland}
\email{michalason@gmail.com}
\address[Piotr Micek, Arkadiusz Pawlik]{Theoretical Computer Science Department, Faculty of Mathematics and Computer Science, Jagiellonian University, Krak\'ow, Poland}
\email{micek@tcs.uj.edu.pl, pawlik@tcs.uj.edu.pl}
\address[Bartosz Walczak]{Theoretical Computer Science Department, Faculty of Mathematics and Computer Science, Jagiellonian University, Krak\'ow, Poland; \'Ecole Polytechnique F\'ed\'erale de Lausanne, Switzerland}
\email{walczak@tcs.uj.edu.pl}
\begin{document}
\baselineskip 14pt

\begin{abstract}
A family of sets in the plane is simple if the intersection of its any subfamily is arc-connected, and it is pierced by a line $L$ if the intersection of its any member with $L$ is a nonempty segment. 
It is proved that the intersection graphs of simple families of compact arc-connected sets in the plane pierced by a common line have chromatic number bounded by a function of their clique number.
\end{abstract}

\maketitle

\section{Introduction}

A \emph{proper coloring} of a graph is an assignment of colors to the vertices of the graph such that no two adjacent ones are assigned the same color.
The minimum number of colors sufficient to color a graph $G$ properly is called the \emph{chromatic number} of $G$ and denoted by $\chi(G)$.
The maximum size of a clique (a set of pairwise adjacent vertices) in a graph $G$ is called the \emph{clique number} of $G$ and denoted by $\omega(G)$.
It is clear that $\chi(G)\geq\omega(G)$.

The chromatic and clique numbers of a graph can be arbitrarily far apart. 
There are various constructions of graphs that are triangle-free (have clique number $2$) and still have arbitrarily large chromatic number. 
The first one was given in 1949 by Zykov \cite{Zyk49}, and the one perhaps best known is due to Mycielski \cite{Myc55}.
However, these classical constructions require a lot of freedom in connecting vertices by edges, and many important classes of graphs derived from specific (e.g.\ geometric) representations have chromatic number bounded in terms of the clique number.
A class of graphs is called \emph{$\chi$-bounded} if there is a function $f\colon\setN\to\setN$ such that $\chi(G)\leq f(\omega(G))$ holds for any graph $G$ from the class.

In this paper, we focus on the relation between the chromatic number and the clique number for geometric intersection graphs.
The \emph{intersection graph} of a family of sets $\calF$ is the graph with vertex set $\calF$ and edge set consisting of pairs of intersecting elements of $\calF$.
We consider finite families $\calF$ of arc-connected compact sets in the plane which are \emph{simple} in the sense that the intersection of any subfamily of $\calF$ is also arc-connected.
We usually identify the family $\calF$ with its intersection graph and use such terms as chromatic number, clique number or $\chi$-boundedness referring directly to $\calF$.

In the one-dimensional case of subsets of $\setR$, the only arc-connected compact sets are closed intervals.
They define the class of \emph{interval graphs}, which have chromatic number equal to their clique number.
The study of the chromatic number of intersection graphs of geometric objects in higher dimensions was initiated in the seminal paper of Asplund and Gr\"unbaum \cite{AG60}, where they proved that the families of axis-aligned rectangles in $\setR^2$ are $\chi$-bounded.
On the other hand, Burling \cite{Bur65} showed that intersection graphs of axis-aligned boxes in $\setR^3$ with clique number $2$ can have arbitrarily large chromatic number.

Since then, a lot of research focused on proving $\chi$-boundedness of the families of geometric objects in the plane with various restrictions on the kind of objects considered, their positions, or the way they can intersect.
Pach \cite{Pac80} proved that for every $\delta>0$, the families of convex compact sets in the plane that are \emph{$\delta$-fat}, which means that the ratio between the area of the set and the area of its minimum bounding disc is at least $\delta$, are $\chi$-bounded.
Pach's result (with the area replaced by the volume and the disc replaced by the ball) actually holds in any Euclidean space $\setR^d$.
Gy\'arf\'as \cite{Gya85,Gya86} proved that the families of chords of a circle are $\chi$-bounded.
This was generalized by Kostochka and Kratochv\'{\i}l \cite{KK97} to the families of convex polygons inscribed in a circle.
Kim, Kostochka and Nakprasit \cite{KKN04} showed that every family $\calF$ of homothets (uniformly scaled copies) of a fixed convex compact set in the plane satisfies $\chi(\calF)\leq 6\omega(\calF)-6$, while every family $\calF$ of translates of a fixed convex compact set in the plane satisfies $\chi(\calF)\leq 3\omega(\calF)-2$.
Aloupis et al.\ \cite{ACC+09} proved that the families of \emph{pseudo-discs}, that is, closed disc homeomorphs in the plane the boundaries of any two of which cross at most twice, are $\chi$-bounded.
Note that families of convex sets or pseudo-discs are simple.
Fox and Pach \cite{FP10} showed that the intersection graphs of any arc-connected compact sets in the plane that do not contain a fixed bipartite subgraph $H$ have chromatic number bounded by a function of $H$.
The above-mentioned results of \cite{ACC+09,FP10,KKN04,Pac80} are actually stronger---they state that the number of edges of the intersection graph of a respective family $\calF$ is bounded by $f(\omega(\calF)){|\calF|}$ for some function $f$.

A family of sets $\calF$ is \emph{pierced} by a line $L$ if the intersection of any member of $\calF$ with $L$ is a nonempty segment.
McGuinness \cite{McG96} proved that the families of L-shapes (shapes consisting of a horizontal and a vertical segments of arbitrary lengths, forming the letter `L') pierced by a fixed vertical line are $\chi$-bounded.
Later \cite{McG00}, he showed that the triangle-free simple families of compact arc-connected sets in the plane pierced by a common line have bounded chromatic number.
Suk \cite{Suk14} proved $\chi$-boundedness of the simple families of $x$-monotone curves intersecting a fixed vertical line.
In this paper, we generalize the results of McGuinness, allowing any bound on the clique number, and of Suk, removing the $x$-monotonicity condition.

\begin{theorem}\label{thm:intro}
The class of simple families of compact arc-connected sets in the plane pierced by a common line is\/ $\chi$-bounded.
\end{theorem}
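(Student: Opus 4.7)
The proof naturally splits into a preparatory reduction to a single half-plane with pairwise disjoint bases on $L$, and an induction on the clique number inside that half-plane. Translate so that $L$ is the $x$-axis and write $\base(S) = S \cap L$ for each $S \in \calF$; by hypothesis this is a nonempty closed segment. Since any two sets whose bases meet are adjacent, the interval graph on $\{\base(S) : S \in \calF\}$ has clique number at most $\omega(\calF)$, so an $\omega(\calF)$-coloring of it partitions $\calF$ into subfamilies with pairwise disjoint bases; it therefore suffices to $\chi$-bound such subfamilies. For a family with pairwise disjoint bases set $S^+ = S \cap \{y \geq 0\}$ and $S^- = S \cap \{y \leq 0\}$. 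Using that $\base(S)$ is a \emph{single} segment, any arc in $S$ between two points of $S^+$ that dips into $S^-$ can be rerouted along $\base(S)$, so each $S^\pm$ is arc-connected. Moreover, for any subfamily $\calG \subseteq \calF$ the intersection $\bigcap \calG$, being arc-connected by simplicity and disjoint from $L$ as soon as $|\calG|\geq 2$ by base-disjointness, lies entirely in one open half-plane; this both shows that $\{S^\pm : S \in \calF\}$ are still simple and that the intersection graph of $\calF$ is the edge-disjoint union of those of the two half-plane families. Thus it is enough to $\chi$-bound simple families of arc-connected compact sets in the closed upper half-plane whose bases on $L$ are pairwise disjoint.

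For such a family $\calF$ I would induct on $k = \omega(\calF)$, aiming at a bound $\chi(\calF) \leq g(k)$; the case $k=1$ is immediate. Order the sets along $L$ by the left endpoint of the base and let $S_0$ be the leftmost. All other bases lie strictly to the right of $\base(S_0)$, and the sets that actually intersect $S_0$ have, in their open neighborhood of $S_0$, clique number at most $k-1$, hence are $g(k-1)$-colorable by induction. The remaining sets, which are disjoint from $S_0$, can in principle carry a $k$-clique, so the inductive step rests on a Gy\'arf\'as/McGuinness-style shelling: peel $S_0$ together with a ``curtain'' along its right boundary, $g(k-1)$-color what is peeled off, and argue that the peeling reduces the family to a simple family in which the clique number has strictly dropped, so that $O(1)$ shelling rounds suffice and the recursion closes as $g(k) = O(g(k-1))$.

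The hard part will be making the peeling/clipping work for general arc-connected sets. Concretely, I would try to prove a \emph{clipping lemma} of the following form: given $\calF$ as above with distinguished leftmost set $S_0$, there is a curve $\gamma$ inside $S_0 \cup L$ such that every $T \in \calF \setminus \{S_0\}$ decomposes as $T_{\mathrm{left}} \cup T_{\mathrm{right}}$ along $\gamma$, the ``right'' family $\{T_{\mathrm{right}} : T \in \calF\}$ remains a simple family of compact arc-connected sets pierced by a line-like curve $\gamma$, and every clique that previously used $S_0$ now fails to be a clique on the right side. For $x$-monotone curves (Suk) the curve $\gamma$ can be taken to be $S_0$ itself, traversed left-to-right; for arbitrary arc-connected sets one needs a topological substitute --- essentially a ``rightmost reachable arc'' in $S_0 \cup L$ viewed from the right --- and the simplicity hypothesis is exactly what guarantees that after clipping, intersections of subfamilies stay arc-connected, since pairwise and higher intersections with $S_0$ were already arc-connected by simplicity of $\calF$.

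Once the clipping lemma is in place, the induction is mostly bookkeeping: apply the lemma to get a family $\calF'$ of clique number $\leq k-1$ together with a ``shaved off'' part whose intersection graph splits into a bounded number of pieces each contained in the neighborhood of a single set and therefore $g(k-1)$-colorable. Combining with the two factors lost in the initial reduction (one for base-overlap coloring and one for the upper/lower half split) yields the claimed $\chi$-bound $\chi(\calF) \leq f(\omega(\calF))$, proving Theorem~\ref{thm:intro}.
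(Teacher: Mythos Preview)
Your initial reduction to ``grounded'' families in a single half-plane with pairwise disjoint bases is correct and is exactly what the paper does. The gap is in the inductive step.

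Your clipping lemma, as stated, cannot do what you need. You want to peel off the leftmost set $S_0$ together with a curve $\gamma\subset S_0\cup L$ and obtain a clipped family $\calF'$ with $\omega(\calF')\le k-1$. But any $k$-clique in $\calF$ that is disjoint from $S_0$ --- and there is nothing preventing such cliques, since the sets to the right of $S_0$ need not touch $S_0$ at all --- survives clipping unchanged, so $\omega(\calF')$ is still $k$. Your own sentence ``the remaining sets, which are disjoint from $S_0$, can in principle carry a $k$-clique'' identifies the obstacle, but the subsequent appeal to ``$O(1)$ shelling rounds'' does not address it: each shelling round removes one leftmost set and its neighborhood, and there is no reason the number of rounds needed to hit every $k$-clique is bounded independently of $|\calF|$. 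The recursion $g(k)=O(g(k{-}1))$ therefore has no foundation. (Relatedly, neither Suk's argument for $x$-monotone curves nor McGuinness's triangle-free argument works by clipping at a single leftmost curve; both are considerably more involved, and the bound the paper obtains is double exponential in $k$, not linear.)

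What the paper does instead is not to reduce $\omega$ globally, but to build, inside any family of large chromatic number, an increasing ``trap''. Two decomposition lemmas are used repeatedly: a Gy\'arf\'as-type BFS layering producing an \emph{externally supported} subfamily of half the chromatic number, and a McGuinness-type splitting producing a subfamily $\calH$ in which any two intersecting members have many sets of $\calF$ between their bases. With these one finds, for $j=0,\dots,k$, subfamilies $\calS,\calG$ and sets $S_1,\dots,S_j\in\calS$ that pairwise intersect, such that $\calG$ is surrounded by $\bigcup\calS$, $\chi(\calG)$ is still large, and any set reaching from $\calG$ to $\ext(\calS)$ must hit every $S_i$ (this last uses a ``$k$-bracket'' built from a $k$-clique and a support, together with a topological lemma that a set crossing from the interior to the exterior of a bracket hits the support or all of the clique). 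At $j=k$ the external support produces an $S_{k+1}$ meeting all of $S_1,\dots,S_k$, a $(k{+}1)$-clique, contradiction. The induction hypothesis on $k{-}1$ is used only locally --- families of sets all meeting a fixed set have $\chi\le\xi_{k-1}$ --- and in a substantial technical lemma bounding $\chi$ of the sets surrounded by $\bigcup\calS$ that meet the ``cut'' of some set reaching $\bigcup\calS$. That lemma, not a single clipping step, is where the real work replacing $x$-monotonicity happens.
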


By contrast, Pawlik et al.\ \cite{PKK+13,PKK+14} proved that there are intersection graphs of straight-line segments (or geometric sets of many other kinds) with clique number $2$ and arbitrarily large chromatic number.
This justifies the assumption of Theorem \ref{thm:intro} that the sets are pierced by a common line.
The best known upper bound on the chromatic number of simple families of curves in the plane with clique number $\omega$ is $\smash[t]{\bigl(\frac{\log n}{\log\omega}\bigr)}^{\smash[t]{O(\log\omega)}}$ due to Fox and Pach \cite{FP12}.

The bound on the chromatic number following from our proof of Theorem \ref{thm:intro} is double exponential in terms of the clique number.

The ultimate goal of this quest is to understand the border line between the classes of graphs (and classes of geometric objects) that are $\chi$-bounded and those that are not.
In a preliminary version of this paper, we proposed the following two problems.

\begin{problem}
Are the families (not necessarily simple) of $x$-monotone curves in the plane pierced by a common vertical line $\chi$-bounded?
\end{problem}

\begin{problem}
Are the families of curves in the plane pierced by a common line $\chi$-bounded?
\end{problem}

Rok and Walczak \cite{RW14} proved recently that the answers to both these questions are positive.
However, the bound on the chromatic number in terms of the clique number resulting from their proof is enormous (greater than an exponential tower), which is much worse than the double exponential bound of Theorem \ref{thm:intro}.

\section{Topological preliminaries}

All the geometric sets that considered in this paper are assumed to be subsets of the Euclidean plane $\setR^2$ or, further in the paper, subsets of the closed upper halfplane $\setR\times[0,+\infty)$.
An \emph{arc} between points $x,y\in\setR^2$ is the image of a continuous injective map $\phi\colon[0,1]\to\setR^2$ such that $\phi(0)=x$ and $\phi(1)=y$.
A set $X\subset\setR^2$ is \emph{arc-connected} if any two points of $X$ are connected by an arc in $X$.
The union of two arc-connected sets that have non-empty intersection is itself arc-connected.
More generally, if $\calX$ is a family of arc-connected sets whose intersection graph is connected, then $\bigcup\calX$ is arc-connected.
For a set $X\subset\setR^2$, the relation $\{(x,y)\in X^2\colon X$ contains an arc between $x$ and $y\}$ is an equivalence, whose equivalence classes are the \emph{arc-connected components} of $X$.
Every arc-connected component of an open set is itself an open set.

All families of sets that we consider are finite.
A family $\calF$ of sets in $\setR^2$ is \emph{simple} if the intersection of any subfamily of $\calF$ is arc-connected (possibly empty).
A set $X$ is \emph{simple with respect to} a family $\calY$ if $\{X\}\cup\calY$ is simple.

\begin{lemma}\label{lem:curve}
Let\/ $X$ be a compact arc-connected set and\/ $\calY$ be a family of compact arc-connected sets such that\/ $X$ is simple with respect to\/ $\calY$ and the intersections of the members of\/ $\calY$ with\/ $X$ are pairwise disjoint.
Between any points\/ $x_1,x_2\in X$, there is an arc\/ $A\subset X$ that is simple with respect~to\/~$\calY$.
\end{lemma}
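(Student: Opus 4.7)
The plan is first to reduce the simplicity requirement to a condition on individual members of $\calY$, and then to construct the arc by successively ``shortcutting'' through each $X\cap Y$.

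For the reduction, consider any subfamily $\calZ\subseteq\{A\}\cup\calY$. If $A\notin\calZ$, then $\bigcap\calZ$ is arc-connected because $\{X\}\cup\calY$ is simple. If $A\in\calZ$ and $\calZ\setminus\{A\}$ contains two distinct members $Y_1,Y_2\in\calY$, then $\bigcap\calZ\subseteq X\cap Y_1\cap Y_2=\emptyset$ by pairwise disjointness, hence vacuously arc-connected. The remaining case leaves only subfamilies of the form $\{A\}$ or $\{A,Y\}$ with $Y\in\calY$. Thus it suffices to produce an arc $A\subset X$ from $x_1$ to $x_2$ such that $A\cap Y = A\cap(X\cap Y)$ is arc-connected for each single $Y\in\calY$.

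To construct such an $A$, enumerate $\calY=\{Y_1,\dots,Y_n\}$, start from any arc $A_0\subset X$ from $x_1$ to $x_2$ (available by arc-connectedness of $X$), and build arcs $A_k\subset X$ inductively. Given $A_{k-1}$ parametrized by a continuous injection $\phi_{k-1}\colon[0,1]\to X$, set $s_k=\inf\phi_{k-1}^{-1}(X\cap Y_k)$ and $t_k=\sup\phi_{k-1}^{-1}(X\cap Y_k)$ (if the preimage is empty, put $A_k:=A_{k-1}$ and move on). Since $X\cap Y_k$ is arc-connected, pick an arc $B_k\subseteq X\cap Y_k$ from $\phi_{k-1}(s_k)$ to $\phi_{k-1}(t_k)$ and define
\[
A_k:=\phi_{k-1}([0,s_k])\cup B_k\cup\phi_{k-1}([t_k,1]).
\]
By the choice of $s_k$ and $t_k$, the outer pieces $\phi_{k-1}([0,s_k))$ and $\phi_{k-1}((t_k,1])$ avoid $X\cap Y_k\supseteq B_k$, so the three pieces only share their attaching endpoints and $A_k$ is an arc in $X$ from $x_1$ to $x_2$.

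The inductive invariant to maintain is that $A_k\cap(X\cap Y_j)$ is arc-connected for every $j\le k$. At $j=k$ this intersection equals $B_k$. For $j<k$, the inductive hypothesis makes $\phi_{k-1}^{-1}(X\cap Y_j)$ a closed interval $[a_j,b_j]$, and here the crucial use of pairwise disjointness enters: because $\phi_{k-1}(s_k),\phi_{k-1}(t_k)\in X\cap Y_k$ and $\phi_{k-1}(a_j),\phi_{k-1}(b_j)\in X\cap Y_j$, none of $s_k,t_k$ can sit inside $[a_j,b_j]$ and none of $a_j,b_j$ can sit inside $[s_k,t_k]$. The only remaining configurations are that $[a_j,b_j]$ and $[s_k,t_k]$ are disjoint (so $A_k\cap(X\cap Y_j)$ is unchanged) or $[a_j,b_j]\subset(s_k,t_k)$ (so $A_k\cap(X\cap Y_j)=\emptyset$, since $B_k\subseteq X\cap Y_k$ is disjoint from $X\cap Y_j$); the symmetric inclusion $[s_k,t_k]\subset(a_j,b_j)$ is ruled out. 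The desired arc is $A:=A_n$.

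The main obstacle is precisely this potential interference between shortcuts, and the argument above shows that pairwise disjointness of the traces $X\cap Y$ forces every pair of shortcut interval and previously-tamed interval into one of two harmless configurations, so no step undoes the work of previous steps.
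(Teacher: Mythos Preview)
Your proof is correct and follows exactly the paper's inductive shortcut construction. One small slip: the assertion that ``none of $a_j,b_j$ can sit inside $[s_k,t_k]$'' is not justified (points of $(s_k,t_k)$ need not map into $Y_k$) and is in fact false in your own case $[a_j,b_j]\subset(s_k,t_k)$; but the first half, $s_k,t_k\notin[a_j,b_j]$, already suffices to force the dichotomy ``disjoint or $[a_j,b_j]\subset(s_k,t_k)$'', so the argument stands.
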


\begin{proof}
Let $\calY=\{Y_1,\ldots,Y_n\}$.
For $i\in\{0,\ldots,n\}$, we construct an arc $A_i\subset X$ between $x_1$ and $x_2$ that is simple with respect to $\{Y_1,\ldots,Y_i\}$.
As $X$ is arc-connected, we pick $A_0$ to be any arc between $x_1$ and $x_2$ within $X$.
We construct $A_i$ from $A_{i-1}$ as follows.
If $A_{i-1}\cap Y_i=\emptyset$, then we take $A_i=A_{i-1}$.
Otherwise, let $y_1$ and $y_2$ be respectively the first and the last points on $A_{i-1}$ that belong to $Y_i$ (which exist as $A_i\cap Y_i$ is non-empty and compact).
To obtain $A_i$, replace the part of $A_{i-1}$ between $y_1$ and $y_2$ by any arc between $y_1$ and $y_2$ in $X\cap Y_i$ (which exists because $X\cap Y_i$ is arc-connected).
Clearly, $A_i$ is simple with respect to $Y_i$.
Since $X\cap Y_i$ is disjoint from each of $Y_1,\ldots,Y_{i-1}$, $A_i$ remains simple with respect to $Y_1,\ldots,Y_{i-1}$.
\end{proof}

A \emph{Jordan curve} is the image of a continuous map $\phi\colon[0,1]\to\setR^2$ such that $\phi(0)=\phi(1)$ and $\phi$ is injective on $[0,1)$.
The famous Jordan curve theorem states that if $C\subset\setR^2$ is a Jordan curve, then $\setR^2\setminus C$ has exactly two arc-connected components, one bounded and one unbounded.
An extension of this, called Jordan-Sch\"onflies theorem, adds that there is a homeomorphism of $\setR^2$ that maps $C$ to a unit circle, the bounded arc-connected component of $\setR^2\setminus C$ to the interior of this circle, and the unbounded arc-connected component of $\setR^2\setminus C$ to the exterior of the circle.

We will use a special case of the Jordan curve theorem for arcs in the closed upper halfplane $\setR\times[0,+\infty)$.
Namely, if $x$ and $y$ are two points on the horizontal axis $\setR\times\{0\}$ and $A$ is an arc between $x$ and $y$ such that $A\setminus\{x,y\}\subset\setR\times(0,+\infty)$, then the set $(\setR\times[0,+\infty))\setminus A$ has exactly two arc-connected components, one bounded and one unbounded.
This in particular implies that for any four points $x_1$, $x_2$, $y_1$ and $y_2$ in this order on the horizontal axis, every arc in $\setR\times[0,+\infty)$ between $x_1$ and $y_1$ intersects every arc in $\setR\times[0,+\infty)$ between $x_2$ and $y_2$.

\section{Grounded families}

In Theorem \ref{thm:intro}, a family $\calF$ compact arc-connected sets in the plane is assumed to be pierced by a common line.
We assume without loss of generality that this piercing line is the horizontal axis $\setR\times\{0\}$
and call it the \emph{baseline}.
The \emph{base} of a set $X$, denoted by $\base(X)$, is the intersection of $X$ with the baseline.

We fix a positive integer $k$ and assume $\omega(\calF)\leq k$.
The intersection graph of the bases of the members of $\calF$ is an interval graph, so it can be properly colored with $k$ colors.
To find a proper coloring of $\calF$ with a number of colors bounded in terms of $k$, we can restrict our attention to one color class in the coloring of this interval graph.
Therefore, without loss of generality, we assume that no two members of $\calF$ intersect on the baseline and show that $\calF$ can be colored properly with a bounded number of colors.
Moreover, it is clear that the families $\calF^+=\{X\cap(\setR\times[0,+\infty))\colon X\in\calF\}$ and $\calF^-=\{X\cap(\setR\times(-\infty,0])\colon X\in\calF\}$ are simple.
It suffices to obtain proper colorings $\phi^+$ and $\phi^-$ of $\calF^+$ and $\calF^-$, respectively, with bounded numbers of colors, since then $\calF$ may be colored by pairs $(\phi^+,\phi^-)$.
We only focus on coloring $\calF^+$, as $\calF^-$ can be handled by symmetry.
To simplify notation, we rename $\calF^+$ to $\calF$.
Therefore, each set $X\in\calF$ is assumed to satisfy the following:
\begin{itemize}
\item $X\subset\setR\times[0,+\infty)$,
\item $X\cap\setR\times\{0\}$ is a non-empty interval,
\item $X$ is compact and arc-connected,
\end{itemize}
and $\calF$ is assumed to be simple.
Any set that satisfies the conditions above is called \emph{grounded}, and any simple family of grounded sets with pairwise disjoint bases is also called \emph{grounded} (see Figure \ref{fig:grounded-family}).

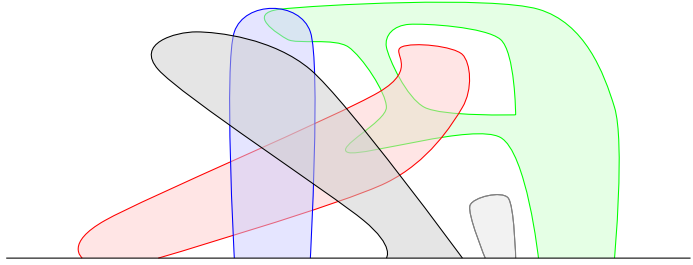
\begin{figure}[t]
\centering
\begin{tikzpicture}
\coordinate (Z0) at (4.0, 0.0);
\coordinate (Z1) at (3.5, 1.6);
\coordinate (Z2) at (1.5, 1.4);
\coordinate (Z3) at (2.0, 2.0);
\coordinate (Z32) at (1.5, 2.8);
\coordinate (Z34) at (0.7, 2.9);
\coordinate (Z35) at (0.6, 3.3);
\coordinate (Z36) at (4.0, 3.3);
\coordinate (Z4) at (5.0, 2.0);
\coordinate (Z5) at (5.0, 0.0);

\fill[opacity=0.5, fill=white!80!green] plot[smooth, tension=0.5] coordinates
  {(Z0) (Z1) (Z2) (Z3) (Z32) (Z34) (Z35) (Z36) (Z4) (Z5)};
\draw[color=green] plot[smooth, tension=0.5] coordinates
  {(Z0) (Z1) (Z2) (Z3) (Z32) (Z34) (Z35) (Z36) (Z4) (Z5)};

\coordinate (ZZ1) at (3.7, 1.9);
\coordinate (ZZ2) at (2.5, 2.0);
\coordinate (ZZ32) at (2.0, 2.8);
\coordinate (ZZ33) at (2.3, 3.1);
\coordinate (ZZ34) at (3.5, 2.9);
\coordinate (ZZ4) at (4.8, 1.8);

\draw[color=green, fill=white] plot[smooth, tension=0.5] coordinates
  {(ZZ1) (ZZ2) (ZZ32) (ZZ33) (ZZ34) (ZZ1)};

\coordinate (R0) at (-2.0, 0.0);
\coordinate (R05) at (-1.7, 0.5);
\coordinate (R1) at (1.9, 2.2);
\coordinate (R15) at (2.2, 2.8);
\coordinate (R2) at (3.0, 2.7);
\coordinate (R3) at (3.0, 2.0);
\coordinate (R4) at (2.0, 1.0);
\coordinate (R5) at (-1.0, 0.0);

\fill[opacity=0.5, fill=white!80!red] plot[smooth, tension=0.5] coordinates
  {(R0) (R05) (R1) (R15) (R2) (R3) (R4) (R5)};
\draw[color=red] plot[smooth, tension=0.5] coordinates
  {(R0) (R05) (R1) (R15) (R2) (R3) (R4) (R5)};

\coordinate (N0) at (0.0, 0.0);
\coordinate (N1) at (0.0, 3.0);
\coordinate (N2) at (1.0, 3.0);
\coordinate (N3) at (1.0, 0.0);

\fill[opacity=0.5, fill=white!80!blue] plot[smooth, tension=0.5] coordinates
  {(N0) (N1) (N2) (N3)};
\draw[color=blue] plot[smooth, tension=0.5] coordinates
  {(N0) (N1) (N2) (N3)};

\coordinate (B0) at (2.0, 0.0);
\coordinate (B1) at (1.7, 0.5);
\coordinate (B2) at (-1.0, 2.5);
\coordinate (B3) at (-0.5, 3.0);
\coordinate (B4) at (1.0, 2.5);
\coordinate (B5) at (3.0, 0.0);

\fill[opacity=0.5, fill=white!80!black] plot[smooth, tension=0.5] coordinates
  {(B0) (B1) (B2) (B3) (B4) (B5)};
\draw[color=black] plot[smooth, tension=0.5] coordinates
  {(B0) (B1) (B2) (B3) (B4) (B5)};
 
\coordinate (G0) at (3.3, 0.0);
\coordinate (G1) at (3.1, 0.7);
\coordinate (G2) at (3.6, 0.8);
\coordinate (G3) at (3.7, 0.0);

\draw[opacity=0.5, fill=white!80!gray] plot[smooth, tension=0.5] coordinates
  {(G0) (G1) (G2) (G3)};
\draw[color=gray] plot[smooth, tension=0.5] coordinates
  {(G0) (G1) (G2) (G3)};

\draw (-3,0) -- (6,0) {};
\end{tikzpicture}
\caption{A grounded family of sets}
\label{fig:grounded-family}
\end{figure}

All the geometric sets that we consider from now on are contained in $\setR\times[0,+\infty)$.
To prove Theorem \ref{thm:intro}, it suffices to show the following.

\begin{proposition}\label{prop:main}
For\/ $k\geq 1$, there is\/ $\xi_k$ such that\/ $\chi(\calF)\leq\xi_k$ holds for any grounded family\/ $\calF$ with\/ $\omega(\calF)\leq k$.
\end{proposition}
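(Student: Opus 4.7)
The plan is to prove the proposition by induction on $k$. The base case $k=1$ is immediate: a family of clique number $1$ has no edges, so $\chi(\calF)=1$. For the inductive step, assume the bound $\xi_{k-1}$ is known. The aim is to partition $\calF$ into $D_k$ subfamilies, each of clique number strictly less than $k$, so that the inductive hypothesis gives $\xi_k\leq D_k\cdot\xi_{k-1}$.

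To produce such a partition I would follow a McGuinness-style ``probe'' strategy adapted to arbitrary clique number. Order the members of $\calF$ by their bases on the baseline. Sweeping from left to right, for each set $X\in\calF$ apply Lemma \ref{lem:curve} to extract a witness arc $A_X\subseteq X$ joining the two endpoints of $\base(X)$ that is simple with respect to a chosen auxiliary subfamily. Such an arc together with the baseline segment between the endpoints of $\base(X)$ forms a Jordan curve in the closed upper halfplane; by the halfplane form of the Jordan curve theorem recalled just after Lemma \ref{lem:curve}, it divides the rest of the halfplane into a bounded ``interior'' and an unbounded ``exterior''. The operator names \emph{int}, \emph{ext}, \emph{cut}, \emph{leftclip}, \emph{rightclip} declared in the preamble strongly suggest that the actual proof then clips each $Y\in\calF$ against these regions to pass to well-behaved recursive subfamilies.

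Using this apparatus, I would assign to each $Y\in\calF$ a \emph{depth} counting how many witness arcs of earlier-ordered sets contain $Y$ in their interior region. The central combinatorial claim is then that sets of a single depth class have clique number at most $k-1$: a $k$-clique sitting at a common depth would, by simplicity together with the halfplane Jordan curve theorem, force one of its members to lie strictly deeper, a contradiction. Each depth class would therefore be colorable with $\xi_{k-1}$ colors by induction, and a product coloring over depth classes yields the required bound on $\chi(\calF)$.

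The main obstacle will be bounding $D_k$, the number of depth classes, by a function of $k$ alone and independent of $|\calF|$. A priori the depth of a set can be arbitrarily large, and the simplicity of $\calF$ must be exploited in a delicate recursive way — probably through an auxiliary induction on a secondary parameter (e.g.\ a refined ``depth modulo clique-number'' quantity, or an iterated application of Lemma \ref{lem:curve} on clipped subfamilies) — in order to collapse arbitrarily deep configurations down to a bounded number of depth classes. I expect this step to be the source of the double-exponential loss announced in the introduction, and to require the most careful interplay between the topology (Jordan curves in the halfplane, Lemma \ref{lem:curve}) and the combinatorics of the clique-number hypothesis.
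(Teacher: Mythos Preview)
Your proposal has a genuine gap at its core. The ``depth'' invariant you define is vacuous as stated: if $X\prec Y$ then $\base(Y)$ lies to the right of $\base(X)$, hence outside the bounded region enclosed by the witness arc $A_X$ together with $\base(X)$; so no grounded set can lie in the interior of an earlier set's Jordan curve, and every $Y$ has depth~$0$. Even if you repair the definition (for instance by counting containment in interiors of arcs built from \emph{pairs} of intersecting sets, or by switching to BFS layers in the intersection graph), your central combinatorial claim---that a $k$-clique at a common depth forces one member strictly deeper---is unsubstantiated, and the obvious reading is false: a $k$-clique at the very front of the $\prec$-order, with no earlier sets at all, already refutes it. Finally, you explicitly concede that bounding $D_k$ is the crux and leave it open; that is not a presentational gap but a missing argument.

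The paper's route is structurally different. It also inducts on $k$, but the inductive step is a contradiction argument rather than a partition into small-clique pieces. Assuming $\chi(\calF)>\xi_k$, it iterates Lemma~\ref{lem:level} (the Gy\'arf\'as externally-supported lemma) $k{+}1$ times to obtain a nested chain $\calF=\calF_{k+1}\supset\cdots\supset\calF_0$ of externally supported, high-$\chi$ subfamilies. The heart is an inner induction on $j=0,\ldots,k$ (Claim~\ref{cla:main}): at stage $j$ one has a surrounded high-$\chi$ subfamily $\calG\subset\calF_j$ together with pairwise intersecting $S_1,\ldots,S_j$ such that any set reaching from $\calG$ to the exterior must meet every $S_i$. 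To go from $j{-}1$ to $j$, Lemma~\ref{lem:graph} locates a $k$-clique $\calK$ deep inside $\calG'$, external support produces $S_j$ touching both $\int(\calK)$ and the outside, and $\calK\cup\{S_j\}$ forms a $k$-bracket; Corollary~\ref{cor:bracket} then yields the required intersection property for $S_j$. The quantitative control keeping $\chi(\calG)$ large through this step is provided by the separate technical Lemma~\ref{lem:dist2}, which bounds the chromatic number of the sets surrounded by a barrier $S$ that touch $\bigcup\cut(\calR,S)$; this lemma is where $\cut$, $\leftclip$, $\rightclip$ are actually used, and its proof is the source of the double-exponential loss. At $j=k$ a final use of external support produces $S_{k+1}$ intersecting all of $S_1,\ldots,S_k$, a $(k{+}1)$-clique contradicting $\omega(\calF)\le k$.
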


The case $k=1$ is trivial, and the case $k=2$ with some additional assumptions meant to avoid topological pathologies was settled by McGuinness \cite{McG00}.

We write $X\prec Y$ if $\base(X)$ is entirely to the left of $\base(Y)$.
The relation $\prec$ is a total order on a grounded family and naturally extends to its subfamilies (or any other families of grounded sets with pairwise disjoint bases): for example, $X\prec\calY$ denotes that $X\prec Y$ for any $Y\in\calY$.
For grounded sets $X_1$ and $X_2$ such that $X_1\prec X_2$, we define $\calF(X_1,X_2)=\{Y\in\calF\colon X_1\prec Y\prec X_2\}$.
For a grounded set $X$, we define $\calF(-\infty,X)=\{Y\in\calF\colon Y\prec X\}$ and $\calF(X,+\infty)=\{Y\in\calF\colon X\prec Y\}$.

The proof of Proposition \ref{prop:main} heavily depends on two decomposition lemmas, which given a grounded family with large chromatic number find its subfamily with large chromatic number and some special properties.
The first one is a reformulation of Lemma 2.1 in \cite{McG96}.

\begin{lemma}\label{lem:graph}
Let\/ $\calF$ be a grounded family with\/ $\chi(\calF)>2a(b+1)$, where\/ $a,b\geq 0$.
There is a subfamily\/ $\calH$ of\/ $\calF$ that satisfies\/ $\chi(\calH)>a$ and\/ $\chi(\calF(H_1,H_2))>b$ for any intersecting\/ $H_1,H_2\in\calH$.
\end{lemma}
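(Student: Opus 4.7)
The plan is to prove the contrapositive: assume that every \emph{good} subfamily $\calH \subseteq \calF$---meaning every intersecting pair $H_1 \prec H_2$ in $\calH$ satisfies $\chi(\calF(H_1,H_2)) > b$---has $\chi(\calH) \leq a$. I will derive $\chi(\calF) \leq 2a(b+1)$.

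The natural tool is the auxiliary \emph{close-intersection graph} $G^\ast$ on $\calF$, with an edge $\{X,Y\}$ (for $X \prec Y$) exactly when $X \cap Y \neq \emptyset$ and $\chi(\calF(X,Y)) \leq b$. A subfamily is good if and only if it is $G^\ast$-independent. Consequently, any proper $G^\ast$-coloring with $m$ colors partitions $\calF$ into $m$ good subfamilies, each of intersection-graph chromatic number at most $a$ by hypothesis; coloring these classes on disjoint palettes of size $a$ produces a proper coloring of $\calF$ with $ma$ colors. The problem thus reduces to the structural bound $\chi(G^\ast) \leq 2(b+1)$.

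To establish this bound I would sweep $\calF$ from left to right in the order $\prec$ and apply a carefully chosen greedy. The key local estimate is that for each $X \in \calF$ the left-$G^\ast$-neighborhood
\[
N^-(X) = \{Y \in \calF : Y \prec X,\ Y \cap X \neq \emptyset,\ \chi(\calF(Y,X)) \leq b\}
\]
has small chromatic number in the intersection graph: listing its elements $Y_1 \prec \dots \prec Y_r$ yields $\{Y_2,\dots,Y_r\} \subseteq \calF(Y_1,X)$, so $\chi(\{Y_2,\dots,Y_r\}) \leq b$, and adding $Y_1$ gives $\chi(N^-(X)) \leq b+1$. A symmetric estimate $\chi(N^+(X)) \leq b+1$ holds on the right. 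Splitting the palette of size $2(b+1)$ into a ``left'' half and a ``right'' half of $b+1$ colors each, the greedy sweep can at every step find, within one of the two halves, a color not already blocked by the processed $G^\ast$-neighbors, ensuring the bound.

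The main obstacle is translating the intersection-graph bound $\chi(N^-(X)) \leq b+1$ into a bound on the number of \emph{distinct colors} actually assigned to $N^-(X)$ by the greedy. The cleanest way to arrange this---and the way followed by McGuinness in his Lemma~2.1 of \cite{McG96}, of which the present statement is a direct reformulation---is to guide the greedy by the \emph{chromatic level} $c(X) = \chi(\{Z \in \calF : Z \preceq X\})$. The function $c$ is non-decreasing along $\prec$, and for any $G^\ast$-edge $X \prec Y$ the decomposition $\{Z : Z \preceq Y\} = \{Z : Z \preceq X\} \cup \calF(X,Y) \cup \{Y\}$ together with subadditivity of $\chi$ gives $c(Y) - c(X) \leq b+1$. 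This slow variation of $c$ along $G^\ast$-edges combines with the half-palette split to deliver the required proper $G^\ast$-coloring with $2(b+1)$ colors, completing the proof.
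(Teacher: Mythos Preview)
Your contrapositive framework via the auxiliary graph $G^\ast$ is sound, and the reduction to $\chi(G^\ast)\le 2(b+1)$ is exactly equivalent to what the paper proves. The gap is in your proposed proof of that bound. The level function $c(X)=\chi(\{Z\in\calF:Z\preceq X\})$ does satisfy $0\le c(Y)-c(X)\le b+1$ along any $G^\ast$-edge $X\prec Y$, but this does not exclude $c(X)=c(Y)$, so no coloring depending only on $c$ can be a proper $G^\ast$-coloring. Concretely, take $b=0$ and a grounded family $P_1\prec P_2\prec P_3$ forming a path in the intersection graph ($P_i\cap P_{i+1}\ne\emptyset$, $P_1\cap P_3=\emptyset$): then $c(P_2)=c(P_3)=2$, yet $\{P_2,P_3\}$ is a $G^\ast$-edge. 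The same example shows that the level blocks $\{X:j(b+1)<c(X)\le(j+1)(b+1)\}$ need not have intersection-chromatic number at most $b+1$, so the half-palette split does not close the argument as stated; you never actually specify a $G^\ast$-coloring, and the ingredients you list do not assemble into one.

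The paper (following McGuinness) uses a \emph{greedy} block decomposition rather than level blocks: scan $\calF$ in $\prec$-order and cut off a block $\calF_i$ as soon as $\chi(\calF_i)=b+1$, producing $\calF_0\prec\cdots\prec\calF_n$. Now color each $\calF_i$ properly with $b+1$ colors and record the parity of $i$. In your language, the pair (parity of $i$, color within $\calF_i$) is a proper $G^\ast$-coloring with $2(b+1)$ colors: two sets receiving the same pair and lying in the same $\calF_i$ are non-intersecting by properness; two sets in distinct blocks of equal parity have an entire block $\calF_j$ strictly between them, forcing $\chi(\calF(H_1,H_2))\ge b+1>b$, hence no $G^\ast$-edge. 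The paper then takes $\calH$ to be the color class of largest chromatic number. The essential point your sketch misses is that the blocks must be \emph{defined} so that each has $\chi=b+1$; this is exactly what the greedy cut guarantees and what the behavior of $c$ alone does not.
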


\begin{proof}
We partition $\calF$ into subfamilies $\calF_0\prec\cdots\prec\calF_n$ so that $\chi(\calF_i)=b+1$ for $0\leq i<n$.
This can be done by adding sets to $\calF_0$ in the increasing $\prec$-order until we get $\chi(\calF_0)=b+1$, then following the same procedure with the remaining sets to form $\calF_1$, and so on.
Let $\calF^0=\bigcup_i\calF_{2i}$ and $\calF^1=\bigcup_i\calF_{2i+1}$.
Since $\chi(\calF^0\cup\calF^1)>2a(b+1)$, we have $\chi(\calF^k)>a(b+1)$ for $k=0$ or $k=1$.
We now color each $\calF_{2i+k}$ properly using the same set of $b+1$ colors.
This coloring induces a partitioning of the entire $\calF^k$ into subfamilies $\calH_0,\ldots,\calH_b$ such that for $0\leq i\leq n$, $0\leq j\leq b$ the family $\calF_i\cap\calH_j$ is independent.
We set $\calH=\calH_j$, where $\calH_j$ has the maximum chromatic number among $\calH_0,\ldots,\calH_b$.
Since $\chi(\calF^k)>a(b+1)$, we have $\chi(\calH)>a$.
It remains to show that $\chi(\calF(H_1,H_2))>b$ for $H_1,H_2\in\calH$ with $H_1\cap H_2\neq\emptyset$.
Indeed, such sets $H_1$ and $H_2$ must lie in different families $\calF_{2i_1+k}$ and $\calF_{2i_2+k}$, respectively, so $\chi(\calF(H_1,H_2))\geq\chi(\calF_{2i_1+k+1})=b+1>b$, as required.
\end{proof}

For a set $X$, we define $\ext(X)$ to be the only unbounded arc-connected component of $(\setR\times[0,+\infty))\setminus X$.
For a grounded family $\calF$, we define $\ext(\calF)=\ext(\bigcup\calF)$.
A subfamily $\calG$ of a grounded family $\calF$ is \emph{externally supported} in $\calF$ if for any $X\in\calG$ there exists $Y\in\calF$ such that $Y\cap X\neq\emptyset$ and $Y\cap\ext(\calG)\neq\emptyset$ (see Figure \ref{fig:externally-supported-family}).
The idea behind the following lemma is due to Gy\'{a}rf\'{a}s \cite{Gya85} and was subsequently used in \cite{McG96,McG00,Suk14}.

\begin{figure}[t]
\centering
\begin{tikzpicture}[xscale=.5]
\coordinate (A0) at (-2.0, 0.0);
\coordinate (A1) at (2.0, 3.0);
\coordinate (A2) at (3.0, 4.0);
\coordinate (A3) at (4.0, 3.0);
\coordinate (A4) at (4.0, 2.5);
\coordinate (A5) at (1.0, 1.5);
\coordinate (A6) at (-1.0, 0.0);

\fill[opacity=0.5, fill=white!90!black] plot[smooth, tension=0.5] coordinates
  {(A0) (A1) (A2) (A3) (A4) (A5) (A6)};
\draw[color=black] plot[smooth, tension=0.5] coordinates
  {(A0) (A1) (A2) (A3) (A4) (A5) (A6)};

\coordinate (B0) at (2.0, 0.0);
\coordinate (B1) at (2.0, 1.0);
\coordinate (B2) at (3.0, 1.0);
\coordinate (B3) at (3.0, 0.0);

\fill[opacity=0.5, fill=white!90!black] plot[smooth, tension=0.5] coordinates
  {(B0) (B1) (B2) (B3)};
\draw[color=black] plot[smooth, tension=0.5] coordinates
  {(B0) (B1) (B2) (B3)};

\coordinate (C0) at (6.0, 0.0);
\coordinate (C1) at (2.0, 3.0);
\coordinate (C2) at (1.8, 3.5);
\coordinate (C3) at (2.5, 3.5);
\coordinate (C4) at (7.0, 0.0);

\fill[opacity=0.5, fill=white!90!black] plot[smooth, tension=0.5] coordinates
  {(C0) (C1) (C2) (C3) (C4)};
\draw[color=black] plot[smooth, tension=0.5] coordinates
  {(C0) (C1) (C2) (C3) (C4)};

\coordinate (D0) at (4.0, 0.0);
\coordinate (D1) at (4.2, 1.0);
\coordinate (D2) at (4.8, 2.0);
\coordinate (D3) at (5.5, 1.5);
\coordinate (D4) at (5.0, 0.0);

\fill[opacity=0.5, fill=white!90!black] plot[smooth, tension=0.5] coordinates
  {(D0) (D1) (D2) (D3) (D4)};
\draw[color=black] plot[smooth, tension=0.5] coordinates
  {(D0) (D1) (D2) (D3) (D4)};

\coordinate (E0) at (8.0, 0.0);
\coordinate (E1) at (7.7, 0.5);
\coordinate (E15) at (7.9, 1.6);
\coordinate (E2) at (9.2, 1.9);
\coordinate (E3) at (9.0, 0.0);

\fill[opacity=0.5, fill=white!90!black] plot[smooth, tension=0.5] coordinates
  {(E0) (E1) (E15) (E2) (E3)};
\draw[color=black] plot[smooth, tension=0.5] coordinates
  {(E0) (E1) (E15) (E2) (E3)};

\coordinate (F0) at (12.0, 0.0);
\coordinate (F1) at (12.0, 1.0);
\coordinate (F2) at (13.0, 1.0);
\coordinate (F3) at (13.0, 0.0);

\fill[opacity=0.5, fill=white!90!black] plot[smooth, tension=0.5] coordinates
  {(F0) (F1) (F2) (F3)};
\draw[color=black] plot[smooth, tension=0.5] coordinates
  {(F0) (F1) (F2) (F3)};

\coordinate (G0) at (14.0, 0.0);
\coordinate (G1) at (12.0, 2.0);
\coordinate (G2) at (7.0, 3.0);
\coordinate (G3) at (4.0, 3.0);
\coordinate (G4) at (3.5, 3.5);
\coordinate (G5) at (8.0, 3.5);
\coordinate (G5) at (13.0, 3.0);
\coordinate (G6) at (15.0, 0.0);

\fill[opacity=0.5, fill=white!90!black] plot[smooth, tension=0.5] coordinates
  {(G0) (G1) (G2) (G3) (G4) (G5) (G6)};
\draw[color=black] plot[smooth, tension=0.5] coordinates
  {(G0) (G1) (G2) (G3) (G4) (G5) (G6)};

\coordinate (SA0) at (-4.0, 0.0);
\coordinate (SA1) at (-3.0, 1.5);
\coordinate (SA2) at (-1.0, 2.2);
\coordinate (SA3) at (3.0, 1.8);
\coordinate (SA4) at (4.2, 1.0);
\coordinate (SA5) at (4.1, 0.6);
\coordinate (SA6) at (3.1, 1.4);
\coordinate (SA7) at (2.0, 1.6);
\coordinate (SA8) at (-2.0, 1.0);
\coordinate (SA9) at (-3.0, 0.0);

\fill[opacity=0.5, fill=white!80!red] plot[smooth, tension=0.5] coordinates
  {(SA0) (SA1) (SA2) (SA3) (SA4) (SA5) (SA6) (SA7) (SA8) (SA9)};
\draw[dashed,color=red] plot[smooth, tension=0.5] coordinates
  {(SA0) (SA1) (SA2) (SA3) (SA4) (SA5) (SA6) (SA7) (SA8) (SA9)};

\coordinate (SB0) at (0.0, 0.0);
\coordinate (SB1) at (-2.0, 1.0);
\coordinate (SB2) at (-2.5, 2.0);
\coordinate (SB3) at (-2.0, 3.0);
\coordinate (SB4) at (0.0, 4.0);
\coordinate (SB5) at (2.0, 3.5);
\coordinate (SB6) at (1.6, 3.0);
\coordinate (SB7) at (0.9, 2.6);
\coordinate (SB8) at (1.8, 2.0);
\coordinate (SB9) at (2.2, 1.0);
\coordinate (SB11) at (2.0, 0.8);
\coordinate (SB13) at (0.8, 1.0);
\coordinate (SB14) at (-0.2, 3.0);
\coordinate (SB15) at (-1.0, 2.0);
\coordinate (SB16) at (1.0, 0.0);

\fill[opacity=0.5, fill=white!80!red] plot[smooth, tension=0.5] coordinates
  {(SB0) (SB1) (SB2) (SB3) (SB4) (SB5) (SB6) (SB7) (SB8) (SB9) (SB11) (SB13) (SB14) (SB15) (SB16)};
\draw[dashed,color=red] plot[smooth, tension=0.5] coordinates
  {(SB0) (SB1) (SB2) (SB3) (SB4) (SB5) (SB6) (SB7) (SB8) (SB9) (SB11) (SB13) (SB14) (SB15) (SB16)};

\coordinate (SC0) at (10.0, 0.0);
\coordinate (SC1) at (9.0, 1.0);
\coordinate (SC2) at (8.0, 1.5);
\coordinate (SC3) at (7.5, 2.0);
\coordinate (SC4) at (7.0, 2.5);
\coordinate (SC5) at (6.5, 3.5);
\coordinate (SC6) at (7.0,4.0);
\coordinate (SC7) at (7.5, 3.9);
\coordinate (SC8) at (8.0, 3.5);
\coordinate (SC9) at (9.0, 2.5);
\coordinate (SC10) at (10.0, 2.0);
\coordinate (SC11) at (11.0, 0.0);

\fill[opacity=0.5, fill=white!80!red] plot[smooth, tension=0.5] coordinates
  {(SC0) (SC1) (SC2) (SC3) (SC4) (SC5) (SC6) (SC7) (SC8) (SC10) (SC11)};
\draw[dashed,color=red] plot[smooth, tension=0.5] coordinates
  {(SC0) (SC1) (SC2) (SC3) (SC4) (SC5) (SC6) (SC7) (SC8) (SC10) (SC11)};
  
\coordinate (SD0) at (16.0, 0.0);
\coordinate (SD1) at (14.0, 1.0);
\coordinate (SD2) at (13.0, 1.3);
\coordinate (SD3) at (11.8, 0.5);
\coordinate (SD4) at (11.6, 1.0);
\coordinate (SD5) at (12.0, 1.8);
\coordinate (SD6) at (13.0, 2.1);
\coordinate (SD7) at (14.5, 1.5);
\coordinate (SD11) at (17.0, 0.0);

\fill[opacity=0.5, fill=white!80!red] plot[smooth, tension=0.5] coordinates
  {(SD0) (SD1) (SD2) (SD3) (SD4) (SD5) (SD6) (SD7) (SD11)};
\draw[dashed,color=red] plot[smooth, tension=0.5] coordinates
  {(SD0) (SD1) (SD2) (SD3) (SD4) (SD5) (SD6) (SD7) (SD11)};
  
\draw (-5,0) -- (18,0) {};
\end{tikzpicture}
\caption{An externally supported family of sets}
\label{fig:externally-supported-family}
\end{figure}
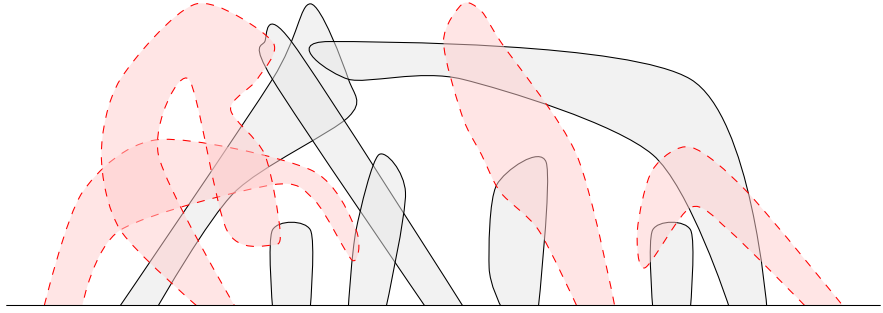

\begin{lemma}\label{lem:level}
Let\/ $\calF$ be a grounded family with\/ $\chi(\calF)>2a$, where\/ $a\geq 1$.
There is a subfamily\/ $\calG$ of\/ $\calF$ that is externally supported in\/ $\calF$ and satisfies\/ $\chi(\calG)>a$.
\end{lemma}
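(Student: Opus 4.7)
The plan is to run a BFS decomposition from a leftmost set and then find a level that is both chromatically rich and externally supported. Without loss of generality, I may replace $\calF$ with a connected component of its intersection graph of largest chromatic number, so that $\calF$ is connected in this graph and still has $\chi(\calF)>2a$. Let $X_0$ be the $\prec$-smallest element of $\calF$, and for $i\geq 0$ let $L_i$ be the set of members of $\calF$ at distance exactly $i$ from $X_0$ in the intersection graph. Then $\{L_0,L_1,\ldots\}$ partitions $\calF$, and by the BFS property every edge of $\calF$ lies inside some $L_i$ or between consecutive $L_i$ and $L_{i+1}$.

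Using the same palette on all even-indexed levels and an independent palette on all odd-indexed levels, I get a proper coloring of $\calF$ with at most $2\max_i\chi(L_i)$ colors. Since $\chi(\calF)>2a$, some level satisfies $\chi(L_i)>a$. As $L_0=\{X_0\}$ has chromatic number $1\leq a$, the index $i$ is at least $1$. I will set $\calG=L_i$ and verify that $\calG$ is externally supported in $\calF$.

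Fix $X\in\calG$. By the BFS definition there is a chain $X_0=Y_0,Y_1,\ldots,Y_{i-1},Y_i=X$ with $Y_j\in L_j$ and consecutive intersections nonempty. The key observation is that $Y_0,\ldots,Y_{i-2}$ are all at distance $\leq i-2$ from $X_0$, so they can meet only sets of $\calF$ at level $\leq i-1$; in particular, they are disjoint from $\bigcup\calG$. Hence $U:=Y_0\cup\cdots\cup Y_{i-2}$ is an arc-connected subset of $(\setR\times[0,+\infty))\setminus\bigcup\calG$ (arc-connected by the ``connected intersection graph implies arc-connected union'' remark from the topological preliminaries). Because $X_0$ is $\prec$-smallest and bases are pairwise disjoint, $\base(X_0)$ lies strictly to the left of every base of a set in $\calG$; the leftmost point of $\base(X_0)$ is therefore on the baseline, outside $\bigcup\calG$, and reachable to infinity by a leftward ray, so it lies in $\ext(\calG)$. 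Consequently $U$ meets $\ext(\calG)$, and being arc-connected and disjoint from $\bigcup\calG$, the whole of $U$ is contained in $\ext(\calG)$. Thus $Y_{i-1}\cap\ext(\calG)\supseteq Y_{i-1}\cap Y_{i-2}\neq\emptyset$ (for $i\geq 2$), and $Y_{i-1}$ also intersects $X$ by construction, so $Y_{i-1}$ witnesses external support of $X$. The edge case $i=1$ is handled directly: then $Y_{i-1}=X_0$, which itself contains a point of $\ext(\calG)$.

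I expect the only real subtlety to be the exterior argument, i.e.\ showing that the chain sitting below level $i$ actually reaches the unbounded component of $(\setR\times[0,+\infty))\setminus\bigcup\calG$ rather than being trapped in a bounded pocket. This is where groundedness is essential: the leftmost-base choice of $X_0$ forces a baseline point of $X_0$ into $\ext(\calG)$, and from there arc-connectivity of $U$ propagates the membership in $\ext(\calG)$ along the chain to its final link $Y_{i-1}$. Everything else is a standard Gyárfás-type level decomposition.
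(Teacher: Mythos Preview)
Your proof is correct and follows essentially the same approach as the paper: restrict to a connected component, run a BFS from the $\prec$-least set, find a level $L_i$ with $\chi(L_i)>a$ via the even/odd parity argument, and then use the shortest-path chain together with the fact that $X_0$ meets $\ext(\calG)$ to produce the supporting set $Y_{i-1}$. Your write-up is a bit more explicit than the paper's about why $X_0$ meets $\ext(\calG)$ (via the leftward baseline ray), but the argument is the same Gy\'arf\'as-type level decomposition.
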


\begin{proof}
For convenience, we restrict $\calF$ to its connected component with maximum chromatic number.
Let $X_0$ be the $\prec$-least member of $\calF$.
For $i\geq 0$, let $\calF_i$ be the family of members of $\calF$ that are at distance $i$ from $X_0$ in the intersection graph of $\calF$.
It follows that $\calF_0=\{X_0\}$ and, for $|i-j|>1$, each member of $\calF_i$ is disjoint from each member of $\calF_j$.
Clearly, $\chi(\bigcup_i\calF_{2i})>a$ or $\chi(\bigcup_i\calF_{2i+1})>a$, and therefore there is $d\geq 1$ with $\chi(\calF_d)>a$.
We claim that $\calF_d$ is externally supported in $\calF$. 
Fix $X_d\in\calF_d$, and let $X_0,\ldots,X_d$ be a shortest path from $X_0$ to $X_d$ in the intersection graph of $\calF$.
Since $X_0\cap\ext(\calF_d)\neq\emptyset$ and $X_0,\ldots,X_{d-2}$ are disjoint from $\bigcup\calF_d$, we have $X_0,\ldots,X_{d-2}\subset\ext(\calF_d)$.
Thus $X_{d-1}\cap\ext(\calF_d)\neq\emptyset$ and $X_{d-1}\cap X_d\neq\emptyset$.
\end{proof}

\section{Cliques and brackets}

Let $\calF$ be a grounded family with $\omega(\calF)\leq k$.
A \emph{$k$-clique} in $\calF$ is a family of $k$ pairwise intersecting members of $\calF$.
For a $k$-clique $\calK$, we denote by $\int(\calK)$ the only arc-connected component of $(\setR\times[0,+\infty))\setminus\bigcup\calK$ containing the part of the baseline between the two $\prec$-least members of $\calK$.
A \emph{$k$-bracket} in $\calF$ is a subfamily of $\calF$ consisting of a $k$-clique $\calK$ and a set $S$ called the \emph{support} such that $S\prec\calK$ or $\calK\prec S$ and $S\cap\int(\calK)\neq\emptyset$.
For such a $k$-bracket $\calB$, we denote by $\int(\calB)$ the only arc-connected component of $(\setR\times[0,+\infty))\setminus\bigcup\calB$ containing the part of the baseline between $S$ and $\calK$ (see Figure \ref{fig:k-bracket}).

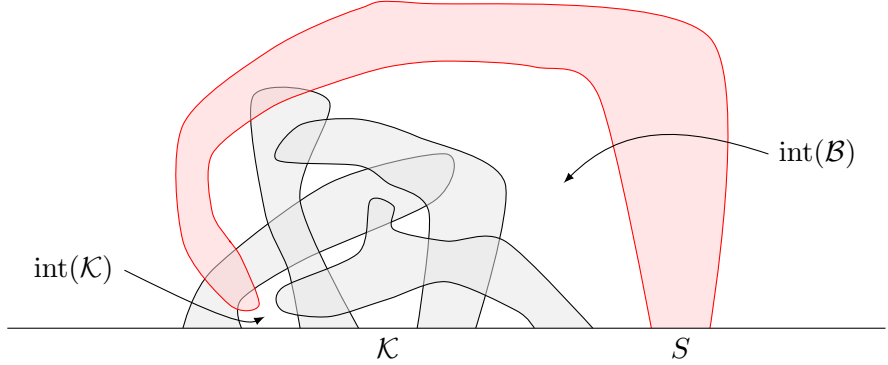
\begin{figure}[t]
\centering
\begin{tikzpicture}[scale=.77,>=latex]
\coordinate (A0) at (0.0, 0.0);
\coordinate (A1) at (0.5, 1.0);
\coordinate (A2) at (2.5, 2.5);
\coordinate (A3) at (4.5, 3.0);
\coordinate (A4) at (4.3, 2.2);
\coordinate (A5) at (2.1, 1.2);
\coordinate (A6) at (1.0, 0.5);
\coordinate (A7) at (1.0, 0.0);

\fill[opacity=0.5, fill=white!90!black] plot[smooth, tension=0.5] coordinates
  {(A0) (A1) (A2) (A3) (A4) (A5) (A6) (A7)};
\draw[color=black] plot[smooth, tension=0.5] coordinates
  {(A0) (A1) (A2) (A3) (A4) (A5) (A6) (A7)};

\coordinate (B0) at (2.0, 0.0);
\coordinate (B1) at (1.8, 1.0);
\coordinate (B2) at (1.4, 2.0);
\coordinate (B3) at (1.2, 4.0);
\coordinate (B4) at (2.5, 3.9);
\coordinate (B5) at (2.0, 2.2);
\coordinate (B6) at (3.0, 0.0);

\fill[opacity=0.5, fill=white!90!black] plot[smooth, tension=0.5] coordinates
  {(B0) (B1) (B2) (B3) (B4) (B5) (B6)};
\draw[color=black] plot[smooth, tension=0.5] coordinates
  {(B0) (B1) (B2) (B3) (B4) (B5) (B6)};

\coordinate (C0) at (4.0, 0.0);
\coordinate (C1) at (4.2, 2.0);
\coordinate (C2) at (3.5, 2.5);
\coordinate (C3) at (3.0, 2.8);
\coordinate (C4) at (1.6, 2.9);
\coordinate (C5) at (2.0, 3.5);
\coordinate (C6) at (3.0, 3.6);
\coordinate (C7) at (4.0, 3.3);
\coordinate (C8) at (5.5, 2.5);
\coordinate (C9) at (5.0, 0.0);

\fill[opacity=0.5, fill=white!90!black] plot[smooth, tension=0.5] coordinates
  {(C0) (C1) (C2) (C3) (C4) (C5) (C6) (C7) (C8) (C9)};
\draw[color=black] plot[smooth, tension=0.5] coordinates
  {(C0) (C1) (C2) (C3) (C4) (C5) (C6) (C7) (C8) (C9)};

\coordinate (D0) at (6.0, 0.0);
\coordinate (D1) at (5.7, 0.3);
\coordinate (D2) at (4.5, 0.8);
\coordinate (D3) at (3.5, 0.5);
\coordinate (D4) at (2.5, 0.2);
\coordinate (D5) at (1.7, 0.3);
\coordinate (D6) at (1.7, 0.7);
\coordinate (D7) at (3.0, 1.3);
\coordinate (D8) at (3.2, 2.2);
\coordinate (D9) at (3.6, 2.1);
\coordinate (D10) at (3.6, 1.7);
\coordinate (D11) at (4.5, 1.5);
\coordinate (D12) at (5.5, 1.5);
\coordinate (D13) at (7.0, 0.0);

\fill[opacity=0.5, fill=white!90!black] plot[smooth, tension=0.5] coordinates
  {(D0) (D1) (D2) (D3) (D4) (D5) (D6) (D7) (D8) (D9) (D10) (D11) (D12) (D13)};
\draw[color=black] plot[smooth, tension=0.5] coordinates
  {(D0) (D1) (D2) (D3) (D4) (D5) (D6) (D7) (D8) (D9) (D10) (D11) (D12) (D13)};

\coordinate (E0) at (8.0, 0.0);
\coordinate (E1) at (7.1, 4.0);
\coordinate (E2) at (6.0, 4.5);
\coordinate (E3) at (4.5, 4.6);
\coordinate (E4) at (3.5, 4.5);
\coordinate (E5) at (2.5, 4.2);
\coordinate (E6) at (1.5, 3.8);
\coordinate (E7) at (0.5, 3.0);
\coordinate (E8) at (0.5, 2.0);
\coordinate (E9) at (0.8, 1.5);
\coordinate (E10) at (0.8, 1.5);
\coordinate (E11) at (1.0, 1.2);
\coordinate (E12) at (1.3, 0.4);
\coordinate (E13) at (0.8, 0.4);
\coordinate (E14) at (0.0, 1.5);
\coordinate (E15) at (0.0, 3.5);
\coordinate (E16) at (1.5, 4.8);
\coordinate (E17) at (3.0, 5.5);
\coordinate (E18) at (4.0, 5.6);
\coordinate (E19) at (9.0, 5.0);
\coordinate (E20) at (9.0, 0.0);

\fill[opacity=0.5, fill=white!80!red] plot[smooth, tension=0.5] coordinates
  {(E0) (E1) (E2) (E3) (E4) (E5) (E6) (E7) (E8) (E9) (E10) (E11) (E12) (E13) (E14) (E15) (E16) (E17) (E18) (E19) (E20)};
\draw[color=red] plot[smooth, tension=0.5] coordinates
  {(E0) (E1) (E2) (E3) (E4) (E5) (E6) (E7) (E8) (E9) (E10) (E11) (E12) (E13) (E14) (E15) (E16) (E17) (E18) (E19) (E20)};

\draw (3.5,0) node[below] {$\calK$};
\draw (8.5,0) node[below] {$S$};
\draw[->] (10,3) node[below,right] {$\int(\calB)$} .. controls (8.5,3.5) and (7.5,3.5) .. (6.5,2.5);
\draw[->] (-1,1) node[below,left] {$\int(\calK)$} .. controls (0,0.5) and (1,0) .. (1.4,0.2);

\draw (-3,0) -- (12,0) {};
\end{tikzpicture}
\caption{A $k$-bracket $\calB$ with $k$-clique $\calK$ and support $S$.}
\label{fig:k-bracket}
\end{figure}

\begin{lemma}\label{lem:surround}
Let\/ $\calF$ be a grounded family, $X,Y,Z\in\calF$, and\/ $X\cap Y\neq\emptyset$.
Let\/ $C_1$ and\/ $C_2$ be any two distinct arc-connected components of\/ $(\setR\times[0,+\infty))\setminus(X\cup Y)$.
Let\/ $z,z'\in Z\cap C_1$.
If every arc between\/ $z$ and\/ $z'$ within\/ $Z$ intersects\/ $C_2$, then every such arc intersects both\/ $X$ and\/ $Y$.
\end{lemma}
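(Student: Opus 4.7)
The plan is to argue the contrapositive: if some arc $A\subseteq Z$ from $z$ to $z'$ avoids $X$, then some arc in $Z$ from $z$ to $z'$ avoids $C_2$; the case when an arc avoids $Y$ is handled symmetrically (swapping the roles of $X$ and $Y$). So fix such an $A$, parametrized by a continuous injection $\phi\colon[0,1]\to Z$ with $\phi(0)=z$ and $\phi(1)=z'$.

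First observe that $A$ must meet $Y$. Otherwise $A$ would lie in the open set $(\setR\times[0,+\infty))\setminus(X\cup Y)$, and, being arc-connected, would lie in a single arc-connected component of this open set; since $z\in C_1$, that component is $C_1$, contradicting the hypothesis that $A$ meets $C_2$. Hence $A\cap Y$ is nonempty, and compactness gives us $t_1=\min\phi^{-1}(Y)$ and $t_2=\max\phi^{-1}(Y)$. The set $\phi([0,t_1))$ is connected, avoids $X\cup Y$, and contains $z\in C_1$, so it lies in $C_1$; thus $\phi([0,t_1])\subseteq C_1\cup Y$, and symmetrically $\phi([t_2,1])\subseteq C_1\cup Y$.

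Now invoke simplicity: since $Y,Z\in\calF$ and $\calF$ is simple, $Z\cap Y$ is arc-connected, so there is an arc $B\subseteq Z\cap Y$ joining $\phi(t_1)$ and $\phi(t_2)$. The concatenation $\phi([0,t_1])\cup B\cup\phi([t_2,1])$ is a path in $Z$ from $z$ to $z'$ entirely contained in $C_1\cup Y$, and hence disjoint from $C_2$. Applying the standard topological fact that a path between two distinct points in a Hausdorff space contains an arc between those same points, we extract from this concatenation an arc in $Z$ from $z$ to $z'$ that misses $C_2$, contradicting the hypothesis.

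The only delicate point is bookkeeping of which subarcs lie in which of $C_1$, $Y$, and $X$—the key pieces of machinery are arc-connectedness of $Z\cap Y$ (which is exactly the simplicity of $\calF$) and the extraction of an arc from a path, both of which have already been used implicitly in the proof of Lemma \ref{lem:curve}.
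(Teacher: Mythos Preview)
Your argument is correct and follows the same route as the paper's proof: assume an arc $A\subset Z$ misses $X$, take the first and last points of $A$ in $Y$, replace the middle by an arc in $Z\cap Y$ (using simplicity), and observe the result lies in $C_1\cup Y$ and hence misses $C_2$. The only quibble is expository: you announce a contrapositive but then twice write ``contradicting the hypothesis''---in the contrapositive framing the correct phrasing is simply ``and we are done, since we have exhibited an arc avoiding $C_2$.''
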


\begin{proof}
Suppose there is an arc $A\subset Z$ between $z$ and $z'$ such that $A\cap X=\emptyset$ or $A\cap Y=\emptyset$.
If $A\cap X=\emptyset$ and $A\cap Y=\emptyset$, then $A\subset C_1$, so $A\cap C_2=\emptyset$.
Now, suppose $A\cap X=\emptyset$ and $A\cap Y\neq\emptyset$.
Let $y$ and $y'$ be respectively the first and last points of $A$ in $Y$.
Since $Z\cap Y$ is arc-connected, there is an arc $A'$ in $Z$ that is simple with respect to $Y$ and goes along $A$ from $z$ to $y$, then to $y'$ inside $Y$, and finally along $A$ to $z'$.
It follows that $A'\subset C_1\cup Y$, so $A'\cap C_2=\emptyset$.
The case that $A\cap X\neq\emptyset$ and $A\cap Y=\emptyset$ is symmetric.
\end{proof}

\begin{corollary}\label{cor:clique}
Let\/ $\calF$ be a grounded family, $\calK$ be a\/ $k$-clique in\/ $\calF$, and\/ $X\in\calF$.
If\/ $x,y\in X\cap\int(\calK)$ (or\/ $x,y\in X\cap\ext(\calK)$) and every arc between\/ $x$ and\/ $y$ within\/ $X$ intersects\/ $\ext(\calK)$ (\/$\int(\calK)$, respectively), then\/ $X$ intersects every member of\/ $\calK$.
\end{corollary}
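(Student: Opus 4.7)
The plan is, for each $K \in \calK$, to apply Lemma~\ref{lem:surround} with a cleverly chosen partner $K' \in \calK \setminus \{K\}$ and with $Z = X$. Specifically, let $K_1$ and $K_2$ denote the two $\prec$-least members of $\calK$, and take $K' = K_1$ when $K \neq K_1$, and $K' = K_2$ otherwise. Since $\calK$ is a clique, $K \cap K' \neq \emptyset$. The points $x$ and $y$ lie in $\int(\calK)$, which is arc-connected and disjoint from $K \cup K'$, so they belong to a common component $C_1$ of $(\setR \times [0,+\infty)) \setminus (K \cup K')$. Provided $\ext(\calK)$ is contained in a different component $C_2 \neq C_1$ of the same complement, the hypothesis that every arc in $X$ from $x$ to $y$ meets $\ext(\calK) \subset C_2$ lets Lemma~\ref{lem:surround} conclude that every such arc meets both $K$ and $K'$; ranging over $K$ gives $X \cap K \neq \emptyset$ for every $K \in \calK$, as required.

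The crux is therefore the topological claim that $K \cup K'$ separates $\int(\calK)$ from $\ext(\calK)$ in the closed upper halfplane. To prove it, choose $r \in K \cap K'$; since $\base(K) \cap \base(K') = \emptyset$, we have $r \in \setR \times (0,+\infty)$. Take an arc inside $K$ from any point of $\base(K)$ to $r$, truncated at its final contact with the baseline, and similarly an arc inside $K'$ from $r$ to a point of $\base(K')$; concatenating these and passing to an arc contained in the resulting path yields an arc $A \subset K \cup K'$ from some $p \in \base(K)$ to some $p' \in \base(K')$ with $A \setminus \{p,p'\} \subset \setR \times (0,+\infty)$. The Jordan-type theorem for arcs in the halfplane recalled in Section~2 then splits $(\setR \times [0,+\infty)) \setminus A$ into a bounded and an unbounded component. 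By the choice of $K'$, the baseline segment strictly between $\base(K_1)$ and $\base(K_2)$ lies between $p$ and $p'$ along the axis, hence in the bounded component; since this segment is contained in $\int(\calK)$, the arc-connected set $\int(\calK)$ sits entirely in the bounded component. On the other hand, $\ext(\calK)$ reaches arbitrarily far along the axis and therefore lies in the unbounded component. Since $A \subset K \cup K'$, every arc-connected subset of $(\setR \times [0,+\infty)) \setminus (K \cup K')$ lies inside a single component of $(\setR \times [0,+\infty)) \setminus A$, so $\int(\calK)$ and $\ext(\calK)$ end up in distinct components of $(\setR \times [0,+\infty)) \setminus (K \cup K')$.

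The main obstacle is exactly this separation claim: removing only two of the $k$ clique members could in principle merge $\int(\calK)$ and $\ext(\calK)$, so the splitting by the full union $\bigcup \calK$ does not automatically descend to the smaller obstacle $K \cup K'$. Anchoring $K'$ at the two $\prec$-least members of $\calK$ is what lets us plant a distinguishing baseline segment inside $\int(\calK)$, so that the Jordan analysis of the extracted arc $A$ forces the desired separation. The symmetric case $x,y \in X \cap \ext(\calK)$ is handled by interchanging the roles of $\int(\calK)$ and $\ext(\calK)$ throughout.
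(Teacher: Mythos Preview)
Your proof is correct and follows essentially the same approach as the paper: the paper applies Lemma~\ref{lem:surround} to the pairs $(K_1,K_i)$ for $2\leq i\leq k$, invoking the fact that $\int(\calK)$ and $\ext(\calK)$ lie in distinct components of $(\setR\times[0,+\infty))\setminus(K_1\cup K_i)$, which is exactly your strategy (your choice of $K'\in\{K_1,K_2\}$ yields the same collection of pairs). The paper merely asserts the separation fact without argument, whereas you supply a full justification via an arc $A\subset K\cup K'$ and the halfplane Jordan theorem; this extra detail is sound and makes the argument self-contained.
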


\begin{proof}
Let $\calK=\{K_1,\ldots,K_k\}$ and $K_1\prec\int(\calK)\prec K_2\prec\cdots\prec K_k$.
The statement follows directly from Lemma \ref{lem:surround} and the fact that $\int(\calK)$ and $\ext(\calK)$ belong to distinct arc-connected components of $(\setR\times[0,+\infty))\setminus(K_1\cup K_i)$ for $2\leq i\leq k$.
\end{proof}

\begin{corollary}\label{cor:bracket}
Let\/ $\calF$ be a grounded family and\/ $\calB$ be a bracket in\/ $\calF$ with clique\/ $\calK$ and support\/ $S$.
Let\/ $X\in\calF$.
If\/ $X\cap\int(\calB)\neq\emptyset$ and\/ $X\cap\ext(\calB)\neq\emptyset$, then\/ $X$ intersects\/ $S$ or every member of\/ $\calK$.
\end{corollary}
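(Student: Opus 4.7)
The plan is to reduce to Corollary~\ref{cor:clique}. Suppose $X\cap S=\emptyset$ and pick $x\in X\cap\int(\calB)$ and $x'\in X\cap\ext(\calB)$. I first observe that $\int(\calB),\ext(\calB)\subset\ext(\calK)$: both avoid $\bigcup\calK\subset\bigcup\calB$ and so each lies in a component of $(\setR\times[0,+\infty))\setminus\bigcup\calK$; the unbounded $\ext(\calB)$ lies in $\ext(\calK)$, and $\int(\calB)$ contains the baseline segment between $\base(S)$ and the base of the $\prec$-nearest member of $\calK$, which also lies in $\ext(\calK)$. So $x,x'\in X\cap\ext(\calK)$, and by Corollary~\ref{cor:clique} it suffices to show that every arc in $X$ from $x$ to $x'$ meets $\int(\calK)$.

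The main step is to construct a Jordan arc $\gamma$ in $S\cup\int(\calK)$ separating $\int(\calB)$ from $\ext(\calB)$ in the halfplane. Assume $S\prec\calK$; the case $\calK\prec S$ is symmetric. The union $S\cup\int(\calK)$ is arc-connected, since $S$ and $\int(\calK)$ are arc-connected and $S\cap\int(\calK)\neq\emptyset$ by the definition of a bracket. I pick a baseline point $p_1\in\base(S)$ and a baseline point $p_2$ in the open segment of $\int(\calK)\cap(\setR\times\{0\})$ between $\base(K_1)$ and $\base(K_2)$, take an arc in $S\cup\int(\calK)$ from $p_1$ to $p_2$, and extract from it a simple sub-arc $\gamma$ with $\gamma\setminus\{p_1,p_2\}\subset\setR\times(0,+\infty)$ by standard topological trimming. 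The Jordan curve theorem for the halfplane from Section~2 then splits $(\setR\times[0,+\infty))\setminus\gamma$ into one bounded and one unbounded arc-connected component.

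Next I check that $\int(\calB)$ and $\ext(\calB)$ sit in opposite components. Both are disjoint from $\gamma$: they avoid $S$ (since $S\subset\bigcup\calB$) and $\int(\calK)$ (since they lie in $\ext(\calK)$). So each lies entirely in one component. The baseline segment of $\int(\calB)$, namely the open segment between $\base(S)$ and $\base(K_1)$, lies strictly between $p_1$ and $p_2$ on the baseline and hence in the bounded component; the unbounded $\ext(\calB)$ lies in the unbounded component.

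To finish, let $A\subset X$ be any arc from $x$ to $x'$. Because $A$ joins the two components of $(\setR\times[0,+\infty))\setminus\gamma$, it must cross $\gamma$. Since $A\subset X$ and $X\cap S=\emptyset$, we have $A\cap\gamma\subset\gamma\setminus S\subset\int(\calK)$, so $A$ meets $\int(\calK)$. Corollary~\ref{cor:clique} then yields $X\cap K_i\neq\emptyset$ for every $i$. The main technical obstacle I anticipate is the careful extraction of the Jordan arc $\gamma$ with interior strictly above the baseline, a routine but fiddly topological cleanup.
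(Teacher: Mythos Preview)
Your proposal is correct and follows essentially the same route as the paper: assume $X\cap S=\emptyset$, observe $x,x'\in\ext(\calK)$, argue via the Jordan curve theorem that any arc in $X$ from $x$ to $x'$ must meet $S\cup\int(\calK)$ and hence $\int(\calK)$, and then invoke Corollary~\ref{cor:clique}. The paper compresses the separation step into a single sentence (``By the Jordan curve theorem, every arc between $x$ and $x'$ within $X$ must intersect $S\cup\int(\calK)$''), whereas you spell it out by exhibiting an explicit arc $\gamma\subset S\cup\int(\calK)$; the topological cleanup you flag is indeed routine.
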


\begin{proof}
Let $x\in X\cap\int(\calB)$ and $x'\in X\cap\ext(\calB)$, and suppose $X\cap S=\emptyset$.
By the Jordan curve theorem, every arc between $x$ and $x'$ within $X$ must intersect $S\cup\int(\calK)$ and thus $\int(\calK)$.
Since $x,x'\in X\cap\ext(\calK)$, it follows from Corollary \ref{cor:clique} that $X$ intersects every member of $\calK$.
\end{proof}

\section{Proof of Proposition \ref{prop:main}}

The proof goes by induction on $k$.
Proposition \ref{prop:main} holds trivially for $k=1$ with $\xi_1=1$.
Therefore, we assume that $k\geq 2$ and that the statement of the proposition holds for $k-1$.
This context of the induction step is maintained throughout the entire remaining part of the paper.
A typical application of the induction hypothesis looks as follows: if $\calF$ is a grounded family with $\omega(\calF)\leq k$, $\calG\subset\calF$, and there is $X\in\calF\setminus\calG$ intersecting all members of $\calG$, then $\omega(\calG)\leq k-1$ and thus $\chi(\calG)\leq\xi_{k-1}$.

Define $\beta_k=8k\xi_{k-1}^2$, $\delta_{k,k}=0$, $\delta_{k,j}=\beta_k+2\delta_{k,j+1}+2\xi_{k-1}(k\xi_{k-1}+k+2)+2$ for $k-1\geq j\geq 0$, and finally $\xi_k=2^{k+2}(\delta_{k,0}+2\xi_{k-1}+1)$.

We say that a grounded set $X$ (a grounded family $\calX$) is \emph{surrounded} by a set $S$ if $X$ (every member of $\calX$, respectively) is disjoint from $S\cup\ext(S)$.
For a set $S$ and a grounded set $R$ such that $\base(R)$ is surrounded by $S$, let $\cut(R,S)$ denote the closure of the unique arc-connected component of $R\setminus S$ containing $\base(R)$.
For a set $S$ and a grounded family $\calR$ of sets whose bases are surrounded by $S$, let $\cut(\calR,S)=\{\cut(R,S)\colon R\in\calR\}$.

First, we present a technical lemma, which generalizes similar statements from \cite{McG00} (Lemma 3.2) and \cite{Suk14} (Lemma 4.1), and which we will prove in Section \ref{sec:lemma}.
Loosely speaking, it says that one can color properly, with the number of colors bounded in terms of $k$, all the members of $\calF$ surrounded by a set $S$ which intersect $\cut(R,S)$ for any set $R\in\calF$ intersecting $S$.

\begin{lemma}\label{lem:dist2}
Let\/ $S$ be a compact set and\/ $\calR\cup\calD$ be a grounded family with the following properties:
\begin{itemize}
\item the base of every member of\/ $\calR$ is surrounded by\/ $S$,
\item every member of\/ $\calR$ intersects\/ $S$,
\item $\calD$ is surrounded by\/ $S$,
\item every member of\/ $\calD$ intersects\/ $\bigcup\cut(\calR,S)$,
\item $\omega(\calR\cup\calD)\leq k$.
\end{itemize}
It follows that\/ $\chi(\calD)\leq\beta_k$.
\end{lemma}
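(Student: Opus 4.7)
The plan is to combine the outer induction (on $k$) with a Gy\'arf\'as-style decomposition adapted to the cut structure, proceeding in three steps.

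Step~1 (per-representative coloring via induction). For every $R \in \calR$, put $\calD_R = \{D \in \calD : D \cap \cut(R,S) \neq \emptyset\}$. Because $\cut(R,S) \subseteq R$, every member of $\calD_R$ intersects $R$; a $k$-clique inside $\calD_R$ together with $R$ would give a $(k{+}1)$-clique in $\calR \cup \calD$, contradicting $\omega(\calR \cup \calD) \leq k$. Hence $\omega(\calD_R) \leq k-1$, and the induction hypothesis yields $\chi(\calD_R) \leq \xi_{k-1}$. The fourth bullet of the hypotheses guarantees $\calD = \bigcup_{R \in \calR} \calD_R$.

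Step~2 (reduction to a representatives graph). Assign to each $D \in \calD$ a representative $R(D) \in \calR$ with $D \in \calD_{R(D)}$, chosen $\prec$-extremally, and write $\calD^R = \{D : R(D) = R\}$. Each class satisfies $\chi(\calD^R) \leq \xi_{k-1}$. The task is then to color properly, with at most $8k\,\xi_{k-1}$ colors, the graph on $\calR$ whose edges are those pairs $R_1 R_2$ admitting $D_1 \in \calD^{R_1}$ and $D_2 \in \calD^{R_2}$ with $D_1 \cap D_2 \neq \emptyset$; taking the product of the two colorings then gives $\chi(\calD) \leq 8k\,\xi_{k-1}^2 = \beta_k$. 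The key topological input is Corollary~\ref{cor:bracket}: whenever $R_1 \prec R_2$ and $D_1 \in \calD^{R_1}$ meets $D_2 \in \calD^{R_2}$, one can trace an arc inside $D_1 \cup D_2$ from $\cut(R_1,S)$ to $\cut(R_2,S)$; this arc together with a path in $S$ encloses a region, and any $\calR$-members with bases in that region combine with $S$ into a bracket whose support would force $D_1$ or $D_2$ to meet an entire $(k{-}1)$-clique in $\calR$, controlling how many representatives can interact at once.

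Step~3 (the main obstacle). The hard part is bounding the chromatic number of the representatives graph. Here I would apply Lemma~\ref{lem:graph} to $\calR$ and split it into $\prec$-consecutive blocks in which the interaction pattern stabilizes, then invoke Corollary~\ref{cor:clique} and the bracket analysis from Step~2 to show that long chains of intersecting $\calD^R$-classes force a $k$-clique in $\calR \cup \calD$, which is forbidden. The numerical constant $8k$ arises as the small factor in Lemma~\ref{lem:graph} multiplied by the $k$ distinct "levels" at which the bracket construction can plant a forbidden clique; a cleaner formulation may improve constants, but the topological bookkeeping needed to convert a meta-edge into an honest bracket of the right type, via the Jordan curve theorem in the halfplane and Lemma~\ref{lem:curve} to straighten the witnessing arcs, is where the bulk of the work lies.
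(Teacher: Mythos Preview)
Your Step~1 is correct and corresponds to the easy observation that each $\calD_R$ has $\omega\leq k-1$, but this is not where the difficulty lies, and Steps~2--3 do not supply the missing ideas.

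First, you never reduce to the case where the cuts $\cut(R,S)$ are pairwise disjoint. In the paper this is a separate preliminary step: the intersection graph of $\cut(\calR,S')$ is a comparability graph (hence perfect), so it can be $k$-colored, and one works within a single color class. Without this reduction your representatives graph has no useful structure, because two sets $R_1,R_2\in\calR$ whose cuts intersect can already share members of $\calD$ in complicated ways.

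Second, once the $R_i$ are pairwise disjoint, the paper does \emph{not} argue via a representatives graph at all. Instead it introduces the \emph{leftclip}/\emph{rightclip} operation: for $D\in\calD$ whose first intersected set is $R_{i+1}$, one removes the portion of $D$ lying in the gap $I_i$ to the left of $R_{i+1}$. The crucial fact is that the clipped families are simple, have clique number at most $k-1$ (any clique in $\leftclip(\calD)$ shares a common $R_j$), and after a nontrivial topological normalization can be made grounded so that the induction hypothesis applies. Two successive applications of the induction hypothesis (one for $\rightclip$, one for $\leftclip$) leave a family in which any two intersecting sets overlap only in a single gap $I_i$, and a planarity argument then gives the final factor of~$4$. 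This is where the product $8\xi_{k-1}^2$ comes from; your decomposition as $(8k\xi_{k-1})\cdot\xi_{k-1}$ reflects a different strategy that you have not carried out.

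Third, the bracket argument you sketch in Steps~2--3 does not apply. Corollary~\ref{cor:bracket} concerns a set that meets both $\int(\calB)$ and $\ext(\calB)$, but every $D\in\calD$ is surrounded by $S$ and hence never reaches $\ext$ of any bracket built from members of $\calR$ and $S$. There is no evident way to force a $(k{-}1)$-clique in $\calR$ from a long chain of meta-edges, and the claimed bound of $8k\xi_{k-1}$ on the representatives graph is unsupported.
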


Suppose for the sake of contradiction that there is a grounded family $\calF$ with $\omega(\calF)\leq k$ and $\chi(\calF)>\xi_k=2^{k+2}(\delta_{k,0}+2\xi_{k-1}+1)$.
A repeated application of Lemma \ref{lem:level} yields a sequence of families $\calF=\calF_{k+1}\supset\calF_k\supset\cdots\supset\calF_0$ such that $\calF_i$ is externally supported in $\calF_{i+1}$ and $\chi(\calF_i)>2^{i+1}(\delta_{k,0}+2\xi_{k-1}+1)$, for $0\leq i\leq k$.
The following claim is the core of the proof.

\begin{claim}\label{cla:main}
For\/ $0\leq j\leq k$, there are families\/ $\calS,\calG\subset\calF_j$ and sets\/ $S_1,\ldots,S_j\in\calS$ with the following properties:
\begin{enumerate}
\item $\calG$ is surrounded by\/ $\bigcup\calS$,
\item $\chi(\calG)>\delta_{k,j}$,
\item the sets\/ $S_1,\ldots,S_j$ pairwise intersect,
\item every member of\/ $\calF$ intersecting\/ $\ext(\calS)$ and some member of\/ $\calG$ also intersects each of\/ $S_1,\ldots,S_j$.
\end{enumerate}
\end{claim}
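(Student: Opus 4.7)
The plan is to prove Claim~\ref{cla:main} by induction on $j$, climbing the chain $\calF_0\subset\cdots\subset\calF_k$ one externally supported level at a time. For the base case $j=0$, properties~(iii) and~(iv) are vacuous because the list $S_1,\ldots,S_j$ is empty, so I take $\calS=\emptyset$ and $\calG=\calF_0$; property~(i) imposes no constraint and property~(ii) holds since $\chi(\calF_0)>2(\delta_{k,0}+2\xi_{k-1}+1)>\delta_{k,0}$.

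For the inductive step at level $j\leq k-1$, I would first apply the inductive hypothesis to $\calF_j$, obtaining $\calS_0,\calG_0\subset\calF_j$ and sets $S_1,\ldots,S_j\in\calS_0$ with $\chi(\calG_0)>\delta_{k,j}$. Since $\calF_j$ is externally supported in $\calF_{j+1}$, every $G\in\calG_0$ admits a witness $Y_G\in\calF_{j+1}$ meeting both $G$ and $\ext(\calF_j)\subseteq\ext(\calS_0)$; property~(iv) at level $j$ then forces $Y_G$ to meet each $S_i$, so $\{Y_G,S_1,\ldots,S_j\}$ is a $(j+1)$-clique in $\calF$. The decisive move is to collapse the cloud of witnesses $\{Y_G\}$ into a single new set $S_{j+1}$ that serves a chromatically large subfamily.

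To perform this collapse, I would apply Lemma~\ref{lem:graph} to $\calG_0$ to extract a subfamily $\calH$ in which any two intersecting members are separated by a chromatically thick gap, then use a further application of Lemma~\ref{lem:level} (or a direct use of external support inside $\calH$) to pick out a distinguished $H$, and set $S_{j+1}=Y_H$. The new $\calS$ would be $\calS_0\cup\{S_{j+1}\}$ and the new $\calG$ a subfamily of $\calG_0$ lying in the pocket cut out by $\{S_1,\ldots,S_j,S_{j+1}\}$ and disjoint from $S_{j+1}$. Property~(iii) is then built in, property~(i) is preserved because $\bigcup\calS$ only grows, and the various terms in the recurrence $\delta_{k,j}=\beta_k+2\delta_{k,j+1}+2\xi_{k-1}(k\xi_{k-1}+k+2)+2$ match the anticipated chromatic costs: the factor of $2$ is the loss in Lemma~\ref{lem:graph}, the additive $2\xi_{k-1}(k\xi_{k-1}+k+2)$ comes from invoking the induction on $k$ for families forced to meet every element of some $(j+1)$-clique, and the additive $\beta_k$ comes from Lemma~\ref{lem:dist2}.

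The main obstacle is establishing property~(iv) at the new level: that every $X\in\calF$ meeting $\ext(\calS)$ and some $G\in\calG$ actually intersects $S_{j+1}$. Level-$j$ property~(iv) already forces $X$ to meet each of $S_1,\ldots,S_j$, so Corollary~\ref{cor:bracket} applied to the bracket with support $S_{j+1}$ and clique $\{S_1,\ldots,S_j\}$ does not directly yield $X\cap S_{j+1}\neq\emptyset$, because its alternative ``$X$ meets every member of $\calK$'' is already known to hold. The remedy is to discard a chromatically small subfamily of exceptional $G$'s whose would-be counterexamples are bounded by Lemma~\ref{lem:dist2} (applied with $S$ a suitable envelope built from $S_{j+1}$ and $\bigcup\calS_0$, and $\calR$ built from witnesses that fail the bracket test), at a cost of exactly $\beta_k$. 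Carrying out this simultaneous topological pinning and chromatic bookkeeping, with Lemmas~\ref{lem:graph},~\ref{lem:level}, and~\ref{lem:dist2} acting in concert, is the delicate heart of the argument.
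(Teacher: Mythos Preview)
Your base case is broken: with $\calS=\emptyset$ we have $\ext(\emptyset)=\setR\times[0,+\infty)$, so no grounded set is surrounded by $\bigcup\calS$, and (i) fails rather than being vacuous. This is easily repaired by doing what the paper does: apply Lemma~\ref{lem:graph} to $\calF_0$, take two intersecting $H_1,H_2$, set $\calS=\{H_1,H_2\}$, and let $\calG$ be the members of $\calF_0(H_1,H_2)$ disjoint from $H_1\cup H_2$.

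The real gap is in the inductive step, and you have put your finger on it yourself without seeing the fix. You try to form the bracket from the support $S_{j+1}$ together with the \emph{old} clique $\{S_1,\ldots,S_j\}$, and then Corollary~\ref{cor:bracket} is useless because its alternative ``$X$ meets every member of $\calK$'' is already granted by level-$j$ property~(iv). The paper's decisive idea is to manufacture a \emph{fresh} $k$-clique $\calK$ inside $\calG'$: one applies Lemma~\ref{lem:graph} to a middle block $\calY$ of $\calG'$ to get $\calH$ with $\chi(\calH)>\xi_{k-1}$, which (by the outer induction hypothesis) forces $\omega(\calH)\geq k$ and hence a $k$-clique $\calK\subset\calH$; a set $P\in\int(\calK)$ then triggers the external support producing $S_j$. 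The bracket is $\calK\cup\{S_j\}$ (so $\calS=\calS'\cup\calK\cup\{S_j\}$, not just $\calS'\cup\{S_j\}$), and now Corollary~\ref{cor:bracket} bites: any $X\in\calF$ meeting $\ext(\calS)\subset\ext(\calB)$ and some $G\in\calG\subset\int(\calB)$ must meet $S_j$ or all of $\calK$, but the latter would create a $(k{+}1)$-clique in $\calF$, contradiction. Your proposed rescue via Lemma~\ref{lem:dist2} does not do this job; in the paper that lemma serves a different purpose, namely to discard in advance the members of $\calG'$ that touch $\cut(R,\bigcup\calS')$ for sets $R$ crossing $\bigcup\calS'$, which is exactly what guarantees that $\base(S_j)$ lies outside the connected block $\calG''$ (so that $S_j$ is a legitimate support for the bracket). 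The terms of the recurrence for $\delta_{k,j}$ then track: $\beta_k$ for Lemma~\ref{lem:dist2}, $2(\delta_{k,j}+(k{+}1)\xi_{k-1}+1)$ for the two outer blocks $\calX,\calZ$ (one of which becomes $\calG$ after stripping neighbours of $\calK\cup\{S_j\}$), and $2\xi_{k-1}(k\xi_{k-1}+1)$ for the Lemma~\ref{lem:graph} call on $\calY$.
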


\begin{proof}
The proof goes by induction on $j$.
First, let $j=0$.
Apply Lemma \ref{lem:graph} to find $\calH\subset\calF_0$ such that $\chi(\calH)>1$ and for any intersecting $H_1,H_2\in\calH$ we have $\chi(\calF_0(H_1,H_2))>\delta_{k,0}+2\xi_{k-1}$.
Since $\chi(\calH)>1$, such two intersecting $H_1,H_2\in\calH$ exist.
Let $\calS=\{H_1,H_2\}$ and $\calG$ be the family of those members of $\calF_0(H_1,H_2)$ that are disjoint from $H_1\cup H_2$.
It is clear that (i) holds.
Since the members of $\calF_0(H_1,H_2)$ intersecting $H_1\cup H_2$ have chromatic number at most $2\xi_{k-1}$, we have $\chi(\calG)>\delta_{k,0}$, so (ii) holds.
The conditions (iii) and (iv) are satisfied vacuously.

Now, assume that $j\geq 1$ and the claim holds for $j-1$, that is, there are families $\calS',\calG'\subset\calF_{j-1}$ and sets $S_1,\ldots,S_{j-1}\in\calS'$ satisfying (i)--(iv).
Let
\begin{align*}
\calR&=\bigl\{R\in\calF\colon\base(R)\text{ is surrounded by }\tbigcup\calS'\text{ and }R\cap\tbigcup\calS'\neq\emptyset\bigr\},\\
\calD&=\bigl\{D\in\calG'\colon D\cap\tbigcup\cut(\calR,\tbigcup\calS')\neq\emptyset\bigr\}.
\end{align*}
It follows from Lemma \ref{lem:dist2} that $\chi(\calD)\leq\beta_k$ and thus $\chi(\calG'\setminus\calD)>2\delta_{k,j}+2\xi_{k-1}(k\xi_{k-1}+k+2)+2$.
Since the chromatic number of a graph is the maximum chromatic number of its connected component, there is $\calG''\subset\calG'\setminus\calD$ such that the intersection graph of $\calG''$ is connected and $\chi(\calG'')>2\delta_{k,j}+2\xi_{k-1}(k\xi_{k-1}+k+2)+2$.
Partition $\calG''$ into three subfamilies $\calX,\calY,\calZ$ so that $\calX\prec\calY\prec\calZ$ and $\chi(\calX)=\chi(\calZ)=\delta_{k,j}+(k+1)\xi_{k-1}+1$.
It follows that $\chi(\calY)>2\xi_{k-1}(k\xi_{k-1}+1)$.
Apply Lemma \ref{lem:graph} to find $\calH\subset\calY$ such that $\chi(\calH)>\xi_{k-1}$ and for any intersecting $H_1,H_2\in\calH$ we have $\chi(\calY(H_1,H_2))>k\xi_{k-1}$.
Since $\chi(\calH)>\xi_{k-1}$, there is a $k$-clique $\calK\subset\calH$.
The members of $\calY$ intersecting $\bigcup\calK$ have chromatic number at most $k\xi_{k-1}$, so there is $P\in\calY$ that is contained in $\int(\calK)$.
Since $\calF_{j-1}$ is externally supported in $\calF_j$, there is $S_j\in\calF_j$ such that $S_j\cap P\neq\emptyset$ and $S_j\cap\ext(\calS')\supset S_j\cap\ext(\calF_{j-1})\neq\emptyset$.
Therefore, since $\calS'$ and $\calG'$ satisfy (iv), $S_j$ intersects each of $S_1,\ldots,S_{j-1}$ and thus (iii) holds for $S_1,\ldots,S_j$.

We show that $S_j\prec\calG''$ or $\calG''\prec S_j$.
Suppose that neither of these holds.
It follows that $\base(S_j)$ is surrounded by $\bigcup\calS'$, which yields $S_j\in\calR$.
Moreover, $\base(S_j)$ is surrounded by $\bigcup\calG''$, as the intersection graph of $\calG''$ is connected.
Therefore, we have $\cut(S_j,\bigcup\calS')\cap\bigcup\calG''\neq\emptyset$, so there is $X\in\calG''$ such that $X\cap\cut(S_j,\bigcup\calS')\neq\emptyset$.
This means that $X\in\calD$, which contradicts the definition of $\calG''$.

Now, we have $S_j\cap\int(\calK)\supset S_j\cap P\neq\emptyset$ and $S_j\prec\calK$ or $\calK\prec S_j$, so the $k$-clique $\calK$ and the support $S_j$ form a $k$-bracket.
Let $\calS=\calS'\cup\calK\cup\{S_j\}$.
If $S_j\prec\calG''$, then $S_j\prec\calX\prec\calK$.
In this case, let $\calG$ be the family of those members of $\calX$ that are disjoint from $\bigcup\calK\cup S_j$.
It is clear that (i) holds.
Since $\chi(\calX)>\delta_{k,j}+(k+1)\xi_{k-1}$ and the members of $\calX$ intersecting $\bigcup\calK\cup S_j$ have chromatic number at most $(k+1)\xi_{k-1}$, we have $\chi(\calG)>\delta_{k,j}$, so (ii) holds.
Since $\ext(\calS)\subset\ext(\calK\cup\{S_j\})$, it follows from Corollary \ref{cor:bracket} that every member of $\calF$ intersecting $\ext(\calS)$ and some member of $\calG$ intersects $S_j$.
Hence (iv) holds.
If $\calG''\prec S_j$, then let $\calG$ be the family of those members of $\calZ$ that are disjoint from $\bigcup\calK\cup S_j$.
An analogous argument shows that (i), (ii), and (iv) are satisfied.
\end{proof}

Let $\calS$, $\calG$, and $S_1,\ldots,S_k$ be as guaranteed by Claim \ref{cla:main} for $j=k$.
By (ii), we have $\chi(\calG)>0$, so there is $P\in\calG$.
Since $\calF_k$ is externally supported in $\calF$, there is $S_{k+1}\in\calF$ such that $S_{k+1}\cap P\neq\emptyset$ and $S_{k+1}\cap\ext(\calS)\neq\emptyset$.
By (iii) and (iv), we conclude that $S_1,\ldots,S_{k+1}$ pairwise intersect.
This contradicts the assumption that $\omega(\calF)\leq k$, thus completing the proof of Proposition \ref{prop:main}.

\section{Proof of Lemma \ref{lem:dist2}}\label{sec:lemma}

The proof of Lemma \ref{lem:dist2} goes along similar lines to the proof of Lemma 4.1 in \cite{Suk14}.
Since $\calD$ is surrounded by $S$, there is an arc $S'\subset S$ such that $\calD$ is surrounded by $S'$.
We can assume without loss of generality that $\base(S')=\{p,q\}$ for some points $p$ and $q$ on the baseline such that $p\prec\calD\prec q$.
For every $R\in\calR$ we have $\cut(R,S)\subset\cut(R,S')$.
Hence every member of $\calD$ intersects $\bigcup\cut(R,S')$ (see Figure \ref{fig:cutR}).

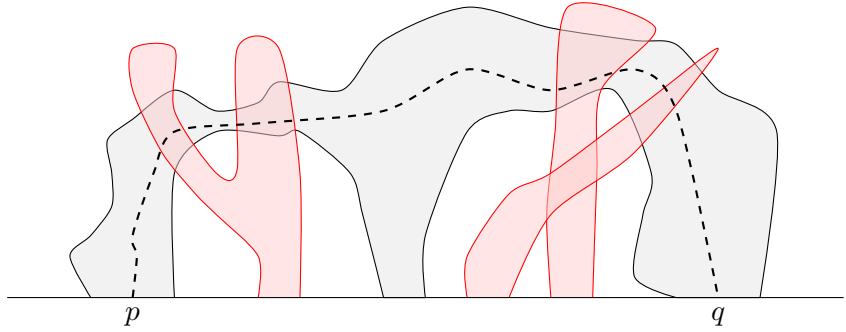
\begin{figure}[t]
\centering
\begin{tikzpicture}[scale=.55]
\coordinate (A0) at (0.0, 0.0);
\coordinate (A1) at (-0.5, 1.0);
\coordinate (A2) at (0.0, 1.5);
\coordinate (A3) at (0.5, 2.2);
\coordinate (A4) at (0.5, 3.0);
\coordinate (A5) at (0.4, 3.8);
\coordinate (A6) at (1.0, 4.3);
\coordinate (A7) at (2.0, 5.0);
\coordinate (A8) at (3.0, 4.5);
\coordinate (A9) at (4.0, 4.7);
\coordinate (A10) at (4.5, 5.2);
\coordinate (A11) at (6.0, 5.0);
\coordinate (A12) at (7.0, 6.2);
\coordinate (A13) at (9.0, 7.0);
\coordinate (A14) at (11.0, 6.5);
\coordinate (A15) at (13.0, 6.2);
\coordinate (A16) at (14.0, 6.1);
\coordinate (A17) at (15.0, 5.0);
\coordinate (A18) at (16.4, 4.0);
\coordinate (A19) at (16.0, 0.0);

\coordinate (B0) at (14.0, 0.0);
\coordinate (B1) at (13.0, 0.5);
\coordinate (B2) at (13.2, 2.0);
\coordinate (B3) at (13.4, 3.0);
\coordinate (B4) at (12.5, 5.0);
\coordinate (B5) at (11.0, 4.5);
\coordinate (B6) at (10.0, 4.5);
\coordinate (B7) at (9.0, 4.0);
\coordinate (B8) at (8.0, 1.5);
\coordinate (B9) at (8.0, 0.0);

\coordinate (C0) at (7.0, 0.0);
\coordinate (C1) at (6.5, 2.0);
\coordinate (C2) at (6.2, 3.0);
\coordinate (C3) at (5.0, 4.0);
\coordinate (C4) at (4.5, 3.9);
\coordinate (C5) at (3.0, 4.0);

\coordinate (Z4) at (2,3);
\coordinate (Z5) at (2,0);

\fill[opacity=0.5, fill=white!90!black] plot[smooth, tension=0.5] coordinates
  {(A0) (A1) (A2) (A3) (A4) (A5) (A6) (A7) (A8) (A9) (A10) (A11) (A12) (A13) (A14) (A15) (A16) (A17) (A18) (A19)}
  -- (B0) plot[smooth,tension=0.5] coordinates
  {(B0) (B1) (B2) (B3) (B4) (B5) (B6) (B7) (B8) (B9)}
  -- (C0) plot[smooth,tension=0.5] coordinates
  {(C0) (C1) (C2) (C3) (C4) (C5) (Z4) (Z5)};
\draw[color=black] plot[smooth, tension=0.5] coordinates
  {(A0) (A1) (A2) (A3) (A4) (A5) (A6) (A7) (A8) (A9) (A10) (A11) (A12) (A13) (A14) (A15) (A16) (A17) (A18) (A19)}
  -- (B0) plot[smooth,tension=0.5] coordinates
  {(B0) (B1) (B2) (B3) (B4) (B5) (B6) (B7) (B8) (B9)}
  -- (C0) plot[smooth,tension=0.5] coordinates
  {(C0) (C1) (C2) (C3) (C4) (C5) (Z4) (Z5)};

\coordinate (S0) at (1.0, 0.0);
\coordinate (S1) at (1.1, 1.0);
\coordinate (S2) at (1.0, 1.5);
\coordinate (S3) at (1.5, 3.0);
\coordinate (S4) at (2.0, 4.0);
\coordinate (S5) at (4.0, 4.2);
\coordinate (S6) at (7.0, 4.5);
\coordinate (S7) at (9.0, 5.5);
\coordinate (S8) at (11.0, 5.0);
\coordinate (S9) at (13.0, 5.5);
\coordinate (S10) at (14.0, 4.5);
\coordinate (S11) at (15.0, 0.0);

\draw (S0) node[below] {$p$};
\draw (S11) node[below] {$q$};

\coordinate (R0) at (4.0, 0.0);
\coordinate (R1) at (4.0, 1.0);
\coordinate (R2) at (3.0, 2.0);
\coordinate (R3) at (2.5, 2.5);
\coordinate (R4) at (1.7, 3.5);
\coordinate (R5) at (1.0, 5.0);
\coordinate (R6) at (1.0, 6.0);
\coordinate (R7) at (2.0, 6.0);
\coordinate (R8) at (2.0, 4.5);
\coordinate (R9) at (3.0, 3.0);
\coordinate (R10) at (3.5, 3.1);
\coordinate (R11) at (3.5, 6.0);
\coordinate (R12) at (4.5, 6.0);
\coordinate (R13) at (5.0, 3.0);
\coordinate (R14) at (5.0, 0.0);

\fill[opacity=0.5, fill=white!80!red] plot[smooth, tension=0.5] coordinates
  {(R0) (R1) (R2) (R3) (R4) (R5) (R6) (R7) (R8) (R9) (R10) (R11) (R12) (R13) (R14)};
\draw[color=red] plot[smooth, tension=0.5] coordinates
  {(R0) (R1) (R2) (R3) (R4) (R5) (R6) (R7) (R8) (R9) (R10) (R11) (R12) (R13) (R14)};

\coordinate (T0) at (11.0, 0.0);
\coordinate (T1) at (10.9, 2.0);
\coordinate (T2) at (11.1, 4.0);
\coordinate (T3) at (11.4, 7.0);
\coordinate (T4) at (13.5, 6.5);
\coordinate (T5) at (12.2, 5.0);
\coordinate (T6) at (12.1, 3.0);
\coordinate (T7) at (12.0, 0.0);

\fill[opacity=0.5, fill=white!80!red] plot[smooth, tension=0.5] coordinates
  {(T0) (T1) (T2) (T3) (T4) (T5) (T6) (T7)};
\draw[color=red] plot[smooth, tension=0.5] coordinates
  {(T0) (T1) (T2) (T3) (T4) (T5) (T6) (T7)};

\coordinate (U0) at (9.0, 0.0);
\coordinate (U1) at (9.0, 1.0);
\coordinate (U2) at (10.0, 2.5);
\coordinate (U3) at (11.0, 3.0);
\coordinate (U4) at (13.0, 4.5);
\coordinate (U5) at (15.0, 6.0);
\coordinate (U6) at (16.0, 6.0);
\coordinate (U6) at (13.0, 3.5);
\coordinate (U7) at (11.0, 2.0);
\coordinate (U8) at (10.0, 0.0);

\fill[opacity=0.5, fill=white!80!red] plot[smooth, tension=0.5] coordinates
  {(U0) (U1) (U2) (U3) (U4) (U5) (U6) (U7) (U8)};
\draw[color=red] plot[smooth, tension=0.5] coordinates
  {(U0) (U1) (U2) (U3) (U4) (U5) (U6) (U7) (U8)};

\path[draw, thick, dashed] plot[smooth, tension=0.5] coordinates
  {(S0) (S1) (S2) (S3) (S4) (S5) (S6) (S7) (S8) (S9) (S10) (S11)};

\draw (-2,0) -- (18,0) {};
\end{tikzpicture}
\caption{The setting of the proof of Lemma \ref{lem:dist2}: the set $S$ with a dashed arc $S'$ and three sets from $\calR$.}
\label{fig:cutR}
\end{figure}

\begin{claim}\label{cla:pillars}
$\chi(\cut(\calR,S'))\leq k$.
\end{claim}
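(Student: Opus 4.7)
My plan is to establish the stronger statement that the intersection graph of $\cut(\calR,S')$ is an interval graph; combined with the immediate bound $\omega(\cut(\calR,S'))\leq\omega(\calR\cup\calD)\leq k$ (which follows from $\cut(R,S')\subseteq R$, so any pairwise-intersecting family of cuts produces a pairwise-intersecting family of members of $\calR\cup\calD$), this yields $\chi\leq\omega\leq k$ since interval graphs satisfy $\chi=\omega$.

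To set up the interval structure, I would exploit the topology of the closed region $\overline{\Omega}$ bounded by $S'$ and the baseline segment $[p,q]$. By the Jordan--Sch\"onflies theorem, $\overline{\Omega}$ is homeomorphic to a closed disk, with $[p,q]$ and $S'$ as two boundary arcs. Each $\cut(R,S')$ is a compact arc-connected subset of $\overline{\Omega}$ meeting $[p,q]$ exactly at the interval $\base(R)$ (disjoint across different $R$) and possibly meeting $S'$ at a compact set $T(R)=\cut(R,S')\cap S'$. I plan to associate to each $R$ an interval $I_R$ on a fixed parameterization of $S'$, derived from $T(R)$ (specifically, the smallest subarc of $S'$ whose parameter interval contains all parameters of $T(R)$), with a separate simpler handling of cuts satisfying $T(R)=\emptyset$, which are ``small'' in that they stay away from $S'$ and can be slotted into the ordering via the leftmost/rightmost baseline point. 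I would then show that the intersection graph of $\cut(\calR,S')$ coincides with the interval graph of the family $\{I_R\}$.

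The main obstacle is proving this equivalence between cut intersections and interval overlaps. The easier direction (overlap of $I_{R_1}$ and $I_{R_2}$ forces $\cut(R_1,S')\cap\cut(R_2,S')\neq\emptyset$) follows from a Jordan curve argument in the disk $\overline{\Omega}$: choose an arc inside each cut joining a boundary point on $\base(R_i)$ to a point of $T(R_i)$ by arc-connectedness; if the four boundary endpoints interleave along $\partial\overline{\Omega}$, the two arcs must cross, as in the argument at the end of Section~2. The harder direction — that every pair of intersecting cuts has overlapping intervals $I_R$ — is where the bulk of the topological work lies. This requires using that $\cut(R,S')$ is specifically a single arc-connected component of $R\setminus S'$ containing $\base(R)$ rather than an arbitrary subset, together with the planarity of $\overline{\Omega}$, to rule out configurations in which two cuts meet deep in the interior while their traces on $S'$ remain disjoint; the essential ingredient will be to show that any internal meeting point can be dragged outward (via an arc through the cuts) to produce interleaved boundary endpoints.
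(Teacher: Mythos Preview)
Your plan has a genuine gap: the intersection graph of $\cut(\calR,S')$ need not be an interval graph, so the ``harder direction'' you identify cannot be completed. Here is a concrete obstruction. Work inside a large rectangle bounded above by $S'$, and take four grounded sets $A\prec B\prec C\prec D$ assembled from axis-parallel rectangles as follows: $A$ is an L-shape whose horizontal bar at height $5$ spans the full width; $D$ is a vertical bar on the far right rising to meet that bar; $C$ is a low L-shape (entirely below height $4$) whose horizontal arm reaches right to meet $D$; and $B$ has a vertical stem that pierces $A$'s bar together with a low horizontal arm reaching right to meet $C$ but stopping short of $D$. These four sets form a simple family with pairwise disjoint bases, and their intersection graph is the $4$-cycle $A$--$B$--$C$--$D$--$A$ (with $A\cap C=\emptyset$ and $B\cap D=\emptyset$). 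Now attach to each set a thin antenna up to $S'$, routing $C$'s antenna around the right end of $A$'s bar so that no new intersections are created; then each set meets $S'$ and equals its own cut. The four traces $T(\cdot)$ on $S'$ are pairwise disjoint intervals, so your $I_R$ are pairwise disjoint, yet the intersection graph of the cuts is $C_4$, which is not an interval graph. Thus two cuts can meet in the interior while their $S'$-traces (and hence your intervals) stay disjoint; no amount of ``dragging outward'' will produce interleaved boundary endpoints here.

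The paper's proof avoids this by aiming for something weaker but sufficient: it shows the intersection graph of $\cut(\calR,S')$ is an incomparability graph, hence perfect. Declare $R_1'<R_2'$ when $R_1'\prec R_2'$ and $R_1'\cap R_2'=\emptyset$; transitivity follows from the Jordan-curve observation that if $R_1'\prec R_2'\prec R_3'$ and $R_1'$ meets $R_3'$, then $R_2'$ must meet $R_1'\cup R_3'$. Perfection then gives $\chi=\omega$, and your argument that $\omega(\cut(\calR,S'))\leq k$ is correct and is exactly what finishes the proof.
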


\begin{proof}
Consider a relation $<$ on $\cut(\calR,S')$ defined as follows: $R_1'<R_2'$ if and only if $R_1'\prec R_2'$ and $R_1'\cap R_2'=\emptyset$.
It is clear that $<$ is irreflexive and antisymmetric.
It is also transitive, which follows from the fact that if $R_1',R_2',R_3'\in\calR$, $R_1'\prec R_2'\prec R_3'$, and $R_1'\cap R_3'\neq\emptyset$, then $R_2'\cap(R_1'\cup R_3')\neq\emptyset$.
Therefore, $<$ is a strict partial order.
The intersection graph of $\cut(\calR,S')$ is the incomparability graph of $<$, so it is perfect, which implies $\chi(\cut(\calR,S'))=\omega(\cut(\calR,S'))\leq k$.
\end{proof}

By Claim \ref{cla:pillars}, there is a coloring $\phi$ of $\calR$ with $k$ colors such that for any $R_1,R_2\in\calR$ with $\phi(R_1)=\phi(R_2)$, we have $\cut(R_1,S')\cap\cut(R_2,S')=\emptyset$.
For a color $c$, let $\calR^c=\{R\in\calR\colon\phi(R)=c\}$ and $\calD^c=\{D\in\calD\colon D\cap\bigcup\cut(\calR^c,S')\neq\emptyset\}$.
We are going to show that $\chi(\calD^c)\leq 8\xi_{k-1}^2$.
Once this is obtained, we will have $\chi(\calD)\leq\sum_c\chi(\calD^c)\leq 8k\xi_{k-1}^2=\beta_k$.

Since the sets $\cut(R,S')$ for $R\in\calR^c$ are pairwise disjoint, the curve $S'$ and the families $\cut(\calR^c,S')$ and $\calD^c$ satisfy the assumptions of Lemma \ref{lem:dist2}.
To simplify the notation, we assume for the remainder of the proof that $S=S'$, $R=\cut(R,S')$ for every $R\in\calR^c$, $\calR=\cut(\calR^c,S')$, and $\calD=\calD^c$.
By Jordan-Sch\"{o}nflies theorem, the segment $pq$ and the arc $S$ form a Jordan curve which is the boundary of a set $J$ homeomorphic to a closed disc.
In this new setting, $S$ is an arc and $\calR\cup\calD$ is a grounded family with the following properties:
\begin{itemize}
\item the base of every member of $\calR$ is surrounded by $S$,
\item every member of $\calR$ is contained in $J$ and intersects $S$,
\item the members of $\calR$ are pairwise disjoint,
\item $\calD$ is surrounded by $S$,
\item every member of $\calD$ intersects $\bigcup\calR$,
\item $\omega(\calR\cup\calD)\leq k$.
\end{itemize}
We enumerate the members of $\calR$ as $R_1,\ldots,R_m$ in the $\prec$-order, that is, so that $R_1\prec\cdots\prec R_m$.
We are going to show that $\chi(\calD)\leq 8\xi_{k-1}^2$.

\begin{claim}\label{cla:inters}
For\/ $1\leq i<j\leq m$, any arc in\/ $J$ between\/ $R_i$ and\/ $R_j$ intersects all\/ $R_{i+1},\ldots,R_{j-1}$.
\end{claim}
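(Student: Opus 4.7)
The plan is to show, for each fixed $\ell$ with $i < \ell < j$, that any arc $A \subset J$ from a point of $R_i$ to a point of $R_j$ must meet $R_\ell$. The idea is to exhibit inside $R_\ell$ an arc $B_\ell$ whose endpoints sit on $\partial J$ and which cuts $J$ into two pieces, one containing $R_i$ and the other containing $R_j$, so that $A$ is forced to cross $B_\ell$ and hence $R_\ell$.

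First I would build $B_\ell$. Pick any $b_\ell \in \base(R_\ell)$ (nonempty as $R_\ell$ is grounded) and any $y_\ell \in R_\ell \cap S$ (nonempty by the hypothesis on $\calR$). As $R_\ell$ is compact and arc-connected, there is an arc $B_\ell \subset R_\ell$ from $b_\ell$ to $y_\ell$, and it lies in $J$ because $R_\ell$ does. The endpoint $b_\ell$ lies on the segment $pq$ and $y_\ell$ lies on $S$, so both endpoints of $B_\ell$ lie on the Jordan curve $\partial J = pq \cup S$ and split it into two sub-arcs: a ``left'' one running from $b_\ell$ along $pq$ through $p$ and then along $S$ to $y_\ell$, and a symmetric ``right'' one through $q$. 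By Jordan-Sch\"onflies, $J$ is homeomorphic to a closed disc and $B_\ell$ becomes an arc with both endpoints on the boundary, so $J \setminus B_\ell$ decomposes into exactly two arc-connected components $C_L$ and $C_R$, whose closures meet $\partial J$ in the left and right sub-arcs respectively.

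It then remains to place $R_i, R_j$ on opposite sides and conclude. Since $R_i \prec R_\ell$ and $b_\ell \in \base(R_\ell)$, the nonempty base $\base(R_i)$ lies strictly to the left of $b_\ell$ on $pq$, hence in $C_L$; symmetrically $\base(R_j) \subset C_R$. The members of $\calR$ are pairwise disjoint in our reduced setting, so $R_i \cap B_\ell = R_j \cap B_\ell = \emptyset$, and arc-connectedness of $R_i$ and $R_j$ then forces $R_i \subset C_L$ and $R_j \subset C_R$. If the arc $A$ avoided $R_\ell$, it would avoid $B_\ell$ and thus lie in $C_L \cup C_R$, contradicting the fact that its endpoints sit in different components. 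The only delicate point is the appeal to Jordan-Sch\"onflies to ensure the two-component decomposition of $J \setminus B_\ell$ is valid when the endpoints of $B_\ell$ sit on $\partial J$, which is exactly the closed-halfplane/closed-disc variant of the Jordan curve theorem already invoked in the topological preliminaries.
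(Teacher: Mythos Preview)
Your argument is correct and is essentially the dual of the paper's. The paper fixes $R=R_\ell$ and argues that the connected set $R_i\cup A\cup R_j$ (all contained in $J$) surrounds $\base(R)$, while $R$ reaches $S$ and hence the exterior; since $R$ is disjoint from $R_i$ and $R_j$, it must meet $A$. You instead build a separating chord $B_\ell\subset R_\ell$ of the disc $J$ and force $A$ to cross it. Both proofs reduce to the same Jordan-type crossing fact; the paper's version is a couple of lines because it reuses the ``surrounded'' language already set up, whereas yours is more self-contained inside $J$.

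One small technical point: your claim that $J\setminus B_\ell$ has \emph{exactly} two components is not automatic, since nothing prevents the arc $B_\ell\subset R_\ell$ from touching $\partial J$ (either $pq$ or $S$) at interior points, which could create extra components. This is easily repaired: pass to a sub-arc of $B_\ell$ by taking $b'_\ell$ to be the last point of $B_\ell$ on $pq$ and $y'_\ell$ the first point on $S$ after $b'_\ell$; the sub-arc between them meets $\partial J$ only at its endpoints, $b'_\ell$ still lies in $\base(R_\ell)$ (hence strictly between $\base(R_i)$ and $\base(R_j)$), and now the half-plane form of the Jordan curve theorem from the preliminaries applies verbatim after the Jordan--Sch\"onflies homeomorphism. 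Alternatively, you can bypass the component count entirely and invoke directly the interleaving-arcs fact from the end of Section~2, applied (via the homeomorphism) to $B_\ell$ and any arc in $R_i\cup A\cup R_j$ from $\base(R_i)$ to $\base(R_j)$.
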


\begin{proof}
Let $A$ be an arc in $J$ between points $x_i\in R_i$ and $x_j\in R_j$.
For any $R\in\{R_{i+1},\ldots,R_{j-1}\}$, $\base(R)$ is surrounded by $R_i\cup A\cup R_j$ and we have $R\cap(R_i\cup A\cup R_j)\neq\emptyset$, as $R\cap S\neq\emptyset$.
Since $R$ is disjoint from $R_i$ and $R_j$, we have $R\cap A\neq\emptyset$.
\end{proof}

A point $x\in J$ is a \emph{neighbor} of $R_i$ if there is an arc in $J$ between $x$ and $R_i$ disjoint from all $R_1,\ldots,R_m$ except $R_i$.
It follows from Claim \ref{cla:inters} that each point in $J$ is a neighbor of at most two consecutive sets of $R_1,\ldots,R_m$.
For $1\leq i<m$, let $I_i$ denote the set of points in $J$ that are neighbors of $R_i$ and $R_{i+1}$.

\begin{claim}\label{cla:interval}
Any arc-connected subset of\/ $J$ intersects an interval of sets in the sequence\/ $R_1,I_1,R_2,\ldots,I_{m-1},R_m$.
\end{claim}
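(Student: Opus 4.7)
The plan is to derive the interval property in two steps: first handle the $R$-items of the sequence, then the $I$-items. The basic tool is that each $R_\ell$ separates $J$. Since $R_\ell$ is compact, arc-connected, and meets both the baseline and $S$, one extracts from $R_\ell$ a simple arc $\gamma_\ell$ from a point of $\base(R_\ell)$ to a point of $R_\ell\cap S$ whose interior lies in $\int(J)$; the Jordan--Sch\"onflies theorem, applied to the disc $J$ with boundary $pq\cup S$, then splits $J$ along $\gamma_\ell$ into two arc-connected pieces. Every $R_k$ or $I_k$ with $k\neq\ell$ is arc-connected and disjoint from $R_\ell$, hence lies entirely in one piece, which is determined by whether its base sits to the left or to the right of $\base(R_\ell)$; a short bookkeeping check places $R_1,I_1,\dots,I_{\ell-1}$ on one side and $I_\ell,R_{\ell+1},\dots,R_m$ on the other. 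Consequently, if $X$ hits an item whose position in the sequence precedes $R_\ell$ and an item whose position follows $R_\ell$, any arc in $X$ joining the two hits must cross $R_\ell$, so $X\cap R_\ell\neq\emptyset$.

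Next I would prove a channel lemma: if an arc-connected $X\subset J$ meets both $R_\ell$ and $R_{\ell+1}$, then it also meets $I_\ell$. Pick an arc $A\colon[0,1]\to X$ from a point of $R_\ell$ to a point of $R_{\ell+1}$ and set $t^-=\sup A^{-1}(R_\ell)$ and $t^+=\min\{t\in A^{-1}(R_{\ell+1}):t>t^-\}$. By construction $A$ avoids $R_\ell\cup R_{\ell+1}$ on $(t^-,t^+)$, and if $A(t')\in R_k$ for some $t'\in(t^-,t^+)$ and $k\notin\{\ell,\ell+1\}$, then applying Claim \ref{cla:inters} to $A|_{[t^-,t']}$ (when $k>\ell+1$) or to $A|_{[t',t^+]}$ (when $k<\ell$) forces an intermediate visit to $R_{\ell+1}$ or $R_\ell$ inside $(t^-,t^+)$, contradicting the choice of $t^-$ or $t^+$. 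Therefore $A((t^-,t^+))$ is contained in a single arc-connected component of $J\setminus\bigcup_k R_k$ whose closure in $J$ touches both $R_\ell$ and $R_{\ell+1}$; every point of such a component is a neighbor of both $R_\ell$ and $R_{\ell+1}$, so the component is contained in $I_\ell$, and $X\cap I_\ell\neq\emptyset$.

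To finish, suppose $X$ is arc-connected and intersects items at indices $a<b$ in the sequence. The separation argument produces a point of $X\cap R_\ell$ for every intermediate $R_\ell$. For every intermediate $I_\ell$, either $\ell$ is one of the two endpoints of the range (in which case $X\cap I_\ell\neq\emptyset$ holds by hypothesis) or both $R_\ell$ and $R_{\ell+1}$ are intermediate $R$-items already shown to meet $X$, and the channel lemma supplies a point of $X\cap I_\ell$. The main subtlety I foresee is that $X$ is only assumed to be arc-connected, so Lemma \ref{lem:curve} is unavailable to regularize an arc in $X$; the channel lemma therefore has to cope with an arc $A$ that may enter and leave the various $R_k$'s in complicated interleaved ways, and the sup/inf choice of $t^-$ and $t^+$ above is precisely what allows Claim \ref{cla:inters} alone to deliver the combinatorial control needed.
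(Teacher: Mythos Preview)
Your argument is correct, and your channel lemma is essentially identical to the paper's (the paper phrases it as ``take a $\subset$-minimal arc in $X$ between $R_i$ and $R_{i+1}$'' and then invokes Claim~\ref{cla:inters}; your explicit $t^-,t^+$ construction unpacks exactly this).

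Where you differ from the paper is in how you show that $X$ hits every intermediate $R_\ell$. You build a separating arc $\gamma_\ell\subset R_\ell$ and argue that each item of the sequence other than $R_\ell$ lies entirely on one side of $\gamma_\ell$. The paper instead avoids this entirely: it picks $x_i\in X\cap(R_i\cup I_i)$ and $x_j\in X\cap(I_{j-1}\cup R_j)$ at the extreme positions, notes that $x_i$ is a \emph{neighbor} of $R_i$ (by definition of $I_i$), so there is an auxiliary arc $A_i\subset J$ from $x_i$ to $R_i$ avoiding every other $R_k$, and symmetrically an arc $A_j$ to $R_j$. A single application of Claim~\ref{cla:inters} to an arc in $A_i\cup A\cup A_j$ then forces $A$ (hence $X$) to meet every $R_{i+1},\dots,R_{j-1}$. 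This is shorter and sidesteps the Jordan--Sch\"onflies setup you need for $\gamma_\ell$.

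One small gap in your write-up: in the separation step you assert that each $I_k$ is arc-connected, which is not available at this point in the paper (it is asserted only later, in the proof of Claim~\ref{cla:simple}, and even there without argument). Your conclusion that $I_k$ lies on one side of $\gamma_\ell$ is still true, but for the reason that every point of $I_k$ is, by definition, connected to $R_k$ (or to $R_{k+1}$) by an arc avoiding all other $R_j$'s, in particular avoiding $\gamma_\ell\subset R_\ell$; so the point lies on the same side of $\gamma_\ell$ as $R_k$ (respectively $R_{k+1}$). You should replace the arc-connectedness claim with this observation.
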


\begin{proof}
Let $X$ be an arc-connected subset of $J$.
First, we show that if $X$ intersects $R_i$ and $R_{i+1}$, then it also intersects $I_i$.
This is guaranteed by the compactness of $R_i$ and $R_{i+1}$.
Indeed, take a $\subset$-minimal arc in $J$ between $R_i$ and $R_{i+1}$.
By Claim \ref{cla:inters}, the interior of this arc is disjoint from all $R_1,\ldots,R_m$ and therefore must lie in $I_i$.

Now, let $i$ be the least index such that $X\cap(R_i\cup I_i)\neq\emptyset$ and $j$ be the greatest index such that $X\cap(I_{j-1}\cup R_j)\neq\emptyset$.
Let $A$ be an arc in $X$ between points $x_i\in X\cap(R_i\cup I_i)$ and $x_j\in X\cap(I_{j-1}\cup R_j)$.
Since $x_i$ is a neighbor of $R_i$, there is an arc $A_i\subset J$ between $x_i$ and $R_i$ disjoint from all $R_1,\ldots,R_m$ except $R_i$.
Similarly, since $x_j$ is a neighbor of $R_j$, there is an arc $A_j\subset J$ between $x_j$ and $R_j$ disjoint from all $R_1,\ldots,R_m$ except $R_j$.
There is an arc $\bar A\subset A_i\cup A\cup A_j$ between $R_i$ and $R_j$.
By Claim \ref{cla:inters}, $\bar A$ intersects all $R_{i+1},\ldots,R_{j-1}$.
But $R_{i+1},\ldots,R_{j-1}$ are disjoint from $A_i$ and $A_j$, hence they intersect~$A$.
\end{proof}

For convenience, define $I_0=I_m=\emptyset$.
For $D\in\calD$, define $\leftclip(D)=D\setminus I_i$ and $\rightclip(D)=D\setminus I_j$, where $i$ and $j$ are chosen so that $R_{i+1}$ is the first and $R_j$ is the last member of $\calR$ intersecting $D$ (see Figure \ref{fig:R-and-I-regions}).
This definition extends to families $\calM\subset\calD$: $\leftclip(\calM)=\{\leftclip(M)\colon M\in\calM\}$ and $\rightclip(\calM)=\{\rightclip(M)\colon M\in\calM\}$.

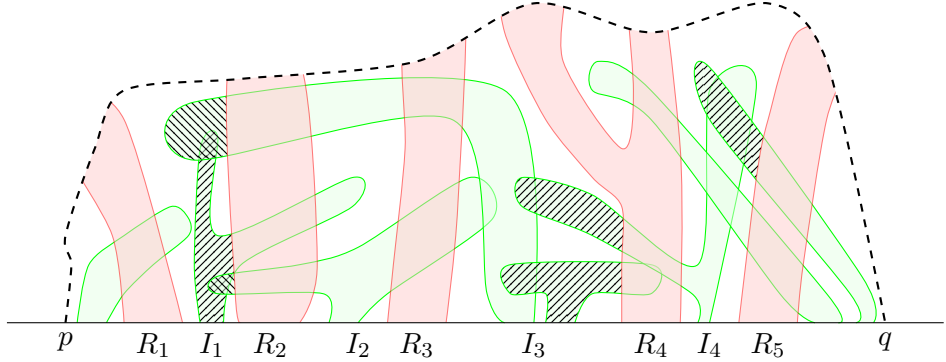
\begin{figure}[t]
\centering
\begin{tikzpicture}[scale=.77]
\coordinate (S0) at (1.0, 0.0);
\coordinate (S1) at (1.1, 1.0);
\coordinate (S2) at (1.0, 1.5);
\coordinate (S3) at (1.5, 3.0);
\coordinate (S4) at (2.0, 4.0);
\coordinate (S5) at (4.0, 4.2);
\coordinate (S6) at (7.0, 4.5);
\coordinate (S7) at (9.0, 5.5);
\coordinate (S8) at (11.0, 5.0);
\coordinate (S9) at (13.0, 5.5);
\coordinate (S10) at (14.0, 4.5);
\coordinate (S11) at (15.0, 0.0);

\coordinate (RA0) at (2.0, 0.0);
\coordinate (RA1) at (1.8, 1.5);
\coordinate (RA2) at (1.0, 3.0);
\coordinate (RA3) at (1.0, 4.0);
\coordinate (RA4) at (2.0, 3.5);
\coordinate (RA5) at (3.0, 0.0);

\coordinate (RB0) at (4.0, 0.0);
\coordinate (RB1) at (3.9, 0.5);
\coordinate (RB2) at (3.8, 4.5);
\coordinate (RB3) at (5.0, 4.5);
\coordinate (RB4) at (5.3, 1.0);
\coordinate (RB5) at (5.0, 0.0);

\coordinate (RC0) at (6.5, 0.0);
\coordinate (RC1) at (6.6, 1.0);
\coordinate (RC2) at (6.8, 3.0);
\coordinate (RC3) at (6.8, 5.0);
\coordinate (RC4) at (7.8, 6.0);
\coordinate (RC5) at (7.7, 2.0);
\coordinate (RC6) at (7.5, 0.0);

\coordinate (RD0) at (10.5, 0.0);
\coordinate (RD1) at (10.5, 1.0);
\coordinate (RD2) at (10.5, 2.0);
\coordinate (RD3) at (10.0, 2.5);
\coordinate (RD4) at (9.2, 3.5);
\coordinate (RD5) at (8.5, 5.0);
\coordinate (RD6) at (8.5, 6.0);
\coordinate (RD7) at (9.5, 6.0);
\coordinate (RD8) at (9.5, 4.5);
\coordinate (RD9) at (10.5, 3.0);
\coordinate (RD10) at (10.7, 4.0);
\coordinate (RD11) at (10.5, 7.0);
\coordinate (RD12) at (11.0, 7.0);
\coordinate (RD13) at (11.5, 3.0);
\coordinate (RD14) at (11.5, 0.0);

\coordinate (RE0) at (12.5, 0.0);
\coordinate (RE1) at (12.8, 2.5);
\coordinate (RE2) at (13.0, 3.5);
\coordinate (RE3) at (13.5, 5.0);
\coordinate (RE4) at (14.5, 5.0);
\coordinate (RE5) at (14.0, 3.5);
\coordinate (RE6) at (13.6, 1.0);
\coordinate (RE7) at (13.5, 0.0);

\coordinate (DA0) at (1.2, 0.0);
\coordinate (DA1) at (1.4, 1.0);
\coordinate (DA2) at (2.0, 1.5);
\coordinate (DA3) at (2.8, 2.0);
\coordinate (DA4) at (3.1, 1.8);
\coordinate (DA5) at (3.1, 1.5);
\coordinate (DA6) at (2.5, 1.0);
\coordinate (DA7) at (2.0, 0.5);
\coordinate (DA8) at (1.7, 0.0);

\coordinate (DB0) at (3.3, 0.0);
\coordinate (DB1) at (3.2, 1.0);
\coordinate (DB2) at (3.3, 3.0);
\coordinate (DB3) at (3.6, 3.2);
\coordinate (DB4) at (3.6, 1.5);
\coordinate (DB5) at (5.9, 2.5);
\coordinate (DB6) at (5.9, 2.0);
\coordinate (DB7) at (4.0, 1.0);
\coordinate (DB8) at (3.7, 0.0);

\coordinate (DC0) at (5.5, 0.0);
\coordinate (DC1) at (5.4, 0.5);
\coordinate (DC2) at (3.6, 0.5);
\coordinate (DC3) at (3.6, 0.8);
\coordinate (DC4) at (5.0, 1.0);
\coordinate (DC5) at (6.0, 1.3);
\coordinate (DC6) at (7.9, 2.5);
\coordinate (DC7) at (8.3, 1.8);
\coordinate (DC8) at (6.5, 0.5);
\coordinate (DC9) at (6.0, 0.0);

\coordinate (DD0) at (8.5, 0.0);
\coordinate (DD1) at (8.2, 0.5);
\coordinate (DD2) at (7.6, 3.5);
\coordinate (DD3) at (4.2, 3.4);
\coordinate (DD3) at (3.2, 2.8);
\coordinate (DD4) at (3.2, 3.8);
\coordinate (DD5) at (8.5, 4.0);
\coordinate (DD6) at (9.0, 0.0);

\coordinate (DG0) at (13.8, 0.0);
\coordinate (DG1) at (13.2, 0.5);
\coordinate (DG2) at (11.0, 3.5);
\coordinate (DG3) at (10.0, 4.0);
\coordinate (DG4) at (10.1, 4.5);
\coordinate (DG5) at (11.0, 4.0);
\coordinate (DG6) at (14.0, 0.5);
\coordinate (DG7) at (14.2, 0.0);

\coordinate (DH0) at (14.5, 0.0);
\coordinate (DH1) at (14.4, 0.5);
\coordinate (DH2) at (12.0, 3.5);
\coordinate (DH3) at (12.0, 4.4);
\coordinate (DH4) at (14.6, 0.8);
\coordinate (DH5) at (14.8, 0.0);

\coordinate (DE0) at (9.2, 0.0);
\coordinate (DE1) at (9.2, 0.5);
\coordinate (DE2) at (8.6, 0.5);
\coordinate (DE3) at (8.6, 1.0);
\coordinate (DE4) at (11.0, 1.0);
\coordinate (DE5) at (11.0, 0.5);
\coordinate (DE6) at (10.0, 0.5);
\coordinate (DE7) at (9.7, 0.0);

\coordinate (DF0) at (11.8, 0.0);
\coordinate (DF1) at (11.7, 0.5);
\coordinate (DF2) at (10.0, 1.5);
\coordinate (DF3) at (8.8, 2.0);
\coordinate (DF4) at (8.8, 2.5);
\coordinate (DF5) at (10.0, 2.2);
\coordinate (DF6) at (11.8, 1.0);
\coordinate (DF7) at (12.0, 4.0);
\coordinate (DF8) at (12.8, 4.1);
\coordinate (DF9) at (12.0, 0.0);

\fill[opacity=0.5, fill=white!90!green] plot[smooth, tension=0.5] coordinates
  {(DA0) (DA1) (DA2) (DA3) (DA4) (DA5) (DA6) (DA7) (DA8)};
\draw[color=green] plot[smooth, tension=0.5] coordinates
  {(DA0) (DA1) (DA2) (DA3) (DA4) (DA5) (DA6) (DA7) (DA8)};

\fill[opacity=0.5, fill=white!90!green] plot[smooth, tension=0.5] coordinates
  {(DB0) (DB1) (DB2) (DB3) (DB4) (DB5) (DB6) (DB7) (DB8)};
\begin{scope}
\begin{pgfinterruptboundingbox}
\path[clip] plot[smooth, tension=0.5] coordinates
  {(RB0) (RB1) (RB2) (RB3)} -- (100,100) -- (-100,100) -- (-100,0) -- cycle;
\end{pgfinterruptboundingbox}
\fill[pattern=north east lines] plot[smooth, tension=0.5] coordinates 
  {(DB0) (DB1) (DB2) (DB3) (DB4) (DB5) (DB6) (DB7) (DB8)};
\end{scope}
\draw[color=green] plot[smooth, tension=0.5] coordinates
  {(DB0) (DB1) (DB2) (DB3) (DB4) (DB5) (DB6) (DB7) (DB8)};

\fill[opacity=0.5, fill=white!90!green] plot[smooth, tension=0.5] coordinates
  {(DC0) (DC1) (DC2) (DC3) (DC4) (DC5) (DC6) (DC7) (DC8) (DC9)};
\begin{scope}
\begin{pgfinterruptboundingbox}
\path[clip] plot[smooth, tension=0.5] coordinates
  {(RB0) (RB1) (RB2) (RB3)} -- (100,100) -- (-100,100) -- (-100,0) -- cycle;
\end{pgfinterruptboundingbox}
\fill[pattern=north west lines] plot[smooth, tension=0.5] coordinates
  {(DC0) (DC1) (DC2) (DC3) (DC4) (DC5) (DC6) (DC7) (DC8) (DC9)};
\end{scope}
\draw[color=green] plot[smooth, tension=0.5] coordinates
  {(DC0) (DC1) (DC2) (DC3) (DC4) (DC5) (DC6) (DC7) (DC8) (DC9)};

\fill[opacity=0.5, fill=white!90!green] plot[smooth, tension=0.5] coordinates
  {(DD0) (DD1) (DD2) (DD3) (DD4) (DD5) (DD6)};
\begin{scope}
\begin{pgfinterruptboundingbox}
\path[clip] plot[smooth, tension=0.5] coordinates
  {(RB0) (RB1) (RB2) (RB3)} -- (100,100) -- (-100,100) -- (-100,0) -- cycle;
\end{pgfinterruptboundingbox}
\fill[pattern=north west lines] plot[smooth, tension=0.5] coordinates
  {(DD0) (DD1) (DD2) (DD3) (DD4) (DD5) (DD6)};
\end{scope}
\draw[color=green] plot[smooth, tension=0.5] coordinates
  {(DD0) (DD1) (DD2) (DD3) (DD4) (DD5) (DD6)};

\fill[opacity=0.5, fill=white!90!green] plot[smooth, tension=0.5] coordinates
  {(DE0) (DE1) (DE2) (DE3) (DE4) (DE5) (DE6) (DE7)};
\begin{scope}
\begin{pgfinterruptboundingbox}
\path[clip] plot[smooth, tension=0.5] coordinates
  {(RD0) (RD1) (RD2) (RD3) (RD4) (RD5) (RD6) (RD7)} -- (100,100) -- (-100,100) -- (-100,0) -- cycle;
\end{pgfinterruptboundingbox}
\fill[pattern=north east lines] plot[smooth, tension=0.5] coordinates
  {(DE0) (DE1) (DE2) (DE3) (DE4) (DE5) (DE6) (DE7)};
\end{scope}
\draw[color=green] plot[smooth, tension=0.5] coordinates
  {(DE0) (DE1) (DE2) (DE3) (DE4) (DE5) (DE6) (DE7)};

\fill[opacity=0.5, fill=white!90!green] plot[smooth, tension=0.5] coordinates
  {(DF0) (DF1) (DF2) (DF3) (DF4) (DF5) (DF6) (DF7) (DF8) (DF9)};
\begin{scope}
\begin{pgfinterruptboundingbox}
\path[clip] plot[smooth, tension=0.5] coordinates
  {(RD0) (RD1) (RD2) (RD3) (RD4) (RD5) (RD6) (RD7)} -- (100,100) -- (-100,100) -- (-100,0) -- cycle;
\end{pgfinterruptboundingbox}
\fill[pattern=north east lines] plot[smooth, tension=0.5] coordinates
  {(DF0) (DF1) (DF2) (DF3) (DF4) (DF5) (DF6) (DF7) (DF8) (DF9)};
\end{scope}
\draw[color=green] plot[smooth, tension=0.5] coordinates
  {(DF0) (DF1) (DF2) (DF3) (DF4) (DF5) (DF6) (DF7) (DF8) (DF9)};

\fill[opacity=0.5, fill=white!90!green] plot[smooth, tension=0.5] coordinates
  {(DG0) (DG1) (DG2) (DG3) (DG4) (DG5) (DG6) (DG7)};
\draw[color=green] plot[smooth, tension=0.5] coordinates
  {(DG0) (DG1) (DG2) (DG3) (DG4) (DG5) (DG6) (DG7)};

\fill[opacity=0.5, fill=white!90!green] plot[smooth, tension=0.5] coordinates
  {(DH0) (DH1) (DH2) (DH3) (DH4) (DH5)};
\begin{scope}
\begin{pgfinterruptboundingbox}
\path[clip] plot[smooth, tension=0.5] coordinates
  {(RE0) (RE1) (RE2) (RE3) (RE4)} -- (100,100) -- (-100,100) -- (-100,0) -- cycle;
\end{pgfinterruptboundingbox}
\fill[pattern=north east lines] plot[smooth, tension=0.5] coordinates
  {(DH0) (DH1) (DH2) (DH3) (DH4) (DH5)};
\end{scope}
\draw[color=green] plot[smooth, tension=0.5] coordinates
  {(DH0) (DH1) (DH2) (DH3) (DH4) (DH5)};

\begin{scope}
\path[clip] plot[smooth, tension=0.5] coordinates
  {(S0) (S1) (S2) (S3) (S4) (S5) (S6) (S7) (S8) (S9) (S10) (S11)};
\draw[opacity=0.5, color=red, fill=white!80!red] plot[smooth, tension=0.5] coordinates
  {(RA0) (RA1) (RA2) (RA3) (RA4) (RA5)};
\draw[opacity=0.5, color=red, fill=white!80!red] plot[smooth, tension=0.5] coordinates
  {(RB0) (RB1) (RB2) (RB3) (RB4) (RB5)};
\draw[opacity=0.5, color=red, fill=white!80!red] plot[smooth, tension=0.5] coordinates
  {(RC0) (RC1) (RC2) (RC3) (RC4) (RC5) (RC6)};
\draw[opacity=0.5, color=red, fill=white!80!red] plot[smooth, tension=0.5] coordinates
  {(RD0) (RD1) (RD2) (RD3) (RD4) (RD5) (RD6) (RD7) (RD8) (RD9) (RD10) (RD11) (RD12) (RD13) (RD14)};
\draw[opacity=0.5, color=red, fill=white!80!red] plot[smooth, tension=0.5] coordinates
  {(RE0) (RE1) (RE2) (RE3) (RE4) (RE5) (RE6) (RE7)};
\end{scope}

\path[draw, thick, dashed] plot[smooth, tension=0.5] coordinates
  {(S0) (S1) (S2) (S3) (S4) (S5) (S6) (S7) (S8) (S9) (S10) (S11)};

\draw (S0) node[below] {$p$};
\draw (S11) node[below] {$q$};

\draw (2.5, 0.0) node[below] {$R_1$};
\draw (3.5, 0.0) node[below] {$I_1$};
\draw (4.5, 0.0) node[below] {$R_2$};
\draw (6.0, 0.0) node[below] {$I_2$};
\draw (7.0, 0.0) node[below] {$R_3$};
\draw (9.0, 0.0) node[below] {$I_3$};
\draw (11.0, 0.0) node[below] {$R_4$};
\draw (12.0, 0.0) node[below] {$I_4$};
\draw (13.0, 0.0) node[below] {$R_5$};

\draw (0,0) -- (16,0) {};
\end{tikzpicture}
\caption{A dashed arc $S$, sets from $\calR$ spanned between the baseline and $S$, and sets from $\calD$ with $D\setminus\leftclip(D)$ marked for each $D\in\calD$.}
\label{fig:R-and-I-regions}
\end{figure}

\begin{claim}\label{cla:simple}
$\leftclip(\calD)$ and\/ $\rightclip(\calD)$ are simple families of compact arc-connected sets.
\end{claim}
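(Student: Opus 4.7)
My plan is to handle $\leftclip(\calD)$ and invoke symmetry for $\rightclip(\calD)$. The entire argument rests on the following topological fact, which I will establish first: \emph{for every $i$ with $1\leq i\leq m-1$, the set $I_i$ is relatively open in $J$ and satisfies $\overline{I_i}\setminus I_i\subset R_i\cup R_{i+1}$.} Openness follows because any point $x\in I_i$ has an open neighborhood $B\subset J\setminus\bigcup_l R_l$, and for every $y\in B$ a short path from $y$ to $x$ inside $B$ can be concatenated with the two arcs witnessing $x\in I_i$ to show that $y\in I_i$. For the boundary inclusion, pick $z\in\overline{I_i}\setminus I_i$ and a sequence $z_n\in I_i$ with $z_n\to z$. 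The same neighborhood argument rules out $z\notin\bigcup_l R_l$, so $z\in R_l$ for some $l$; if $l<i$ (the case $l>i+1$ is symmetric, using the other witnessing arc), then for large $n$ a short path from $z$ to $z_n$ in $J$ is disjoint from every $R_{l'}$ with $l'\neq l$, since all such $R_{l'}$ are at positive distance from the compact set $R_l$. Splicing this path with the arc showing that $z_n$ is a neighbor of $R_{i+1}$ and then reducing to a simple arc produces an arc between $R_l$ and $R_{i+1}$ disjoint from $R_{l+1},\dots,R_i$, which contradicts Claim~\ref{cla:inters}; hence $l\in\{i,i+1\}$.

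With this in hand, compactness of $\leftclip(D)=D\setminus I_{i_D}$ is immediate, as $I_{i_D}$ is open in $J$ and $D$ is compact. For arc-connectedness, set $i=i_D$, so that $R_{i+1}$ is the first member of $\calR$ met by $D$ and $D\cap R_l=\emptyset$ for $l\leq i$. Given $x,y\in\leftclip(D)$, take any arc $A\subset D$ from $x$ to $y$. If $A\cap I_i=\emptyset$ we are done; otherwise, $A^{-1}(I_i)$ is open in $[0,1]$, so its infimum $t_1$ and supremum $t_2$ send $A$ into $\overline{I_i}\setminus I_i\subset R_i\cup R_{i+1}$, and since $D\cap R_i=\emptyset$ both points lie in $D\cap R_{i+1}$. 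The latter is arc-connected by simplicity of $\calR\cup\calD$, so $A|_{[t_1,t_2]}$ may be replaced by an arc inside $D\cap R_{i+1}$; the resulting path stays in $\leftclip(D)$ and reduces to a simple arc.

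For simplicity of $\leftclip(\calD)$, fix a non-empty $\calM\subset\calD$ and let $i^*=\max_{D\in\calM}i_D$, attained by some $D^*\in\calM$. Pick a simple arc $A_{i^*}\subset R_{i^*}$ from the baseline to $S$; together with a portion of $\partial J$ it forms a Jordan curve that splits $J$ into a left and a right region. Since $D^*$ is connected, disjoint from $R_{i^*}\supset A_{i^*}$, and meets $R_{i^*+1}$ (which sits in the right region), $D^*$ lies entirely in the right region. For any $D\in\calM$ with $i_D<i^*$ and any point $y\in I_{i_D}$, the arc from $y$ to $R_{i_D}$ witnessing that $y$ is a neighbor of $R_{i_D}$ avoids $R_{i^*}$ and hence $A_{i^*}$, so $y$ lies on the same side of $A_{i^*}$ as $R_{i_D}$, namely the left region; hence $D^*\cap I_{i_D}=\emptyset$, and
\[
\bigcap_{D\in\calM}\leftclip(D)=\Bigl(\bigcap_{D\in\calM} D\Bigr)\setminus I_{i^*}.
\]
The set $\bigcap_{D\in\calM} D$ is arc-connected by simplicity of $\calD$, is contained in $D^*$ and thus disjoint from $R_{i^*}$, and its intersection with $R_{i^*+1}$ is arc-connected by simplicity of $\calR\cup\calD$, so the rerouting argument from the previous paragraph applies verbatim with $\bigcap_{D\in\calM} D$ in place of $D$. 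The main obstacle throughout is the boundary inclusion $\overline{I_i}\setminus I_i\subset R_i\cup R_{i+1}$; once it is granted, everything else is a direct application of the Jordan-type planar surgery used throughout the paper.
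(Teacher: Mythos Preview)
Your argument is correct, but it follows a genuinely different route from the paper's. The paper does not analyze $\overline{I_i}\setminus I_i$ at all, nor does it reduce to a single $I_{i^*}$ via a separating arc in $R_{i^*}$. Instead, the paper handles arc-connectedness and simplicity simultaneously: given $x,y\in\bigcap\leftclip(\calM)$, it invokes Lemma~\ref{lem:curve} (applied with $X=\bigcap\calM$ and $\calY=\calR$, using that the $R_i$ are pairwise disjoint and $\calR\cup\calD$ is simple) to obtain an arc $A\subset\bigcap\calM$ between $x$ and $y$ that is simple with respect to $\calR$. If $A$ left $\leftclip(M)$ for some $M\in\calM$, say at a point $z\in I_{i-1}$ where $R_i$ is the first member of $\calR$ meeting $M$, then by Claim~\ref{cla:interval} both subarcs of $A$ from $x$ to $z$ and from $z$ to $y$ would have to hit $R_i$, contradicting simplicity of $A$ with respect to $R_i$.

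What each approach buys: the paper's proof is shorter and leverages two tools already established (Lemma~\ref{lem:curve} and Claim~\ref{cla:interval}), so it avoids any fresh topological analysis. Your approach is more self-contained---it bypasses Lemma~\ref{lem:curve} entirely---but it trades that for the boundary fact $\overline{I_i}\setminus I_i\subset R_i\cup R_{i+1}$ and the separation argument with $A_{i^*}$. Both arguments ultimately bottom out in Claim~\ref{cla:inters}; yours uses it directly to prove the boundary inclusion, while the paper uses its consequence Claim~\ref{cla:interval}.
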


\begin{proof}
We present the proof only for $\leftclip(\calD)$, as for $\rightclip(\calD)$ it is analogous.
The sets $I_1,\ldots,I_{m-1}$ are open in $J$, as they are arc-connected components of the set $J\setminus\bigcup\calR$, which is open in $J$.
Each member of $\leftclip(\calD)$ is a difference of a compact set in $\calD$ and one of $I_1,\ldots,I_{m-1}$ and thus is compact as well.

To prove that $\leftclip(\calD)$ is simple and consists of arc-connected sets, we need to show that $\bigcap\leftclip(\calM)$ is arc-connected for any $\calM\subset\calD$.
Let $x,y\in\bigcap\leftclip(\calM)\subset\bigcap\calM$.
Since $\bigcap\calM$ is arc-connected, Lemma \ref{lem:curve} provides us with an arc $A\subset\bigcap\calM$ between $x$ and $y$ that is simple with respect to $\calR$.
It suffices to show $A\subset\leftclip(M)$ for each $M\in\calM$.
To this end, fix $M\in\calM$ and let $R_i$ be the $\prec$-least member of $\calR$ intersecting $M$. 
Suppose there is a point $z\in A\cap(M\setminus\leftclip(M))\subset I_{i-1}$.
Since $x,y\in\leftclip(M)\subset\bigcup_{j=i}^m(R_j\cup I_j)$, it follows from Claim \ref{cla:interval} that the parts of $A$ from $x$ to $z$ and from $z$ to $y$ intersect $R_i$.
This and $z\notin R_i$ contradict the simplicity of $A$ with respect to $R_i$.
\end{proof}

\begin{claim}\label{cla:clique}
$\leftclip(\calD)$ and\/ $\rightclip(\calD)$ have clique number at most\/ $k-1$.
\end{claim}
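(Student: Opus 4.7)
I will prove the claim for $\leftclip(\calD)$ by contradiction; the case of $\rightclip(\calD)$ is symmetric. Suppose $\leftclip(D_1), \ldots, \leftclip(D_k)$ pairwise intersect. Because $\leftclip(D) \subseteq D$, the sets $D_1, \ldots, D_k$ themselves pairwise intersect and form a $k$-clique in $\calD$. The strategy is to exhibit a single $R \in \calR$ that meets every $D_a$: then $\{R, D_1, \ldots, D_k\}$ is a $(k+1)$-clique in $\calR \cup \calD$, contradicting $\omega(\calR \cup \calD) \leq k$. Note that $R$ is a genuinely new vertex, since every set in $\calR$ meets $S$ whereas every set in $\calD$, being surrounded by $S$, is disjoint from $S$.

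For each $a$ let $R_{i_a+1}$ be the leftmost member of $\calR$ meeting $D_a$, so that $\leftclip(D_a) = D_a \setminus I_{i_a}$. Let $i := \max_a i_a$ and, reindexing if necessary, assume $i = i_k$. I claim that the desired common neighbor is $R_{i+1}$. For $a = k$ this holds by definition. For $a < k$, pick any $x \in \leftclip(D_a) \cap \leftclip(D_k)$. By Claim \ref{cla:interval}, the intersection of $D_k$ with the sequence $R_1, I_1, R_2, I_2, \ldots, R_m$ is a consecutive run whose leftmost entry is either $R_{i+1}$ or $I_i$; since $x \in D_k$ but $x \notin I_i$, the region containing $x$ must be $R_{i+1}$ or lie strictly to its right in the sequence.

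Applying Claim \ref{cla:interval} again to $D_a$, its intersection with the sequence is a consecutive run whose leftmost entry is $R_{i_a+1}$ or $I_{i_a}$ with $i_a \leq i$, hence at or to the left of $R_{i+1}$ in the sequence. Since this same run also contains the region of $x$, which sits at or to the right of $R_{i+1}$, the intermediate entry $R_{i+1}$ must itself belong to the run; therefore $D_a \cap R_{i+1} \neq \emptyset$, completing the contradiction. The argument for $\rightclip(\calD)$ is the mirror image, with $j := \min_a g(D_a)$ in place of $i+1$. The one point that needs care is the bookkeeping along the alternating sequence $R_1, I_1, R_2, \ldots, R_m$ --- in particular, that removing $I_{i_a}$ via $\leftclip$ precisely strips off the leftmost entry of $D_a$'s run when that entry happens to be an $I$-region --- but Claim \ref{cla:interval} reduces this to a purely combinatorial check rather than a fresh topological argument.
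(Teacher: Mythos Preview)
Your overall strategy coincides with the paper's: find a single $R_j$ meeting every $D_a$ and produce a $(k{+}1)$-clique. The paper reaches the same $R_{i+1}$ you do, but by applying Claim~\ref{cla:interval} directly to the sets $\leftclip(D_a)$ (arc-connected by Claim~\ref{cla:simple}), observing each of their intervals begins at an $R$-entry, and then using a Helly-type argument on pairwise-intersecting intervals.

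There is, however, a genuine gap in your pointwise argument. You write that ``the region containing $x$ must be $R_{i+1}$ or lie strictly to its right.'' This presumes that every point of $D_k$ lies in one of the regions $R_1,I_1,\ldots,R_m$, but Claim~\ref{cla:interval} only asserts which regions $D_k$ \emph{intersects}; it does not say $D_k$ is contained in their union. A point $x\in J\setminus\bigcup R_j$ can be a neighbor of exactly one $R_j$ (e.g.\ a pocket bounded solely by $R_j$), and such an $x$ lies in no $I_\ell$ and no $R_\ell$ at all. So from $x\in D_k$ and $x\notin I_i$ you cannot conclude that $x$ sits in a named region to the right of $R_{i+1}$.

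The fix is short and in the spirit of your argument: follow an arc in $D_k$ from $x$ until it first meets some $R_j$; since $D_k$ meets no $R_\ell$ with $\ell\le i$, necessarily $j\ge i+1$, and this arc witnesses that $x$ is a neighbor of $R_j$. Do the same inside $D_a$ to get $x$ a neighbor of some $R_{j'}$ with $j'\ge i_a+1$. As the paper notes just after Claim~\ref{cla:inters}, a point is a neighbor of at most two consecutive $R$'s, so $|j-j'|\le 1$. If $j'=i$ then $x$ would be a neighbor of both $R_i$ and $R_{i+1}$, i.e.\ $x\in I_i$, contradicting $x\in\leftclip(D_k)$. Hence $j'\ge i+1$, and since $D_a$ meets both $R_{i_a+1}$ and $R_{j'}$, Claim~\ref{cla:interval} gives $D_a\cap R_{i+1}\ne\emptyset$. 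With this correction your proof is complete and essentially equivalent to the paper's.
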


\begin{proof}
Again, we present the proof only for $\leftclip(\calD)$.
Let $\calK$ be a clique in $\leftclip(\calD)$.
By Claim \ref{cla:simple}, each member of $\calK$ is arc-connected.
Therefore, by Claim \ref{cla:interval}, each member of $\calK$ intersects an interval of sets in the sequence $R_1,I_1,\ldots,R_{m-1},I_{m-1},R_m$, the first set in this interval being of the form $R_j$.
Since the members of $\calK$ pairwise intersect, these intervals also pairwise intersect, which implies that they all contain a common $R_j$.
Thus $\calK\cup\{R_j\}$ is a clique, and $\omega(\calR\cup\calD)\leq k$ yields $|\calK|\leq k-1$.
\end{proof}

Recall that we need to show $\chi(\calD)\leq 8\xi_{k-1}^2$.
Define
\begin{itemize}
\item $\calD^L=\{X\in\calD\colon\base(\leftclip(X))\neq\emptyset\}$,
\item $\calD^R=\{X\in\calD\colon\base(\rightclip(X))\neq\emptyset\}$.
\end{itemize}
Since each member of $\calD$ intersects at least one of $R_1,\ldots,R_m$, we have $\calD^L\cup\calD^R=\calD$.
Therefore, it is enough to show that $\chi(\calD^L)\leq 4\xi_{k-1}^2$ and $\chi(\calD^R)\leq 4\xi_{k-1}^2$.
We only present the proof of $\chi(\calD^R)\leq 4\xi_{k-1}^2$, as the proof of the other inequality is analogous.

By Claims \ref{cla:simple} and \ref{cla:clique}, $\rightclip(\calD^R)$ is a grounded family with clique number at most $k-1$.
This and the induction hypothesis yield $\chi(\rightclip(\calD^R))\leq\xi_{k-1}$.
We fix a coloring $\phi^R$ of $\calD^R$ with $\xi_{k-1}$ colors so that $\phi^R(X)\neq\phi^R(Y)$ for any $X,Y\in\calD^R$ with $\rightclip(X)\cap\rightclip(Y)\neq\emptyset$.
Let $\calM\subset\calD^R$ be a family of sets having the same color in $\phi^R$.
In particular, we have $\rightclip(X)\cap\rightclip(Y)=\emptyset$ for any $X,Y\in\calM$.
It remains to prove that $\chi(\calM)\leq 4\xi_{k-1}$.

We show how to construct a coloring $\phi^L$ of $\calM$ with $\xi_{k-1}$ colors such that $\leftclip(X)\cap\leftclip(Y)=\emptyset$ for any $X,Y\in\calM$ with $\phi^L(X)=\phi^L(Y)$.
We exploit the fact that members of $\calM$ have pairwise disjoint intersections with each $R_i$ to simplify the topology of $\calM$ and $R_1,\ldots,R_m$.
Recall that $S$ is an arc with $\base(S)=\{p,q\}$.
For $1\leq i\leq m$, by Lemma \ref{lem:curve}, there is an arc $Q_i\subset R_i$ between $\base(R_i)$ and $R_i\cap S$ that is simple with respect to $\calM$.
We assume without loss of generality that $\base(Q_i)=\{u_i\}$ and $Q_i\cap S=\{v_i\}$.
The points $v_1,\ldots,v_m$ occur in this order on $S$ as moving from $p$ to $q$.
Moreover, the arcs $Q_1,\ldots,Q_m$ partition $J$ into $m+1$ sets $J_0,\ldots,J_m$, each homeomorphic to a closed disc, so that $J_{i-1}\cap J_i=Q_i$ for $1\leq i\leq m$.
It is clear that each arc-connected subset of $J$ intersects an interval of sets in the sequence $J_0,Q_1,J_1,\ldots,Q_m,J_m$.
Since $Q_1,\ldots,Q_m$ are simple with respect to $\calM$, so are $J_0,\ldots,J_m$.

Since the sets $J_0,\ldots,J_m$ are homeomorphic to a closed disc and so are rectangles with bottom sides $\base(J_0),\ldots,\base(J_m)$, there are homeomorphisms $\mu_0,\ldots,\mu_m$ such that
\begin{itemize}
\item $\mu_i$ is constant on $\base(J_i)$ and maps $J_i$ onto a rectangle with bottom side $\base(J_i)$ for $0\leq i\leq m$,
\item $\mu_{i-1}$ and $\mu_i$ agree on $Q_i$ for $1\leq i\leq m$.
\end{itemize}
Thus $\mu_0\cup\cdots\cup\mu_m$ is a homeomorphism between $J$ and a rectangle with bottom side $\base(J)$, and it extends to a homeomorphism $\mu$ of $\setR\times[0,+\infty)$ whose restriction to each $J_i$ is $\mu_i$.
Let $\tau_0,\ldots,\tau_m$ be horizontal translations such that $\tau_0(u)\prec\cdots\prec\tau_m(u)$ for a point $u$ on the baseline.
Let $x_1,\ldots,x_m$ be the $x$-coordinates of the points $u_1,\ldots,u_m$, respectively, so that $u_i=(x_i,0)$ for $1\leq i\leq m$.
Define
\begin{itemize}
\item $\hat J_0=\mu^{-1}((-\infty,x_1]\times[0,+\infty))$,
\item $\hat J_i=\mu^{-1}([x_i,x_{i+1}]\times[0,+\infty))$ for $1\leq i<m$,
\item $\hat J_m=\mu^{-1}([x_m,+\infty)\times[0,+\infty))$,
\item $\hat Q_i=\mu^{-1}(\{x_i\}\times[0,+\infty))$ for $1\leq i\leq m$,
\item $\sigma_i=\tau_i\circ\mu$ for $0\leq i\leq m$.
\end{itemize}
Note that $J_i\subset\hat J_i$ and $Q_i\subset\hat Q_i$.
For a set $X$, define
\begin{alignat*}{10}
X^\star=\sigma_0(X\cap\hat J_0)&\cup[\sigma_0&&(X\cap\hat Q_1&&),\sigma_1&&(X\cap\hat Q_1&&)]\cup\sigma_1&&(X\cap\hat J_1&&)\cup\cdots\\
&\cup[\sigma_{m-1}&&(X\cap\hat Q_m&&),\sigma_m&&(X\cap\hat Q_m&&)]\cup\sigma_m&&(X\cap\hat J_m&&)
\end{alignat*}
where $[Y,Z]$ denotes the rectangle with left side $Y$ and right side $Z$.
It is clear that the map $X\mapsto X^\star$ preserves compactness and arc-connectedness and is compatible with unions and intersections.
In particular, $\calM^\star=\{X^\star\colon X\in\calM\}$ is a grounded family with intersection graph isomorphic to that of $\calM$ (see Figure \ref{fig:transformation}).

\begin{figure}[t]
\centering
\begin{tikzpicture}[scale=.77]
\begin{scope}
\clip plot[smooth, tension=.7] coordinates {(-5.5,-1.5) (-4.5,3.5) (-1.5,3.5) (4.5,3) (5.5,-1.5)} -- cycle;
\fill[white!80!red] plot[smooth, tension=.7] coordinates {(-4.5,-1.5) (-5,1.5) (-6,3.5) (-4,4) (-3.5,-1.5)} -- cycle;
\draw[red] plot[smooth, tension=.7] coordinates {(-4.5,-1.5) (-5,1.5) (-6,3.5) (-4,4) (-3.5,-1.5)};
\fill[white!80!red] plot[smooth, tension=.7] coordinates {(-1,-1.5) (-1.5,4) (0,4) (0,-1.5)} -- cycle;
\draw[red] plot[smooth, tension=.7] coordinates {(-1,-1.5) (-1.5,4) (0,4) (0,-1.5)};
\fill[white!80!red] plot[smooth, tension=.7] coordinates {(2.5,-1.5) (2.5,4) (4,4) (3.5,1.5) (3.5,-1.5)} -- cycle;
\draw[red] plot[smooth, tension=.7] coordinates {(2.5,-1.5) (2.5,4) (4,4) (3.5,1.5) (3.5,-1.5)};
\draw[red,dashed] plot[smooth, tension=.7] coordinates {(-4,-1.5) (-4.5,2) (-5.5,3.5)};
\draw[red,dashed] plot[smooth, tension=.7] coordinates {(-0.5,-1.5) (-0.6,1.8) (-0.8,4)};
\draw[red,dashed] plot[smooth, tension=.7] coordinates {(3,-1.5) (3,2.5) (3.4,4)};
\fill[opacity=0.5,color=white!80!green] plot[smooth, tension=.7] coordinates {(-3,-1.5) (-3.2,1.2) (-4,3) (1,2.6) (1.4,1.8) (-2,2) (-2.5,-1.5)} -- cycle;
\draw[green] plot[smooth, tension=.7] coordinates {(-3,-1.5) (-3.2,1.2) (-4,3) (1,2.6) (1.4,1.8) (-2,2) (-2.5,-1.5)};
\fill[opacity=0.5,color=white!80!green] plot[smooth, tension=.7] coordinates {(5,-1.5) (4.5,2) (1.8,2.6) (1.6,1.8) (3.5,1.5) (4.5,-1.5)} -- cycle;
\draw[green] plot[smooth, tension=.7] coordinates {(5,-1.5) (4.5,2) (1.8,2.6) (1.6,1.8) (3.5,1.5) (4.5,-1.5)};
\fill[opacity=0.5,color=white!80!green] plot[smooth, tension=.7] coordinates {(0.5,-1.5) (1,1) (4,1) (4,0.5) (1.5,0.5) (1,-1.5)} -- cycle;
\draw[green] plot[smooth, tension=.7] coordinates {(0.5,-1.5) (1,1) (4,1) (4,0.5) (1.5,0.5) (1,-1.5)};
\fill[opacity=0.5,color=white!80!green] plot[smooth, tension=.7] coordinates {(1.5,-1.5) (2,0) (3.6,0) (3.6,-0.6) (2.5,-0.5) (2,-1.5)} -- cycle;
\draw[green] plot[smooth, tension=.7] coordinates {(1.5,-1.5) (2,0) (3.6,0) (3.6,-0.6) (2.5,-0.5) (2,-1.5)};
\fill[opacity=0.5,color=white!80!green] plot[smooth, tension=.7] coordinates {(-2,-1.5) (-1.5,1) (1.5,0.5) (2,-0.5) (-0.5,0) (-1.5,-1.5)} -- cycle;
\draw[green] plot[smooth, tension=.7] coordinates {(-2,-1.5) (-1.5,1) (1.5,0.5) (2,-0.5) (-0.5,0) (-1.5,-1.5)};
\end{scope}
\draw[dashed] plot[smooth, tension=.7] coordinates {(-5.5,-1.5) (-4.5,3.5) (-1.5,3.5) (4.5,3) (5.5,-1.5)};
\draw (-6,-1.5) -- (6,-1.5);
\node[below] at (-5.5,-1.5) {$p$};
\node[below] at (-4,-1.5) {$u_1$};
\node[below] at (-0.5,-1.5) {$u_2$};
\node[below] at (3,-1.5) {$u_3$};
\node[below] at (5.5,-1.5) {$q$};
\node[above left] at (-4.8,2.8) {$v_1$};
\node[above] at (-0.7,3.5) {$v_2$};
\node[above right] at (3.2,3.5) {$v_3$};
\end{tikzpicture}\\[2ex]
\begin{tikzpicture}[scale=0.7]
\fill[white!80!red] (-4,-1.5) rectangle (-3,3.5);
\draw[red,dashed] (-4,-1.5) -- (-4,3.5);
\begin{scope}[xshift=1cm]
\begin{scope}
\clip (-4,-1.5) rectangle (-0.5,3.5);
\fill[opacity=0.5,color=white!80!green] plot[smooth, tension=.7] coordinates {(-3,-1.5) (-3.1,1.2) (-3.4,3) (-0.5,2.9) (1.4,2.5) (1.3,1.9) (-0.5,2.3) (-2.2,1.7) (-2.5,-1.5)} -- cycle;
\draw[green] plot[smooth, tension=.7] coordinates {(-3,-1.5) (-3.1,1.2) (-3.4,3) (-0.5,2.9) (1.4,2.5) (1.3,1.9) (-0.5,2.3) (-2.2,1.7) (-2.5,-1.5)};
\fill[opacity=0.5,color=white!80!green] plot[smooth, tension=.7] coordinates {(-2,-1.5) (-1.6,0.5) (-0.5,1.1) (1.5,0.5) (2,-0.5) (-0.5,0) (-1.5,-1.5)} -- cycle;
\fill[pattern=north west lines] plot[smooth, tension=.7] coordinates {(-2,-1.5) (-1.6,0.5) (-0.5,1.1) (1.5,0.5) (2,-0.5) (-0.5,0) (-1.5,-1.5)} -- cycle;
\draw[green] plot[smooth, tension=.7] coordinates {(-2,-1.5) (-1.6,0.5) (-0.5,1.1) (1.5,0.5) (2,-0.5) (-0.5,0) (-1.5,-1.5)};
\end{scope}
\fill[white!80!red] (-0.5,-1.5) rectangle (0.5,3.5);
\begin{scope}
\clip (-0.5,-1.5) rectangle (0.5,3.5);
\fill[opacity=0.5,color=white!80!green] (-0.5,2.3) rectangle (0.5,2.9);
\fill[opacity=0.5,color=white!80!green] (-0.5,0) rectangle (0.5,1.1);
\fill[pattern=north west lines] (-0.5,0) rectangle (0.5,1.1);
\end{scope}
\draw[green,line cap=round] (-0.5,2.9) -- (0.5,2.9);
\draw[green,line cap=round] (-0.5,2.3) -- (0.5,2.3);
\draw[green,line cap=round] (-0.5,1.1) -- (0.5,1.1);
\draw[green,line cap=round] (-0.5,0) -- (0.5,0);
\draw[red,dashed] (-4,-1.5) -- (-4,3.5);
\draw[red,dashed] (-0.5,-1.5) -- (-0.5,3.5);
\end{scope}
\begin{scope}[xshift=2cm]
\begin{scope}
\clip (-0.5,-1.5) rectangle (3,3.5);
\fill[opacity=0.5,color=white!80!green] plot[smooth, tension=.7] coordinates {(0.5,-1.5) (0.9,0.9) (3,1.2) (4.2,1) (4.1,0.5) (3,0.7) (1.4,0.4) (1,-1.5)} -- cycle;
\fill[pattern=north west lines] plot[smooth, tension=.7] coordinates {(0.5,-1.5) (0.9,0.9) (3,1.2) (4.2,1) (4.1,0.5) (3,0.7) (1.4,0.4) (1,-1.5)} -- cycle;
\draw[green] plot[smooth, tension=.7] coordinates {(0.5,-1.5) (0.9,0.9) (3,1.2) (4.2,1) (4.1,0.5) (3,0.7) (1.4,0.4) (1,-1.5)};
\fill[opacity=0.5,color=white!80!green] plot[smooth, tension=.7] coordinates {(1.5,-1.5) (2,0) (3,0.2) (3.6,0) (3.6,-0.6) (3,-0.3) (2.5,-0.5) (2,-1.5)} -- cycle;
\fill[pattern=north west lines] plot[smooth, tension=.7] coordinates {(1.5,-1.5) (2,0) (3,0.2) (3.6,0) (3.6,-0.6) (3,-0.3) (2.5,-0.5) (2,-1.5)} -- cycle;
\draw[green] plot[smooth, tension=.7] coordinates {(1.5,-1.5) (2,0) (3,0.2) (3.6,0) (3.6,-0.6) (3,-0.3) (2.5,-0.5) (2,-1.5)};
\fill[opacity=0.5,color=white!80!green] plot[smooth, tension=.7] coordinates {(5,-1.5) (4.6,1.6) (3,2.4) (1.5,2.4) (1.6,1.8) (3,1.7) (3.9,0.9) (4.5,-1.5)} -- cycle;
\fill[pattern=north west lines] plot[smooth, tension=.7] coordinates {(5,-1.5) (4.6,1.6) (3,2.4) (1.5,2.4) (1.6,1.8) (3,1.7) (3.9,0.9) (4.5,-1.5)} -- cycle;
\draw[green] plot[smooth, tension=.7] coordinates {(5,-1.5) (4.6,1.6) (3,2.4) (1.5,2.4) (1.6,1.8) (3,1.7) (3.9,0.9) (4.5,-1.5)};
\fill[opacity=0.5,color=white!80!green] plot[smooth, tension=.7] coordinates {(-3,-1.5) (-3.1,1.2) (-3.4,3) (-0.5,2.9) (1.4,2.5) (1.3,1.9) (-0.5,2.3) (-2.2,1.7) (-2.5,-1.5)} -- cycle;
\draw[green] plot[smooth, tension=.7] coordinates {(-3,-1.5) (-3.1,1.2) (-3.4,3) (-0.5,2.9) (1.4,2.5) (1.3,1.9) (-0.5,2.3) (-2.2,1.7) (-2.5,-1.5)};
\fill[opacity=0.5,color=white!80!green] plot[smooth, tension=.7] coordinates {(-2,-1.5) (-1.6,0.5) (-0.5,1.1) (1.5,0.5) (2,-0.5) (-0.5,0) (-1.5,-1.5)} -- cycle;
\draw[green] plot[smooth, tension=.7] coordinates {(-2,-1.5) (-1.6,0.5) (-0.5,1.1) (1.5,0.5) (2,-0.5) (-0.5,0) (-1.5,-1.5)};
\end{scope}
\fill[white!80!red] (3,-1.5) rectangle (4,3.5);
\begin{scope}
\clip (3,-1.5) rectangle (4,3.5);
\fill[opacity=0.5,color=white!80!green] (3,0.7) rectangle (4,1.2);
\fill[pattern=north west lines] (3,0.7) rectangle (4,1.2);
\fill[opacity=0.5,color=white!80!green] (3,-0.3) rectangle (4,0.2);
\fill[pattern=north west lines] (3,-0.3) rectangle (4,0.2);
\fill[opacity=0.5,color=white!80!green] (3,1.7) rectangle (4,2.4);
\fill[pattern=north west lines] (3,1.7) rectangle (4,2.4);
\end{scope}
\draw[green,line cap=round] (3,1.2) -- (4,1.2);
\draw[green,line cap=round] (3,0.7) -- (4,0.7);
\draw[green,line cap=round] (3,0.2) -- (4,0.2);
\draw[green,line cap=round] (3,-0.3) -- (4,-0.3);
\draw[green,line cap=round] (3,2.4) -- (4,2.4);
\draw[green,line cap=round] (3,1.7) -- (4,1.7);
\draw[green!50!black,thick] (-0.7,-1.5) -- (-0.7,0.3) arc (180:90:0.2);
\draw[red,dashed] (-0.5,-1.5) -- (-0.5,3.5);
\draw[red,dashed] (3,-1.5) -- (3,3.5);
\end{scope}
\begin{scope}[xshift=3cm]
\begin{scope}
\clip (3,-1.5) rectangle (5.5,3.5);
\fill[opacity=0.5,color=white!80!green] plot[smooth, tension=.7] coordinates {(0.5,-1.5) (0.9,0.9) (3,1.2) (4.2,1) (4.1,0.5) (3,0.7) (1.4,0.4) (1,-1.5)} -- cycle;
\draw[green] plot[smooth, tension=.7] coordinates {(0.5,-1.5) (0.9,0.9) (3,1.2) (4.2,1) (4.1,0.5) (3,0.7) (1.4,0.4) (1,-1.5)};
\fill[opacity=0.5,color=white!80!green] plot[smooth, tension=.7] coordinates {(1.5,-1.5) (2,0) (3,0.2) (3.6,0) (3.6,-0.6) (3,-0.3) (2.5,-0.5) (2,-1.5)} -- cycle;
\draw[green] plot[smooth, tension=.7] coordinates {(1.5,-1.5) (2,0) (3,0.2) (3.6,0) (3.6,-0.6) (3,-0.3) (2.5,-0.5) (2,-1.5)};
\fill[opacity=0.5,color=white!80!green] plot[smooth, tension=.7] coordinates {(5,-1.5) (4.6,1.6) (3,2.4) (1.5,2.4) (1.6,1.8) (3,1.7) (3.9,0.9) (4.5,-1.5)} -- cycle;
\draw[green] plot[smooth, tension=.7] coordinates {(5,-1.5) (4.6,1.6) (3,2.4) (1.5,2.4) (1.6,1.8) (3,1.7) (3.9,0.9) (4.5,-1.5)};
\end{scope}
\draw[green,line cap=round] (3,0.7) -- (3,1.2);
\draw[green,line cap=round] (3,-0.3) -- (3,0.2);
\draw[green,line cap=round] (3,1.7) -- (3,2.4);
\draw[green!50!black,thick] (2.8,-1.5) -- (2.8,-0.3) arc (180:90:0.2);
\draw[green!50!black,thick] (2.6,-1.5) -- (2.6,0.55) arc (180:90:0.4);
\draw[red,dashed] (3,-1.5) -- (3,3.5);
\end{scope}
\draw[dashed] (-5.5,-1.5) -- (-5.5,3.5) -- (8.5,3.5) -- (8.5,-1.5);
\draw (-6,-1.5) -- (9,-1.5);
\node[below] (a) at (-6.5,-2.5) {$\sigma_0(p)$};
\node[below] (b) at (-4.5,-2.5) {$\sigma_0(u_1)$};
\node[below] (c) at (-2.5,-2.5) {$\sigma_1(u_1)$};
\node[below] (d) at (0,-2.5) {$\sigma_1(u_2)$};
\node[below] (e) at (2,-2.5) {$\sigma_2(u_2)$};
\node[below] (f) at (4.5,-2.5) {$\sigma_2(u_3)$};
\node[below] (g) at (6.5,-2.5) {$\sigma_3(u_3)$};
\node[below] (h) at (8.5,-2.5) {$\sigma_3(q)$};
\begin{scope}[>=latex,->,shorten >=1pt]
\path (a) edge (-5.5,-1.5);
\path (b) edge (-4,-1.5);
\path (c) edge (-3,-1.5);
\path (d) edge (0.5,-1.5);
\path (e) edge (1.5,-1.5);
\path (f) edge (5,-1.5);
\path (g) edge (6,-1.5);
\path (h) edge (8.5,-1.5);
\end{scope}
\end{tikzpicture}
\caption{The transformation $X\mapsto X^\star$;\enspace top: a family $\calM$ before transformation;\enspace bottom: the families $\calM^\star$ (including the marked regions) and $\calM^+$ (excluding the marked regions), and connections of the sets in $\calM^\star$ to the baseline.}
\label{fig:transformation}
\end{figure}
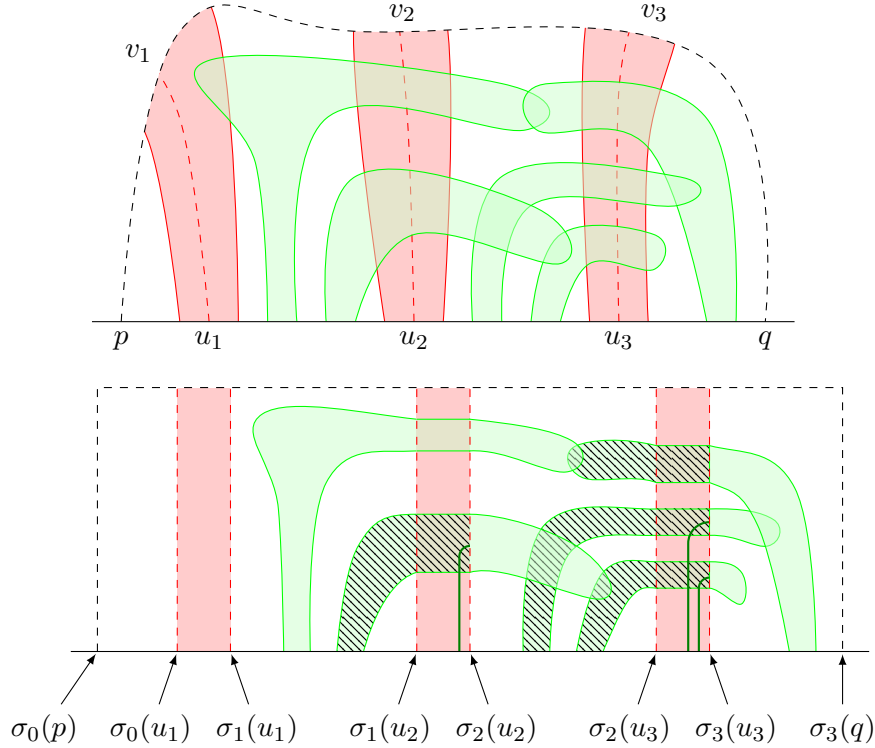

In the remainder of the proof, we will deal with $\calM^\star$ and $R_1^\star,\ldots,R_m^\star$, but for simplicity we relabel them to $\calM$ and $R_1,\ldots,R_m$, respectively.
We also relabel $I_0^\star,\ldots,I_m^\star$ to $I_0,\ldots,I_m$, $Q_1^\star,\ldots,Q_m^\star$ to $Q_1,\ldots,Q_m$, $S^\star$ to $S$, $J^\star$ to $J$, and $\sigma_0(J_0),\ldots,\sigma_m(J_m)$ to $J_0,\ldots,J_m$.
The following properties clearly follow:
\begin{itemize}
\item $J_0,\ldots,J_m$ are pairwise disjoint rectangles,
\item $Q_i$ is a rectangle whose left side is the right side of $J_{i-1}$ and whose right side is the left side of $J_i$, for $1\leq i\leq m$,
\item every arc-connected subset of $J$ intersects an interval of sets in the sequence $J_0,Q_1,J_1,\ldots,Q_m,J_m$,
\item $Q_i\subset R_i$ for $1\leq i\leq m$,
\item the intersection of any member of $\calM$ with any $J_i$ is arc-connected,
\item the intersection of any member of $\calM$ with any $Q_i$ is a rectangle or horizontal segment spanning the entire width of $Q_i$. 
\end{itemize}

\begin{claim}\label{cla:attach}
There is a grounded family\/ $\calM'$ with intersection graph isomorphic to that of\/ $\leftclip(\calM)$.
\end{claim}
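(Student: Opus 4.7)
\textbf{Plan for Claim~\ref{cla:attach}.}
My plan is to build $\calM'$ by adjoining to each set $\leftclip(M)$, for $M\in\calM$, a thin arc (a ``tail'') that connects it to the baseline; this will make the family grounded without introducing any new intersections. Recall that in the transformed picture each $Q_i$ is a thin vertical rectangle, each $J_i$ is a rectangle between $Q_i$ and $Q_{i+1}$, and each member of $\calM$ meets $R_i$ (when at all) in a horizontal strip spanning the full width of $Q_i$.

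For $M\in\calM$ let $R_{j(M)}$ denote the first $R$-set met by $M$, so $\leftclip(M)=M\setminus I_{j(M)-1}$. Then $\leftclip(M)$ meets the left edge of $Q_{j(M)}$ in a non-degenerate vertical segment $V_M$, and for $M$'s with a common value $j(M)=i$ the segments $V_M$ are pairwise disjoint by the disjointness of the intersections of members of $\calM$ with $R_i$. I would attach to each such $\leftclip(M)$ an L-shaped tail: a short horizontal piece leaving the bottom endpoint of $V_M$ into $J_{i-1}$ followed by a vertical piece dropping down to the baseline. The $x$-coordinates of the vertical pieces would be chosen strictly decreasing with the height of $V_M$, so that tails associated with the same $i$ are nested and pairwise disjoint; tails for distinct $i$ lie in distinct rectangles $J_{i-1}$ and hence are automatically disjoint.

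The main technical obstacle is avoiding accidental intersections between a tail and another member $\leftclip(M')$ whose trace in $J_{i-1}$ is non-empty (for instance, an $M'$ that passes through both $R_{i-1}$ and $R_i$). To deal with this, I would first apply Lemma~\ref{lem:curve} inside each $J_i$ to replace each $M\cap J_i$ by a simple arc (simple with respect to the other members of $\calM$) joining its crossings of $\partial J_i$. This operation preserves the intersection graph and turns $\calM$ inside each $J_i$ into a family of pairwise disjoint thin arcs, which partition $J_{i-1}$ into finitely many topological cells. For every $M$ with $j(M)=i$ I would place the tail of $\leftclip(M)$ in the cell lying immediately to the left of $V_M$; this cell reaches the baseline because its right boundary includes an interval of the left edge of $Q_i$ descending all the way to the baseline. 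Hence each tail reaches the baseline without crossing any $\leftclip(M')$ with $M'\neq M$.

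Finally, $\calM'$ is grounded because every tail touches the baseline, and the intersection graph of $\calM'$ coincides with that of $\leftclip(\calM)$: tails for the same $i$ are nested and disjoint from each other, tails for different $i$'s live in different $J_{i-1}$'s, and by cell placement no tail crosses any foreign $\leftclip$-piece.
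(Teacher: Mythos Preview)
Your overall plan—attach a thin tail to each $\leftclip(M)$ so as to ground it without creating new intersections—is the paper's idea as well, but the execution you sketch has genuine gaps.

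The appeal to Lemma~\ref{lem:curve} does not deliver what you claim. That lemma takes a set $X$ that is simple with respect to a family $\calY$ whose intersections with $X$ are \emph{pairwise disjoint}, and returns an arc in $X$ that is \emph{simple} with respect to $\calY$; simplicity means each intersection is connected, not empty. So after your replacement the traces of members of $\calM$ inside $J_{i-1}$ are \emph{not} ``pairwise disjoint thin arcs'' (for instance, a set whose last $R$-set is $R_{i-1}$ can meet, inside $J_{i-1}$, a set that passes through to $R_i$). Hence there is no clean cell decomposition of $J_{i-1}$ in which to place the tail, and the claim that the replacement ``preserves the intersection graph'' of $\leftclip(\calM)$ is asserted but not argued.

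Even granting a decomposition, your key sentence—``this cell reaches the baseline because its right boundary includes an interval of the left edge of $Q_i$ descending all the way to the baseline''—begs the question. It presumes that no $M'$ with $j(M')<i$ crosses the left side of $Q_i$ below $V_M$. That is precisely the non-trivial topological fact one must prove, and it requires a Jordan-curve argument using that $M$ has its base in $J_{i-1}$ while $M'$ already meets $R_{i-1}$; you do not supply it.

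The paper avoids both difficulties by two adjustments. First, it does not work with $\leftclip(M)$ directly but with the smaller set $M^+$ consisting of everything in $M$ from the right side of $Q_{j(M)}$ onward; since $\leftclip(M)\setminus M^+\subset R_{j(M)}$ and members of $\calM$ meet each $R_i$ in pairwise disjoint pieces, $\{M^+\}$ has the same intersection graph as $\leftclip(\calM)$, with no arc-replacement needed. Second, it draws the tail \emph{inside} the rectangle $Q_{j(M)}$ rather than in $J_{j(M)-1}$: there the traces of members of $\calM$ are genuinely pairwise disjoint horizontal strips, and a short Jordan-curve argument (using arc-connectedness of $M\cap J_{j(M)-1}$ and of $(M'\cup R_{j(M)-1})\cap J_{j(M)-1}$) shows that every $M'$ with $j(M')<j(M)$ crosses $Q_{j(M)}$ \emph{above} $M$, so nothing blocks the straight descent to the baseline.
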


\begin{proof}
Let $\calM_i=\{X\in\calM\colon X\cap R_i\neq\emptyset\}$ for $1\leq i\leq m$.
It follows that $\leftclip(X)=X\setminus I_i$ for every $X\in\calM_{i+1}\setminus\calM_i$ and $1\leq i<m$.
We can assume without loss of generality that each member of $\calM_{i+1}\setminus\calM_i$ intersects $Q_{i+1}$ for $1\leq i<m$, as those that do not are isolated vertices in the intersection graph of $\leftclip(\calM)$ and thus do not influence the existence of $\calM'$.

For $X\in\calM_{i+1}\setminus\calM_i$ and $1\leq i<m$, let $X^+$ denote the part of $X$ to the right of $Q_{i+1}$ including the right side of $Q_{i+1}$, that is, $X^+=X\cap(J_{i+1}\cup Q_{i+2}\cup J_{i+2}\cup\cdots\cup Q_m\cup J_m)$.
It follows that $X^+\subset\leftclip(X)\subset X^+\cup R_{i+1}$ and $X^+$ is compact and arc-connected.
For $X\in\calM_1$, let $X^+=X=\leftclip(X)$.
Let $\calM^+=\{X^+\colon X\in\calM\}$.
Since the intersections of the members of $\leftclip(\calM)$ with each $R_{i+1}$ are pairwise disjoint, the intersection graphs of $\calM^+$ and $\leftclip(\calM)$ are isomorphic.
We show how to extend the sets $X^+$ for which $\base(X^+)=\base(\leftclip(X))=\emptyset$ to connect them to the baseline without creating any new intersections (see Figure \ref{fig:transformation}).

Let $1\leq i<m$, $X\in\calM_{i+1}\setminus\calM_i$, and $\base(X^+)=\emptyset$.
Thus $\base(X)\subset\base(J_i)$.
For every $Y\in\calM_i$, it is an immediate consequnce of the Jordan curve theorem and arc-connectedness of $X\cap J_i$ and $(Y\cup R_i)\cap J_i$ that $Y\cap Q_{i+1}$ is empty or lies above $X\cap Q_{i+1}$.
Therefore, we can connect $X^+$ to $\base(Q_{i+1})$ by an arc inside $Q_{i+1}$ that is disjoint from any other $Y^+\in\calM^+$.
Moreover, all these arcs for $X\in\calM_{i+1}\setminus\calM_i$ with $\base(X^+)=\emptyset$ can be drawn so that they are pairwise disjoint.
Doing so for all $i$ with $1\leq i<m$, we transform $\calM^+$ into a grounded family $\calM'$ with intersection graph isomorphic to the intersection graph of $\calM^+$ and hence of $\leftclip(\calM)$.
\end{proof}

Claim \ref{cla:attach} allows us to use the induction hypothesis on $\leftclip(\calM)$ to obtain a coloring $\phi^L$ of $\calM$ with $\xi_{k-1}$ colors such that $\leftclip(X)\cap\leftclip(Y)=\emptyset$ for any $X,Y\in\calM$ with $\phi^L(X)=\phi^L(Y)$.
Let $\calN\subset\calM$ be a family of sets having the same color in $\phi^L$.
In particular, we have $\leftclip(X)\cap\leftclip(Y)=\emptyset$ and $\rightclip(X)\cap\rightclip(Y)=\emptyset$ for any $X,Y\in\calN$.
The following claim completes the proof of Lemma \ref{lem:dist2}.
The planarity argument used in the proof applies the ideas of McGuinness \cite{McG00}.

\begin{claim}\label{cla:final}
$\chi(\calN)\leq 4$.
\end{claim}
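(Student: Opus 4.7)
My plan is to prove $\chi(\calN) \le 4$ by first establishing that the intersection graph $G_\calN$ is triangle-free (as a consequence of the simultaneous $\leftclip$ and $\rightclip$ disjointness), then showing it admits a planar embedding via the geometry, and finally invoking the Four-Color Theorem.

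Writing $L(X) = f(X)-1$ and $R(X) = l(X)$ for each $X \in \calN$, the $\leftclip$- and $\rightclip$-disjointness within $\calN$ forces any intersection of distinct $X, Y \in \calN$ to lie both in $I_{L(X)} \cup I_{L(Y)}$ and in $I_{R(X)} \cup I_{R(Y)}$. Since the $I_j$'s are pairwise disjoint arc-connected components of $J \setminus \bigcup \calR$ and $L < R$ always holds, this forces $R(X) = L(Y)$ (or symmetrically $R(Y) = L(X)$), with the intersection contained in the corresponding single $I_i$. A short case check on $L,R$-values rules out three pairwise intersecting members of $\calN$, so $G_\calN$ is triangle-free and every edge of $G_\calN$ is canonically located at a definite $I_i$.

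For the planar embedding, for each $X \in \calN$ I would apply Lemma \ref{lem:curve} to pick a simple spine arc $\gamma_X \subset X$ from a point $p_L(X) \in X \cap I_{L(X)}$ (or on the baseline if $L(X) = 0$) to a point $p_R(X) \in X \cap I_{R(X)}$ (or on the baseline if $R(X) = m$). The pairwise disjointness of left and right overhangs within $\calN$ lets these endpoints in each $I_i$ be placed in distinct slots, linearly ordered by the heights of the corresponding bands in $Q_i$; two spines $\gamma_X, \gamma_Y$ then interact only in the slot regions of their common $I_i$ (if any).

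The main obstacle is turning this planar arc configuration into a planar graph actually containing $G_\calN$. Locally at each $I_i$, the adjacencies of $G_\calN$ form a bipartite graph between the right-terminating family $\{X \in \calN : R(X) = i\}$ and the left-beginning family $\{Y \in \calN : L(Y) = i\}$, which a priori could be a non-planar complete bipartite graph. Following the planarity argument of McGuinness \cite{McG00}, the key topological fact to verify is that in each $I_i$ (a topological disk) three pairwise-disjoint arc-connected shapes attached to one side of the disk cannot all simultaneously meet three pairwise-disjoint shapes attached to the opposite side — that is, the local bipartite graph at each $I_i$ is $K_{3,3}$-free and hence planar. Gluing the planar local pictures along the $I_i$'s using the global plane embedding of the spines produces a planar graph containing $G_\calN$, and the Four-Color Theorem yields $\chi(\calN) \le 4$.
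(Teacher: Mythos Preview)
Your triangle-freeness argument is correct, but the planarity claim fails, and with it the whole approach. The ``key topological fact'' you propose to verify---that in a topological disc three pairwise-disjoint arc-connected shapes attached to one side cannot all meet three pairwise-disjoint shapes attached to the opposite side---is simply false. In a rectangle, take three nested L-shapes $A_1,A_2,A_3$ attached to the left side (horizontal segments at heights $0.5,0.6,0.7$ that turn upward at $x=3.4,3.2,3.0$ respectively and rise to height~$3.5$) and three horizontal strips $B_1,B_2,B_3$ attached to the right side at heights $1,2,3$; then every $A_j$ meets every $B_k$ and the six sets are pairwise disjoint within each side. This pattern is easily realised as an actual instance of~$\calN$: take $m=2$, let $X_1,X_2,X_3$ have $f=l=1$ with bases in $J_0$ and right overhangs in $I_1$ given by the $A_j$'s, and let $Y_1,Y_2,Y_3$ have $f=l=2$ with bases in $I_1$ and left overhangs given by three nested L-shapes from the bottom-right corner of~$J_1$. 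All $\leftclip$/$\rightclip$ disjointness conditions and the membership in $\calD^R$ are readily checked, and the intersection graph of these six sets is $K_{3,3}$. Hence $G_\calN$ is not planar in general, and the Four-Color Theorem cannot be applied to it.

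The paper avoids this obstacle by working with a different graph. Its vertices are the arc-connected components of $(\bigcup\calN_i)\cap J_i$ over all~$i$, and each $X\in\calN^L\cap\calN^R$ contributes a single edge between the component $C^L_X$ containing its left overhang and the component $C^R_X$ containing its right overhang. In the $K_{3,3}$ example above, all six overhangs lie in one component, so the auxiliary graph is locally a star rather than $K_{3,3}$; in general the paper shows this auxiliary graph is planar by an explicit drawing inside $\bigcup\calN$. A $4$-colouring $\phi$ of the component graph is then lifted to $\calN$ by the rule $\psi(X)=\phi(C^L_X)$ when $X\in\calN^L$ and $\psi(X)\neq\phi(C^R_X)$ when $X\in\calN^R$, and one checks directly that intersecting $X\prec Y$ satisfy $C^R_X=C^L_Y$, forcing $\psi(X)\neq\psi(Y)$.
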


\begin{proof}
Define
\begin{itemize}
\item $\calN^L=\{X\in\calN\colon X\setminus\leftclip(X)\neq\emptyset\}$,
\item $\calN^R=\{X\in\calN\colon X\setminus\rightclip(X)\neq\emptyset\}$,
\item $\calN^L_i=\{X\in\calN^L\colon X\setminus\leftclip(X)\subset J_i\}$ for $1\leq i<m$,
\item $\calN^R_i=\{X\in\calN^R\colon X\setminus\rightclip(X)\subset J_i\}$ for $1\leq i<m$,
\item $\calN_i=\calN^L_i\cup\calN^R_i$,
\item $C^L_X$ to be the arc-connected component of $(\bigcup\calN_i)\cap J_i$ containing $X\setminus\leftclip(X)$, for $X\in\calN^L_i$ and $1\leq i<m$,
\item $C^R_X$ to be the arc-connected component of $(\bigcup\calN_i)\cap J_i$ containing $X\setminus\rightclip(X)$, for $X\in\calN^R_i$ and $1\leq i<m$,
\item $\calC$ to be the family of arc-connected components of $(\bigcup\calN_i)\cap J_i$ for $1\leq i<m$, that is, $\calC=\{C^L_X\colon X\in\calN^L\}\cup\{C^R_X\colon X\in\calN^R\}$.
\end{itemize}
For $X\in\calN_i$ and $1\leq i<m$, the set $X\cap J_i$ is arc-connected and thus entirely contained in one member of $\calC$.
It is clear that $C^L_X\neq C^R_X$ for $X\in\calN^L\cap\calN^R$.
Consider the graph $G$ with vertex set $\calC$ and edges connecting $C^L_X$ and $C^R_X$ for all $X\in\calN^L\cap\calN^R$.
We are to show that $G$ is planar.

We construct sets $V\subset E\subset\bigcup\calN$ with the following properties:
\begin{enumerate}
\item $V$ is a finite subset of $\bigcup\calC$,
\item $E$ is a finite union of arcs with endpoints in $V$, pairwise disjoint outside of $V$,
\item $E\cap C$ is arc-connected for every component $C\in\calC$,
\item every $X\in\calN^L\cap\calN^R$ contains an arc in $E$ between $C^L_X$ and $C^R_X$.
\end{enumerate}
The construction proceeds by induction on the members of $\calN$.
We start with $V$ containing one (arbitrary) point in each member of $\calC$, and with $E=V$.
They clearly satisfy (i)--(iii).
Then, for each $X\in\calN^L\cap\calN^R$, we enlarge $V$ and $E$ to satisfy the condition (iv) for $X$, as follows.
Since $C^L_X\cup X\cup C^R_X$ is arc-connected, there is an arc $A\subset C^L_X\cup X\cup C^R_X$ between $E\cap C^L_X$ and $E\cap C^R_X$.
We can furthermore assume that $A\cap E\cap C^L_X=\{v^L\}$ and $A\cap E\cap C^R_X=\{v^R\}$.
This implies that $A\cap E=\{v^L,v^R\}$, as $X\setminus(C^L_X\cup C^R_X)$ is disjoint from any member of $\calN\setminus\{X\}$.
We add $v^L$ and $v^R$ to $V$ and $A$ to $E$.
After processing all $X\in\calN^L\cap\calN^R$, the resulting sets $V$ and $E$ satisfy (i)--(iv).

We have thus obtained a planar representation of a graph $H$ with vertex set $V$ and edge set consisting of maximal arcs in $E$ internally disjoint from $V$.
It follows from (iii) that the subgraph of $H$ induced on $V\cap C$ is connected for every $C\in\calC$.
Consider the graph obtained from $H$ by contracting $V\cap C$ for every $C\in\calC$.
Its vertices represent members of $\calC$, and by (iv), its edges connect the vertices representing $C^L_X$ and $C^R_X$ for all $X\in\calN^L\cap\calN^R$.
Hence this graph is isomorphic to $G$.
This shows that $G$ is planar, as a contraction of a planar graph is planar.

Since $G$ is planar, there is a proper coloring $\phi$ of the vertices of $G$ with four colors $\{1,2,3,4\}$.
Choose a coloring $\psi\colon\calN\to\{1,2,3,4\}$ so that
\begin{itemize}
\item if $X\in\calN^L$, then $\psi(X)=\phi(C^L_X)$,
\item if $X\in\calN^R$, then $\psi(X)\neq\phi(C^R_X)$.
\end{itemize}
Such a coloring exists, because $\phi(C^L_X)\neq\phi(C^R_X)$ for any $X\in\calN^L\cap\calN^R$.
To see that $\psi$ is a proper coloring of $\calN$, consider some $X,Y\in\calN$ such that $X\prec Y$ and $X\cap Y\neq\emptyset$.
Since $\phi^L(X)=\phi^L(Y)$ and $\phi^R(X)=\phi^R(Y)$ we have $\leftclip(X)\cap\leftclip(Y)=\emptyset$ and $\rightclip(X)\cap\rightclip(Y)=\emptyset$. 
Therefore,
\begin{equation*}
(X\setminus\rightclip(X))\cap(Y\setminus\leftclip(Y))=X\cap Y\neq\emptyset.
\end{equation*}
In particular, we have $X\in\calN^R$ and $Y\in\calN^L$.
The set $X\cap Y$ is arc-connected, so $C^R_X=C^L_Y$.
This yields $\psi(X)\neq\phi(C^R_X)=\phi(C^L_Y)=\psi(Y)$.
\end{proof}

\bibliographystyle{plain}
\bibliography{suk}

\end{document}